\newtheorem{theorem}{Theorem}%[section]
\newtheorem{lemma}[theorem]{Lemma}
\newtheorem{corollary}[theorem]{Corollary}
\newtheorem{proposition}[theorem]{Proposition}
\theoremstyle{definition}
\theoremstyle{remark}
\newtheorem{remark}{Remark}
\newtheorem{example}{Example}
\newcommand{\abs}[1]{\lvert#1\rvert}
\newcommand{\nm}[1]{\lVert#1\rVert}
\newcommand{\D}{\mathbb{D}}
\newcommand{\N}{\mathbb{N}}
\newcommand{\NC}{\mathcal{N}}
\newcommand{\R}{\mathbb{R}}
\newcommand{\C}{\mathbb{C}}
\newcommand{\hp}{{\rm Har^+}}
\renewcommand{\phi}{\varphi}
\DeclareMathOperator{\Real}{Re}
\DeclareMathOperator{\Imag}{Im}
\DeclareMathOperator{\Int}{Int}
\DeclareMathOperator{\diam}{diam}
\newcommand{\BMOA}{\rm BMOA}
\renewcommand{\H}{{\rm Hol}}
\begin{document}

% \title[short text for running head]{full title}
\title[Converse growth estimates]{Converse growth estimates for ODEs\\ with slowly growing solutions}
\thanks{The author is supported  in part by the Academy of Finland \#286877.}

\author{Janne Gr\"ohn}
\address{Department of Physics and Mathematics, University of Eastern Finland,\newline
\indent P.O.~Box 111, FI-80101 Joensuu, Finland}
\email{janne.grohn@uef.fi}

\date{\today}

\subjclass[2010]{Primary 34C11, Secondary 34C10}
% 34C11 Ordinary differential equations, Growth, boundedness
% 34C10 Oscillation theory, zeros, disconjugacy and comparison theory
\keywords{Blaschke product, bounded solution, growth of solution, interpolation, linear differential equation, Nevanlinna class,
normal function, oscillation of solution.}

\begin{abstract}
Let $f_1,f_2$ be linearly independent solutions of $f''+Af=0$, where
the coefficient $A$ is an~analytic function in the open unit disc $\D$ of $\C$.
It is shown that many properties of this differential equation can be described
in terms of the subharmonic auxiliary function $u=-\log\, (f_1/f_2)^{\#}$.
For example, the case when $\sup_{z\in\D}  |A(z)|(1-|z|^2)^2 < \infty$
and $f_1/f_2$ is normal, is characterized by the condition $\sup_{z\in\D} |\nabla u(z)|(1-|z|^2) < \infty$.
Different types of Blaschke-oscillatory equations are also described in terms of 
harmonic majorants of $u$. 

Even if $f_1,f_2$ are bounded linearly independent solutions of $f''+Af=0$,
it is possible that $\sup_{z\in\D}  |A(z)|(1-|z|^2)^2 = \infty$ or
$f_1/f_2$ is non-normal. 
These results relate to sharpness discussion of recent results in the literature,
and are succeeded by a~detailed analysis of differential equations with bounded solutions.
Analogues results for the Nevanlinna class are also considered, by taking
advantage of Nevanlinna interpolating sequences.

It is shown that, instead of considering solutions with prescribed zeros, it is possible to
construct a~bounded solution of $f''+Af=0$ in such a way that it solves
an interpolation problem natural to bounded analytic functions, while
$|A(z)|^2(1-|z|^2)^3\, dm(z)$ remains to be a~Carleson measure.
\end{abstract}

\maketitle

%%%%%%%%%%%%%%%%%%%%%%%
%%%% ---- SECTION ---- %%%%
%%%%%%%%%%%%%%%%%%%%%%%

\section{Introduction}
 
Let $\H(\D)$ be the collection of analytic functions in the open unit disc 
$\D$ of the complex plane $\C$.  For $0\leq \alpha < \infty$,
let $L^\infty_\alpha$ denote the space of 
$f:\D \to \C$ for which $\nm{f}_{L^\infty_\alpha} = \sup_{z\in\D} |f(z)|  (1-|z|^2)^{\alpha}<\infty$,
and write $H^\infty_\alpha=L^\infty_\alpha \cap \H(\D)$ and $H^\infty = H^\infty_0$ for short.
We are interested in the relation between the growth of the coefficient $A\in\H(\D)$ and
the oscillation and growth of solutions of
\begin{equation} \label{eq:de2}
  f''+Af=0.
\end{equation}
By \cite[Theorems~3-4]{S:1955}, the following conditions are equivalent:
\begin{itemize}
\item[\rm (i)]
$A\in H^\infty_2$;

\item[\rm (ii)]
zero-sequences of all non-trivial solutions ($f\not\equiv 0$) of \eqref{eq:de2} are separated with
respect to the hyperbolic metric.
\end{itemize}
We refer to \cite{CGHR:2013} for a~far reaching generalization concerning the connection between the growth 
of the coefficient $A\in\H(\D)$
and the minimal separation of zeros of non-trivial solutions of~\eqref{eq:de2}.
It has been unclear whether 
\begin{itemize}
\item[\rm (iii)]
all solutions of \eqref{eq:de2} belong to the Korenblum space $\bigcup_{0< \alpha < \infty} H^\infty_\alpha$,
\end{itemize}
is equivalent to the conditions above. 
Recall that, if $f_1,f_2$ are linearly independent solutions of \eqref{eq:de2} for $A\in\H(\D)$,
then the Wronskian determinant $W(f_1,f_2) = f_1 f_2'-f_1'f_2$ reduces to a~non-zero complex constant,
and consequently, any solution of~\eqref{eq:de2} can be written as a~linear combination of $f_1,f_2$.

In view of results in the literature, the condition (iii) is a~natural candidate for a~description
of the growth of solutions of \eqref{eq:de2} under (i).
Pommerenke used a~classical comparison theorem \cite[Example~1]{P:1982} to prove that
 ${\rm (i)} \Rightarrow  {\rm (iii)}$. This implication has been rediscovered with different methods:
growth estimates \cite[Theorem 4.3(2)]{H:2000}, \cite[Theorem~3.1]{HKR:2007}; successive approximations \cite[Theorem~I]{G:2011};
and straight-forward integration \cite[Theorem~2]{GR:2017}, \cite[Corollary~4(a)]{HKR:2016}. 
We point out that, even if $\nm{A}_{H^\infty_2}>0$
is arbitrarily small, some solutions of \eqref{eq:de2} may be unbounded.
Any coefficient condition $A\in H^\infty_\alpha$ for $0<\alpha<2$ implies boundedness
of all solutions of \eqref{eq:de2} by \cite[Theorem 4.3(1)]{H:2000}. For more involved 
growth estimates in the case of slowly growing solutions, see~\cite{G:2018, GHR:preprint}.

The difficulty in the converse assertion ${\rm (iii)} \Rightarrow  {\rm (i)}$ lies in the fact that
the assumption concerns \emph{all} solutions. The existence of one non-trivial slowly growing solution is not sufficient,
as $f(z)=\exp(-(1+z)/(1-z))$ is a~bounded solution of~\eqref{eq:de2} for $A(z)=-4z/(1-z)^4$, $z\in\D$.
Two classical methods to attack problems of this type are the Bank-Laine approach and arguments based on the Schwarzian derivative.
In the former case, let $E=f_1f_2$ denote the product of two linearly independent solutions
of~\eqref{eq:de2} for $A\in\H(\D)$. By \cite[pp.~76--77]{L:1993}, 
\begin{equation*}
4 A = \left( \frac{E'}{E} \right)^2 - \left( \frac{W(f_1,f_2)}{E} \right)^2 - 2 \, \frac{E''}{E}.
\end{equation*}
The Bank-Laine representation is usually used in conjunction with estimates that appear 
in Wiman-Valiron and Nevanlinna theories.
The latter method is based on \cite[Theorem~6.1]{L:1993}: if $f_1,f_2$ are
linearly independent solutions of~\eqref{eq:de2} for $A\in\H(\D)$, then
$w=f_1/f_2$ is a~locally univalent meromorphic function in $\D$ such that the Schwarzian derivative
\begin{equation*}
S_w = \left( \frac{w''}{w'} \right)' - \frac{1}{2} \left( \frac{w''}{w'} \right)^2
\end{equation*}
is not only analytic in $\D$ but also
satisfies $S_w=2A$. Both approaches represent 
the coefficient function $A$ in terms of the linearly independent solutions $f_1,f_2$,
and are indispensable tools in the case of fast growing solutions (and also in oscillation theory).
However, if all solutions are slowly growing functions in $\D$, then
neither of these techniques seem to be sufficiently delicate to produce sharp growth estimates for 
the coefficient $A$.

%%%%%%%%%%%%%%%%%%%%%%%
%%%% ---- SECTION ---- %%%%
%%%%%%%%%%%%%%%%%%%%%%%

\section{Results}

Many of the following results are \emph{converse growth estimates}
as they measure the growth of the coefficient in terms of solutions.
We begin with studying equations with bounded solutions.
The preliminary results in
Section~\ref{sec:bdd} not only set the stage for forthcoming findings but also
provide a~sharpness discussion for \cite{G:2018,S:2012}.
The significant part of this article is devoted to the study of the subharmonic auxiliary function
$u=-\log\, (f_1/f_2)^{\#}$ where $f_1,f_2$ are linearly independent solutions of~\eqref{eq:de2}.
This approach leads to several new characterizations which are, in essence, based on identities obtained in Section~\ref{sec:id}.
Our intention is to compare properties of $u$ to the coefficient $A$, to the quotient $f_1/f_2$ and
to any non-trivial solution of \eqref{eq:de2}.
Results concerning equations with bounded solutions have natural counterparts
in the setting of the Nevanlinna class, which are considered in Section~\ref{sec:nevint}.
These results depend on recent advances concerning Nevanlinna interpolating sequences.
Finally, in Section~\ref{sec:fixed}, we show that fixed points can be prescribed 
for a~solution of \eqref{eq:de2} in such a~way that all solutions remain bounded.

%%%%%%%%%%%%%%%%%%%%%%%%%%
%%%% ---- SUBSECTION ---- %%%%
%%%%%%%%%%%%%%%%%%%%%%%%%%

\subsection{Bounded solutions} \label{sec:bdd}

The following result indicates that
the implication ${\rm (iii)} \Rightarrow {\rm (i)}$, mentioned in the Introduction, fails to be true.

%%%%%%%%%%%%%%%%%%%%%%%%
%%%% ---- THEOREM ---- %%%%
%%%%%%%%%%%%%%%%%%%%%%%%

\begin{theorem} \label{thm:converse}
Consider the differential equation \eqref{eq:de2} in $\D$.
\begin{enumerate}
\item[\rm (i)]
There exists $A\in \H(\D) \setminus H^\infty_2$
such that all solutions of \eqref{eq:de2} are bounded.

\item[\rm (ii)]
Let $0<p<\infty$.
There exists $A\in \H(\D) \setminus H^\infty_2$
such that all solutions of~\eqref{eq:de2} belong to $H^\infty_p$ while
one of the solutions is non-normal.
\end{enumerate}
\end{theorem}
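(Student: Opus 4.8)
The plan is to construct explicit coefficients $A$ by prescribing the quotient $w = f_1/f_2$ directly, using the Schwarzian correspondence $S_w = 2A$ recalled in the Introduction. The key idea is that if $w$ is a locally univalent meromorphic function in $\D$, then solving $f'' + Af = 0$ with $A = S_w/2$ produces linearly independent solutions $f_1, f_2$ with $f_1/f_2 = w$; moreover, the individual solutions can be recovered as $f_2 = (w')^{-1/2}$ and $f_1 = w \cdot (w')^{-1/2}$ up to normalization (since the Wronskian is constant). Thus boundedness or $H^\infty_p$-membership of the solutions is controlled by the growth of $(w')^{-1/2}$ and $w(w')^{-1/2}$, while the size of $A = S_w/2$ is what we must force to lie outside $H^\infty_2$. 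So the whole problem reduces to: find a locally univalent meromorphic $w$ such that $(w')^{\pm 1/2}$-type quantities are bounded (or in $H^\infty_p$) but $S_w(z)(1-|z|^2)^2$ is unbounded.

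For part (i), I would look for $w$ with a self-similar structure built on a sequence $z_n \to \partial\D$ — for instance, place small "bumps" in $\log w'$ near points $z_n$ whose hyperbolic separation is bounded but where the local oscillation of $w'$ is large enough that the Schwarzian $S_w = (w''/w')' - \tfrac12(w''/w')^2$ has $|S_w(z_n)|(1-|z_n|^2)^2 \to \infty$. The natural mechanism: take $b = w''/w' = (\log w')'$ to be a sum of bump functions of the form $c_n \psi_n$ localized in Whitney boxes around $z_n$, where each $\psi_n$ has derivative of order $(1-|z_n|^2)^{-1}$ times a controllable constant and $\psi_n$ itself is small. Then $(\log w')'$ stays integrable along radii so $\log w'$ — hence $w'$ and $(w')^{\pm 1/2}$ — stays bounded, while the $(w''/w')'$ term in $S_w$ blows up at scale $(1-|z_n|^2)^{-2}$. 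One must be careful to keep $w$ globally locally univalent (so $w' \neq 0$, automatic if $\log w'$ is a genuine analytic function) and to verify $w$ is actually meromorphic, i.e.\ that $f_2 = (w')^{-1/2}$ is well-defined and the recovered $f_1, f_2$ are genuinely bounded; this amounts to checking $\Real(\log w')$ is bounded above and below, which follows if the bumps are summably small in sup norm.

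For part (ii) the additional requirement is that one solution be non-normal, meaning $w = f_1/f_2$ itself (or rather the quotient, whose spherical derivative is $w^{\#} = |w'|/(1+|w|^2)$) has unbounded $w^{\#}(z)(1-|z|^2)$ — equivalently, the auxiliary function $u = -\log w^{\#}$ has unbounded gradient times $(1-|z|^2)$. Here I would arrange the construction so that along a second sequence (or the same one) the modulus $|w|$ makes large excursions: design $w$ to be, say, a highly oscillating Blaschke-product-like object composed with the bounded-$w'$ construction, or directly prescribe both $\log w'$ and the level sets of $|w|$ so that the solutions $w(w')^{-1/2}$ and $(w')^{-1/2}$ remain in $H^\infty_p$ (allowing a mild polynomial blow-up now, which gives more room) while $|w'|/((1+|w|^2)(1-|z|^2)^{-1})$ is unbounded. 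The extra freedom from $H^\infty_p$ rather than $H^\infty$ is what lets us push the non-normality through.

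The main obstacle I anticipate is the simultaneous control problem: the Schwarzian mixes a first-derivative term $(w''/w')'$ and a squared term $-\tfrac12(w''/w')^2$, and making the first term large at $z_n$ while keeping $w''/w'$ small enough that (a) the squared term doesn't dominate in the wrong way and (b) $w'$ stays bounded, requires a delicate choice of bump amplitudes $c_n$ versus widths. The bumps must be steep (to inflate $(w''/w')'$) yet short and low (to keep $w''/w'$ and $\log w'$ controlled), and one must ensure the tails of different bumps don't accumulate. I expect this to be handled by taking $z_n$ hyperbolically far apart (so the Whitney boxes are disjoint and interaction terms are negligible) and choosing amplitudes like $c_n = \e_n \to 0$ slowly while the localization forces $(w''/w')' \sim \e_n (1-|z_n|^2)^{-2}$ only if $\e_n \not\to 0$ — so in fact one keeps $\e_n$ bounded below and instead exploits that a sum of hyperbolically separated bounded bumps still yields a bounded function. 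Verifying these estimates rigorously, together with the meromorphy and local univalence bookkeeping, is the technical heart of the argument.
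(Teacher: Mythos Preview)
Your proposal contains a fundamental obstruction that makes the approach unworkable as stated. You explicitly require that $\Real(\log w')$ be bounded above and below (so that $(w')^{\pm 1/2}$, and hence $f_1,f_2$, are bounded). But this two-sided bound already forces $A\in H^\infty_2$. Write $g=\log w'$; then both $e^g=w'$ and $e^{-g}=1/w'$ lie in $H^\infty$. Cauchy's estimate in a disc of radius $\tfrac12(1-|z|)$ gives $|(e^g)'(z)|(1-|z|)\lesssim 1$ and $|(e^g)''(z)|(1-|z|)^2\lesssim 1$. From the first, $|g'(z)|(1-|z|)=|(e^g)'(z)|\,|e^{-g}(z)|(1-|z|)\lesssim 1$, so $g'\in H^\infty_1$ and in particular $|g'(z)|^2(1-|z|)^2$ is bounded. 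From the second, since $(e^g)''=(g''+(g')^2)e^g$, we get $|g''(z)+g'(z)^2|(1-|z|)^2\lesssim 1$, and combining the two bounds yields $|g''(z)|(1-|z|)^2\lesssim 1$. Therefore $2A=S_w=g''-\tfrac12(g')^2$ satisfies $\sup_{z\in\D}|A(z)|(1-|z|^2)^2<\infty$, which is exactly what you are trying to violate. This is just the Cauchy estimate playing out over two steps: an analytic ``bump'' that is small in sup norm on a Whitney box cannot have large normalized derivative, so your mechanism ``$\psi_n$ small but $\psi_n'$ of order $(1-|z_n|)^{-1}$, hence $b'$ of order $(1-|z_n|)^{-2}$'' cannot produce $|S_w(z_n)|(1-|z_n|)^2\to\infty$.

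The paper avoids this trap by \emph{not} asking for a two-sided bound. It prescribes a single zero-free solution directly,
\[
f_1(z)=\exp\!\Big(i\,\tfrac{p}{2\pi}\big(\log\tfrac{2i}{1-z}\big)^2\Big),\qquad 0<p<\tfrac12,
\]
for which one checks $2^{-p}(1-|z|)^p\le |f_1(z)|\le 1$: crucially $|f_1|$ is bounded above but \emph{not} bounded below by a positive constant. In your language $\Real(\log w')$ is bounded on only one side, and it is precisely this room that lets the extra logarithmic factor in $A=-f_1''/f_1$ push $A$ out of $H^\infty_2$. The second solution $f_2=f_1\int_0^z f_1^{-2}\,d\zeta$ is bounded because $|f_1(\zeta)|^{-2}\lesssim (1-|\zeta|)^{-2p}$ is integrable along the segment when $2p<1$. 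Part~(ii) uses the same idea with $\log\tfrac{1+z}{1-z}$ replacing $\log\tfrac{2i}{1-z}$; the resulting $f_1$ has asymptotic values $0$ and $\infty$ at $z=1$ and is therefore non-normal, while the same integrability argument places $f_1,f_2$ in $H^\infty_p$.
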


The class of normal functions consists of those meromorphic functions for which
$\sup_{z\in\D} \, w^{\#}(z) (1-|z|^2) < \infty$, where $w^\# = |w'|/(1+|w|^2)$ is the spherical derivative.
Function $w$ is normal if and only if
$\{ w \circ \phi : \text{$\phi$ conformal automorphism of $\D$}\}$ is a normal family in~$\D$ in
the sense of Montel \cite{LV:1957}. We consider
the normality of solutions of \eqref{eq:de2} as well as the normality
of the quotient of two linearly independent solutions.
If $A\in H^\infty_2$, then normal solutions of \eqref{eq:de2} are described
by \cite[Proposition~7]{GNR:2018}, and the case when the quotient is normal will
be characterized in Section~\ref{sec:pw}.
Note that the coefficient condition $A\in H^\infty_2$
allows non-normal solutions by \cite[Theorem~3]{G:2017_1} and \cite[Theorem~1]{G:2017}; and
even the normality of all solutions 
is not sufficient for $A\in H^\infty_2$ by Theorem~\ref{thm:converse}(i) above.

If $f_1, f_2 \in H^\infty$ are linearly independent solutions
of \eqref{eq:de2} for $A\in\H(\D)$, then $A\in H^\infty_3$ by Steinmetz's result \cite[p.~130]{S:2012}. 
Theorem~\ref{thm:converse}(i) shows that this result cannot be improved to $A\in H^\infty_2$.
The intermediate conclusion $A\in H^\infty_{\alpha}$ for $\alpha=5/2$ has been obtained in \cite[Theorem~6]{G:2018} under 
the weaker assumption $f_1,f_2\in \mathcal{B}$,
while the question of finding the best possible $\alpha$ remains open.
Here $\mathcal{B}$ is the Bloch space,
which contains $f\in\H(\D)$ for which $\nm{f}_{\mathcal{B}} = \nm{f'}_{H^\infty_1}<\infty$. 
The desired conclusion $A\in H^\infty_2$ has been obtained in \cite[Theorem~7]{G:2018} 
under the additional assumption $\inf_{z\in\D}( |f_1(z)| + |f_2(z)| )>0$.
We proceed to state two generalizations in this respect.
Theorem \ref{thm:converse_final} in Section~\ref{sec:ppp} shows that it is not necessary to take the
infimum over the whole unit disc while Theorem~\ref{thm:converse_finalx} below implies
that we may take the infimum of a~function which is significantly larger than $|f_1|+|f_2|$.
The latter generalization is based on having specific information about the structure of the
ideal $I_{H^\infty}(f_1,f_2)$ generated by the solutions $f_1,f_2 \in H^\infty$.

A~positive Borel measure $\mu$ on~$\D$ is called a~Carleson measure, if for fixed $0<p<\infty$
there exists $C=C(p)$ with $0<C<\infty$ such that
\begin{equation*}
\int_{\D} |f(z)|^p \, d\mu(z) \leq C \, \lim_{r\to 1^-} \frac{1}{2\pi} \int_0^{2\pi} |f(re^{i\theta})|^p \, d\theta = C \, \nm{f}_{H^p}^p,
\quad f\in\H(\D).
\end{equation*}
Here $H^p$ is the standard Hardy space.
By \cite[Lemma~3.3, p.~231]{G:2007}, such measures~$\mu$ are characterized by
$\sup_{a\in\D}  \int_\D |\varphi_a'(z)| \, d\mu(z) < \infty$,
where $\varphi_a(z)=(\zeta-z)/(1-\overline{a}z)$ is a~conformal
automorphism of $\D$ which coincides with its own inverse.
Since $|A|^2$ is subharmonic for $A\in\H(\D)$, 
we deduce $A\in H^\infty_2$ whenever $|A(z)|^2(1-|z|^2)^3\, dm(z)$ is a~Carleson measure.
This Carleson measure condition appears several times in the literature:
in connection to solutions of \eqref{eq:de2} with uniformly separated zeros \cite{G:2017, GN:2017}
and in relation to solutions in Hardy spaces~\cite{GHR:preprint, GNR:2018}.

%%%%%%%%%%%%%%%%%%%%%%%%
%%%% ---- THEOREM ---- %%%%
%%%%%%%%%%%%%%%%%%%%%%%%

\begin{theorem} \label{thm:converse_finalx}
If $f_1,f_2\in H^\infty$ are linearly independent solutions of \eqref{eq:de2} for $A\in\H(\D)$ such that
\begin{equation} \label{eq:tolo}
  \inf_{a\in\D} \, \sum_{k=0}^n \left( \big| (f_1 \circ \varphi_a)^{(k)}(0)\big| 
    + \big| (f_2 \circ \varphi_a)^{(k)}(0)\big| \right) > 0
\end{equation}
for some $n\in\N \cup \{0\}$, then $|A(z)|^2(1-|z|^2)^3\, dm(z)$ is a~Carleson measure.
\end{theorem}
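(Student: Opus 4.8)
The plan is to convert the interpolation-type hypothesis \eqref{eq:tolo} into a pointwise lower bound on a conformally invariant quantity attached to the pair $(f_1,f_2)$, and then feed this into the Bank--Laine representation of $A$ to obtain the Carleson measure condition. Set $E=f_1f_2$ and recall $W=W(f_1,f_2)$ is a non-zero constant, so that
\begin{equation*}
4A = \left(\frac{E'}{E}\right)^2 - \left(\frac{W}{E}\right)^2 - 2\,\frac{E''}{E}.
\end{equation*}
The obstruction to estimating $A$ from $f_1,f_2\in H^\infty$ alone is that $E$ can be small; the role of \eqref{eq:tolo} is precisely to control how small. First I would fix $a\in\D$, write $g_j=f_j\circ\varphi_a$ for $j=1,2$, and observe that $g_1,g_2$ are linearly independent solutions of an equation $g''+B_ag=0$ with $B_a=(A\circ\varphi_a)(\varphi_a')^2$, and that $\nm{g_j}_{H^\infty}=\nm{f_j}_{H^\infty}$. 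Thus it suffices to bound $|B_a(0)|$ uniformly in $a$, and then the conformally invariant reformulation of the Carleson condition (via $\sup_{a}\int_\D|\varphi_a'|\,d\mu$, or equivalently the one-box/invariant Carleson characterization together with subharmonicity of $|A|^2$) will finish the argument.

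The heart of the matter is the following local claim: if $h$ solves $h''+Bh=0$ on $\D$ with $\nm{h}_{H^\infty}\le M$ (this applies to $g_1,g_2$ with $M=\max_j\nm{f_j}_{H^\infty}$) and if
\begin{equation*}
\sum_{k=0}^n\left(|g_1^{(k)}(0)|+|g_2^{(k)}(0)|\right)\ge \delta>0,
\end{equation*}
then $|B(0)|\le C(n,M,\delta)$. To see this I would argue on a fixed disc, say $|z|\le 1/2$. Cauchy estimates bound all derivatives $g_j^{(k)}(0)$ for $k\le n+2$ in terms of $M$; using the ODE, the values $B(0),B'(0),\dots$ at $0$ are determined by the Taylor coefficients of $g_1$ (or $g_2$) at $0$ up to a point where the leading coefficient $g_j(0)$ (or a higher derivative) appears as a nonvanishing denominator. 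Concretely, from $g_j''=-Bg_j$ one reads off, by differentiating repeatedly and evaluating at $0$, a triangular system expressing $B(0)$ as a rational function of $\{g_j^{(k)}(0)\}_{k\le n+2}$ whose denominator is a polynomial in these same quantities that is bounded away from $0$ exactly on the set where \eqref{eq:tolo} holds --- here one uses that $g_1,g_2$ cannot share a common zero of order $\ge n$ at $0$ together with all lower derivatives vanishing, since their Wronskian is a nonzero constant. Compactness (the set of normalized admissible data is compact and the denominator is continuous and nonvanishing on it) then yields the bound $C(n,M,\delta)$ without an explicit formula.

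I expect the delicate step to be making the last paragraph uniform and clean: one must verify that \eqref{eq:tolo} really does prevent the relevant denominator from vanishing, i.e. that the constancy of the Wronskian forces enough of the jet $(g_1(0),\dots,g_1^{(n)}(0),g_2(0),\dots,g_2^{(n)}(0))$ to be "nondegenerate" relative to $B(0)$. The clean way is a normal-families / compactness argument: suppose not, then there are $a_m\in\D$ with $|B_{a_m}(0)|\to\infty$ while the admissible normalized data stay bounded below by $\delta$ and above by the Cauchy bound; passing to a subsequence, $g_1^{(m)},g_2^{(m)}$ converge locally uniformly (by a Montel-type argument, using the uniform $H^\infty$ bound) to limit solutions of a limiting equation with finite coefficient, contradicting $|B_{a_m}(0)|\to\infty$ --- one must check the limit solutions are not both identically zero, which is where \eqref{eq:tolo} is used. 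Once $\sup_{a}|B_a(0)|<\infty$ is established, the passage to the Carleson measure statement is the routine invariant reformulation already recalled in the excerpt, combined with the subharmonicity of $|A|^2$ to upgrade the pointwise bound on $|A(z)|(1-|z|^2)^2$ that one gets from $\sup_a|B_a(0)|<\infty$ into control of the averaged quantity $|A(z)|^2(1-|z|^2)^3$, hence the Carleson property.
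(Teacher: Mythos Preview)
There are two genuine gaps in your proposal.

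First, the transformation law is wrong. If $g_j=f_j\circ\varphi_a$ then
\[
g_j''+B_ag_j=(f_j'\circ\varphi_a)\,\varphi_a'',
\]
which does not vanish for $a\neq 0$; the functions $g_j$ are \emph{not} solutions of $g''+B_ag=0$. The correct conformal covariance requires the factor $(\varphi_a')^{-1/2}$, i.e.\ $h_j=(f_j\circ\varphi_a)(\varphi_a')^{-1/2}$ solve $h''+B_ah=0$. But $|(\varphi_a')^{-1/2}|\asymp(1-|a|^2)^{-1/2}$ on $\{|z|\le 1/2\}$, so the $h_j$ are not uniformly bounded as $|a|\to1^-$ and the Montel compactness step collapses. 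This is fixable in principle (renormalize), but the proposal as written does not do so.

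Second, and more seriously, the final ``upgrade'' step is simply false. The implication goes the other way: subharmonicity of $|A|^2$ shows that if $|A(z)|^2(1-|z|^2)^3\,dm(z)$ is Carleson then $A\in H^\infty_2$, not conversely. There exist $A\in H^\infty_2$ for which this measure is not Carleson, so even a correct proof of $\sup_a|A(a)|(1-|a|^2)^2<\infty$ would not finish the theorem. Your proposal contains no mechanism that would produce the genuinely stronger Carleson bound.

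The paper's argument is entirely different and does not pass through a pointwise $H^\infty_2$ bound first. Condition~\eqref{eq:tolo} is precisely Tolokonnikov's criterion for the ideal $I_{H^\infty}(f_1,f_2)$ to contain a Blaschke product $B$ whose zeros form a finite union of uniformly separated sequences. This yields a B\'ezout identity $f_1g_1+f_2g_2=B$ with $g_1,g_2\in H^\infty$; differentiating twice and using the ODE gives the explicit representation
\[
A=\frac{2(f_1'g_1'+f_2'g_2')+f_1g_1''+f_2g_2''-B''}{B}.
\]
The numerator, being built from products of $H^\infty$ functions and their derivatives, already gives a Carleson-type bound, and the denominator $B$ is bounded below away from a uniformly separated family of pseudo-hyperbolic discs, on which one uses $A\in H^\infty_2$ (obtained along the way) together with the Carleson property of $\sum(1-|z_n|)\delta_{z_n}$. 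The key input you are missing is this structural/ideal-theoretic result, which is what converts the jet condition~\eqref{eq:tolo} into something usable.
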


Let $f_1,f_2\in H^\infty$ be linearly independent solutions of \eqref{eq:de2} for $A\in\H(\D)$.
In~\cite{S:2012}, Steinmetz proved $(f_1/f_2)^\#\in L^\infty_2$ and asked whether this
can be improved to $(f_1/f_2)^\#\in L^\infty_1$? It turns out that
Steinmetz's result is best possible up to a~multiplicative constant.
Recall that the sequence $\{z_n\}\subset \D$ is said to be uniformly separated, if it is separated in the hyperbolic metric
and $\sum_n (1-|z_n|)\delta_{z_n}$ is a~Carleson measure. Here $\delta_{z_n}$ is the Dirac measure with point mass at $z_n\in\D$.

%%%%%%%%%%%%%%%%%%%%%%%%
%%%% ---- THEOREM ---- %%%%
%%%%%%%%%%%%%%%%%%%%%%%%

\begin{theorem} \label{thm:steinmetz}
Let $\Lambda\subset\D$ be uniformly separated.
Then, there exists $A\in \H(\D)$ such that $|A(z)|^2(1-|z|^2)^3\, dm(z)$ is a~Carleson measure
and \eqref{eq:de2} admits two linearly independent solutions $f_1,f_2\in H^\infty$ such that
$\inf_{z_n\in\Lambda} \, (f_1/f_2)^\#(z_n) (1-|z_n|^2)^2 >0$.
\end{theorem}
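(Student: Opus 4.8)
The plan is to construct $A$ directly from the data, building the two bounded solutions first and then reading off the coefficient via the Schwarzian. Given the uniformly separated sequence $\Lambda = \{z_n\}$, let $B$ be the Blaschke product with zero set $\Lambda$; since $\Lambda$ is uniformly separated, $B$ is an interpolating Blaschke product, and $|B'(z_n)|(1-|z_n|^2) \asymp 1$ along $\Lambda$ with a uniform lower bound. The idea is to take $w = f_1/f_2$ to be a locally univalent meromorphic function whose spherical derivative is comparable to $(1-|z|^2)^{-1}$ precisely at the points of $\Lambda$ and is otherwise well-behaved; a natural first attempt is to let $w = B$ itself, or a suitably normalized primitive-type modification. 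Then $A = S_w/2$ by the identity recalled in the Introduction, and the task reduces to (a) showing $f_1, f_2 \in H^\infty$, (b) verifying that $|A(z)|^2(1-|z|^2)^3\,dm(z)$ is a Carleson measure, and (c) checking the lower bound $\inf_{z_n\in\Lambda}(f_1/f_2)^\#(z_n)(1-|z_n|^2)^2 > 0$.

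For step (c), note that once $w = f_1/f_2$ is chosen with $w^\#(z_n)(1-|z_n|^2) \gtrsim 1$, the claim $(f_1/f_2)^\#(z_n)(1-|z_n|^2)^2 \gtrsim 1$ is \emph{weaker} (there is an extra factor $1-|z_n|^2 \le 1$), so this will follow as soon as $w^\# \in L^\infty_1$ near $\Lambda$ from below — which is automatic for an interpolating Blaschke product since $B^\#(z_n)(1-|z_n|^2) = |B'(z_n)|(1-|z_n|^2) \asymp 1$. For step (a), recovering $f_1, f_2$ from $w$: writing $f_2 = (w')^{-1/2}$ and $f_1 = w\,(w')^{-1/2}$ gives linearly independent solutions of $f'' + Af = 0$ with $A = S_w/2$ (after normalizing the Wronskian), so boundedness of $f_1, f_2$ amounts to controlling $(w')^{-1/2}$ and $w(w')^{-1/2}$; for $w = B$ one has $|B| \le 1$, so it suffices to bound $|B'(z)|^{-1/2}$, and here one must be careful — $B'$ vanishes nowhere on $\D$ only if the zeros are simple and suitably spread, but $|B'|$ can still be small between zeros. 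This is where the construction may need to be adjusted: rather than $B$ itself, one takes $w$ to be an atomic-type singular inner function composed appropriately, or a lacunary-type series, engineered so that $\log(1/w^\#)$ has a harmonic majorant and $(w')^{\pm 1/2}$ stays bounded.

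For step (b), the Carleson measure condition, I would use the Schwarzian bound: $\|S_w(z)\|(1-|z|^2)^2$ is controlled by the Bloch-type norm of $\log w'$, and more precisely $|S_w|^2(1-|z|^2)^3\,dm$ being Carleson is equivalent (up to the standard subharmonicity trick quoted in the excerpt before Theorem~\ref{thm:converse_finalx}) to $|S_w(z)|(1-|z|^2)^2$ lying in an appropriate tent/BMO-type space. For an interpolating Blaschke product, $\log B'$ is known to be in BMOA (indeed $B' / B$ has a pole structure governed by $\Lambda$, and the relevant sums $\sum_n (1-|z_n|^2)/|1-\bar z_n z|^2$ are Carleson by the uniform separation hypothesis), and differentiating once more keeps the Carleson property by a standard Littlewood–Paley estimate. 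Thus the uniform separation of $\Lambda$ feeds directly into the Carleson measure bound for $A$.

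The main obstacle I expect is step (a): ensuring $f_1 = w\,(w')^{-1/2}$ and $f_2 = (w')^{-1/2}$ are genuinely bounded, which forces tight control on how small $w'$ is allowed to be — an interpolating Blaschke product alone does not obviously suffice because $|B'|$ is not bounded below. The resolution is to choose $w$ more cleverly: take $w$ of the form $B \cdot g$ or a conformal-type reparametrization so that $w' = (1/f_2)^2$ with $f_2$ a prescribed bounded outer (or inner-times-outer) function having $\inf(|f_1|+|f_2|)$ controlled on $\Lambda$; then boundedness of $f_1, f_2$ is built in by fiat, $A = S_w/2$ is computed, and one checks the Carleson condition using that $f_1, f_2$ and their logarithmic derivatives interact with $\Lambda$ exactly as the interpolating structure demands. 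In effect, this reverses the logic of Theorem~\ref{thm:converse_finalx}: instead of deducing the Carleson condition from boundedness plus \eqref{eq:tolo}, we prescribe bounded $f_1, f_2$ realizing a favorable lower bound on $\Lambda$ and verify the Carleson condition survives.
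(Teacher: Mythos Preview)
Your proposal has a genuine conceptual gap in step~(c) that undermines the whole strategy. You write that the claim $(f_1/f_2)^\#(z_n)(1-|z_n|^2)^2 \gtrsim 1$ is \emph{weaker} than $w^\#(z_n)(1-|z_n|^2)\gtrsim 1$, but this is backwards: if $w^\#(z_n)(1-|z_n|^2)\asymp 1$ then $w^\#(z_n)(1-|z_n|^2)^2\asymp (1-|z_n|^2)\to 0$, so the desired lower bound \emph{fails}. The whole point of the theorem is to produce a quotient $w=f_1/f_2$ whose spherical derivative is of order $(1-|z|^2)^{-2}$ along $\Lambda$ --- that is, $w$ must be non-normal along $\Lambda$. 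An interpolating Blaschke product has $B^\#(z_n)(1-|z_n|^2)=|B'(z_n)|(1-|z_n|^2)\asymp 1$, so taking $w=B$ gives exactly the wrong size.

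The construction in step~(a) is also not carried out: you identify that $(B')^{-1/2}$ is not bounded (indeed, $B'$ has zeros in $\D$ for any Blaschke product with at least two zeros, so $(w')^{-1/2}$ is not even single-valued), and then gesture toward ``$w=B\cdot g$'' or ``prescribe $f_2$ bounded outer'' without specifying any concrete choice or verifying the spherical-derivative and Carleson conditions for it. This is the heart of the proof and it is missing.

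The paper avoids the Schwarzian route entirely. It builds $f_1$ directly as $f_1=Be^{Bk}$, where $k\in H^\infty$ interpolates $k(z_n)=-B''(z_n)/(2B'(z_n)^2)$; this interpolation forces $f_1''(z_n)=0$ so that $A=-f_1''/f_1$ is analytic. The Carleson bound for $A$ then follows from Lemma~\ref{lemma:exc_set} and Theorem~\ref{thm:converse_final}, since $|f_1|\asymp|B|$ is bounded below away from small pseudo-hyperbolic discs around $\Lambda$. The second solution $f_2$ is obtained by the integral $f_2(z)=f_1(z)\int_\alpha^z f_1(\zeta)^{-2}\,d\zeta$ along paths avoiding those discs, and is bounded because $|B|$ is bounded below on the path. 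Finally, since $W(f_1,f_2)=1$ and $f_1(z_n)=0$, one computes directly
\[
(f_1/f_2)^\#(z_n)(1-|z_n|^2)^2 = |f_1'(z_n)|^2(1-|z_n|^2)^2 = |B'(z_n)|^2(1-|z_n|^2)^2 \geq \delta^2>0.
\]
Note the square: the spherical derivative at $z_n$ is $|B'(z_n)|^2\asymp(1-|z_n|^2)^{-2}$, not $(1-|z_n|^2)^{-1}$, which is exactly what the theorem is designed to exhibit.
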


Instead of considering prescribed zeros 
of solutions --- which is the approach in Theorem~\ref{thm:steinmetz}, among many other results --- 
we may also consider solutions which satisfy an~interpolation problem natural for bounded analytic functions.
Such result has been the objective of recent research. Our solution to this problem is based on
combining classical interpolation results by Earl and \O yma.

%%%%%%%%%%%%%%%%%%%%%%%%
%%%% ---- THEOREM ---- %%%%
%%%%%%%%%%%%%%%%%%%%%%%%

\begin{theorem} \label{thm:intp}
Let $\{z_n\}\subset\D$ be uniformly separated and $\{w_n\}\subset\C$ bounded.
Then, there exists $A\in \H(\D)$ such that $|A(z)|^2(1-|z|^2)^3\, dm(z)$ is a~Carleson measure,
\eqref{eq:de2} admits a~solution $f\in H^\infty$ which satisfies $f(z_n)=w_n$ for all~$n$,
while all solutions of \eqref{eq:de2} are bounded.
\end{theorem}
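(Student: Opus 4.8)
The plan is to build the solution $f\in H^\infty$ first, as an interpolating Blaschke-type product adjusted to hit the prescribed values, and only then define $A$ by a Riccati-type substitution, afterwards checking that the companion solutions stay bounded. More precisely, I would start from the uniformly separated sequence $\{z_n\}$ and the bounded target $\{w_n\}$, and invoke the interpolation theorems of Earl and \O yma to produce a function $g\in H^\infty$ with $g(z_n)=w_n$ for all $n$ having a very rigid structure: Earl's theorem gives an interpolant of the form $g=\lambda\,B\circ\psi$ (a bounded multiple of a Blaschke product with zeros close to a shifted sequence), so that $g$, $g'$ and $g''$ are all controlled by Carleson-measure data attached to $\{z_n\}$. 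The point of combining Earl with \O yma is to get simultaneously (a) the interpolation property, (b) a Blaschke product whose zero sequence is again uniformly separated, and (c) quantitative bounds on $(1-|z|^2)|g'(z)|$ and $(1-|z|^2)^2|g''(z)|$ of the type produced by a single interpolating Blaschke factor. Writing $f = c\,e^{h}$ or, more naturally, taking $f_2$ to be a fixed zero-free bounded solution (to be constructed) and $f = f_1$ a bounded solution with the interpolation property, one realizes $f_1/f_2 = w$ where $w$ is, up to normalization, the Earl--\O yma interpolant; then $A = \tfrac12 S_w$ by the Schwarzian identity recalled in the Introduction.

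The second step is the verification that $|A(z)|^2(1-|z|^2)^3\,dm(z)$ is a Carleson measure. Here I would use exactly the mechanism behind Theorems~\ref{thm:steinmetz} and~\ref{thm:intp}'s predecessors: if $w$ is built from an interpolating Blaschke product $B$ with uniformly separated zeros, then $S_w = S_B$ up to the correction coming from the M\"obius normalization (which contributes nothing, since M\"obius maps have vanishing Schwarzian), and one has the classical pointwise bound $|S_B(z)|(1-|z|^2)^2 \lesssim \sum_n \frac{(1-|z_n|^2)}{|1-\overline{z_n}z|^2}\cdot(\text{bounded factor})$, whence $|S_B(z)|^2(1-|z|^2)^3\,dm(z)$ is dominated by the square of a Carleson-measure density against $\sum_n(1-|z_n|)\delta_{z_n}$, which is Carleson by uniform separation. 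Combined with subharmonicity of $|A|^2$ this also re-proves $A\in H^\infty_2$ en route, but that is a by-product.

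The third and genuinely delicate step is showing that \emph{all} solutions of $f''+Af=0$ are bounded, not merely the one interpolating solution $f_1$. Since $f_1/f_2 = w$ and the Wronskian is constant, one has the standard representation $f_2 = c/f_1' \cdot(\ldots)$; more usefully, fixing $f_1$ one recovers $f_2$ by reduction of order, $f_2 = f_1 \int^z f_1^{-2}$, and a general solution is $\alpha f_1 + \beta f_2$. Boundedness of $f_1$ is arranged by construction; the issue is $f_2$, equivalently the boundedness of $f_1(z)\int_{0}^{z} f_1(\zeta)^{-2}\,d\zeta$. This is where I expect the main obstacle: one must choose the interpolant $g=f_1/f_2$ (and hence $f_1$) so that $f_1$ not only is bounded but is \emph{bounded away from zero in a controlled, Carleson-type way}, so that the reduction-of-order integral converges to a bounded function. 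The natural device is to force $\inf_{z\in\D}\big(|f_1(z)|+|f_2(z)|\big)>0$ — i.e.\ to land in the situation of \cite[Theorem~7]{G:2018} — by building $f_1$, $f_2$ as a \emph{corona pair} of bounded solutions: one picks $f_2$ to be an invertible outer-type bounded solution of an auxiliary equation and perturbs, or equivalently one uses the freedom in Earl's construction (the Blaschke zeros of $g$ may be placed on the prescribed uniformly separated set, and their multiplicities/positions tuned) to guarantee that $f_1$ and $f_2$ have no common zeros and satisfy a Carleson corona condition on $\D$. Once $\inf_{z\in\D}(|f_1(z)|+|f_2(z)|)>0$ is secured together with $f_1,f_2\in H^\infty$, boundedness of every linear combination is immediate, and the Carleson-measure estimate for $A$ follows from Theorem~\ref{thm:converse_finalx} (the case $n=0$) as an independent confirmation. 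So the skeleton is: Earl--\O yma to get the interpolant with prescribed values on $\{z_n\}$ $\Rightarrow$ define $A=\tfrac12 S_w$ $\Rightarrow$ arrange a corona pair of bounded solutions so that $f_1$ carries the interpolation data $\Rightarrow$ conclude all solutions bounded and the Carleson condition on $A$; the hard part is the corona/lower-bound bookkeeping in the third arrow.
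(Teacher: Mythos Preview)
Your plan has two genuine gaps that prevent it from going through as written.

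First, the Schwarzian route is not compatible with the interpolation requirement. If $w$ is the Earl interpolant --- a constant multiple of a Blaschke product $I$ --- then $w'$ vanishes in $\D$ (any finite Blaschke product of degree $\geq 2$ has critical points, and an infinite interpolating Blaschke product has infinitely many), so $S_w$ is not analytic and $A=\tfrac12 S_w$ cannot serve as the coefficient. Even if you perturbed $w$ to be locally univalent, the solutions recovered from the Schwarzian are $f_2=(w')^{-1/2}$ and $f_1=w(w')^{-1/2}$, so $f_1(z_n)=w_n/\sqrt{w'(z_n)}$, not $w_n$; you do not get to prescribe $f_2$ to be ``a fixed zero-free bounded solution with $f_2(z_n)=1$'' independently of $w$. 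In the paper the Earl interpolant $CI$ is used directly, not as a quotient: one sets $f=CI\,e^{Bg}$, so that $f(z_n)=CI(z_n)=w_n$ because $B(z_n)=0$. The role of \O yma is then entirely different from what you describe: it is a \emph{two-level} interpolation (values \emph{and} derivatives) at the zeros $\{\zeta_n\}$ of $I$, designed so that $f''(\zeta_n)=0$ and hence $A=-f''/f$ extends analytically across those zeros.

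Second, aiming for the corona lower bound $\inf_{z\in\D}(|f_1(z)|+|f_2(z)|)>0$ is the wrong target and is in general unattainable here: Example~\ref{ex:cases}(ii) shows that for the very construction underlying Theorem~\ref{thm:steinmetz} one has $|f_1(z_n)|+|f_2(z_n)|\asymp 1-|z_n|^2$. What actually works is the weaker condition~\eqref{eq:exc_ind}: $|f|$ is bounded below outside a union of pairwise disjoint pseudo-hyperbolic discs around the (uniformly separated) zeros of $I$. With that in hand, the reduction-of-order integral $f_2(z)=f(z)\int_\alpha^z f^{-2}$ can be evaluated along paths that stay in the good set except near the endpoint, giving $f_2\in L^\infty$ on $\D\setminus\Omega$, and then Lemma~\ref{lemma:exc_set} upgrades this to $f_2\in H^\infty$. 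The Carleson-measure estimate for $A$ likewise comes from Theorem~\ref{thm:converse_final} via~\eqref{eq:exc_ind}, not from a global corona bound.
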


In Section~\ref{sec:steinmetz} we consider oscillation of solutions of
such differential equations whose solutions are bounded, and concentrate on the zeros and critical points.

%%%%%%%%%%%%%%%%%%%%%%%%%%
%%%% ---- SUBSECTION ---- %%%%
%%%%%%%%%%%%%%%%%%%%%%%%%%

\subsection{Identities} \label{sec:id}
We take a~short side-track to consider properties of the differential
equation~\eqref{eq:de2} assuming that the coefficient $A$ is merely analytic in $\D$.
Suppose for a~moment that $f$ is a~zero-free solution of \eqref{eq:de2}. 
In this case $\log f \in \H(\D)$ and 
\begin{equation} \label{eq:orig_rep}
A = - f''/f =  - ( \log f)'' - \big( (\log f)' \big)^2.
\end{equation}
Our next objective is to obtain a~similar representation which
takes account on both linearly independent solutions 
and allows them to have zeros in $\D$. Let
\begin{equation*}
\partial f = \frac{1}{2} \left( \frac{\partial f}{\partial x} - i \, \frac{\partial f}{\partial y} \right),
\quad
\overline{\partial} f = \frac{1}{2} \left( \frac{\partial f}{\partial x} + i \, \frac{\partial f}{\partial y} \right),
\end{equation*}
denote the complex partial derivatives of $f$.
Note that $\partial f$ and $\overline{\partial} f$ exist as long as $\partial f / \partial x$ and $\partial f / \partial y$
exist, and then the gradient $\nabla f = (\partial f / \partial x, \partial f / \partial y)$
satisfies $|\nabla f|^2 = 2 ( |\partial f|^2 + |\overline{\partial} f|^2 )$.
If $f$ has continuous second-order derivatives (denoted by $f\in C^2$),
then the Laplacian $\Delta f$ can be written in the form
$\Delta f = 4 \, \overline{\partial} \partial f =  4 \, \partial \overline{\partial} f$. 

We have been unable to find a~reference for the following result.

%%%%%%%%%%%%%%%%%%%%%%%%
%%%% ---- THEOREM ---- %%%%
%%%%%%%%%%%%%%%%%%%%%%%%

\begin{theorem} \label{thm:repres}
Let $f_1,f_2$ be linearly independent solutions of \eqref{eq:de2} for \mbox{$A\in\H(\D)$},
and define $u = -\log \, (f_1/f_2)^{\#}$. Then,
\begin{enumerate}
\item[\rm (i)]
$\Delta u = 4 \, e^{-2u}$;

\item[\rm (ii)]
$\Delta u + | \nabla u |^2 =  e^{-u} \Delta e^u$;

\item[\rm (iii)]
$A = - \partial^2 u - (\partial u)^2$.
\end{enumerate}
\end{theorem}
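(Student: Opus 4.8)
The plan is to work throughout with the meromorphic function $w = f_1/f_2$, whose Schwarzian satisfies $S_w = 2A$ by \cite[Theorem~6.1]{L:1993}, and to compute everything in terms of $w$ and $w'$. Set $v = (f_1/f_2)^\# = |w'|/(1+|w|^2)$, so $u = -\log v$. The first observation is that $\log(1+|w|^2)$ is subharmonic with a clean Laplacian: a standard computation gives $\Delta \log(1+|w|^2) = 4|w'|^2/(1+|w|^2)^2 = 4v^2$. Since $w$ is locally univalent (the Wronskian is a nonzero constant, so $w' \neq 0$), $\log|w'|$ is locally harmonic, hence $\Delta u = -\Delta\log|w'| + \Delta\log(1+|w|^2) = 4v^2 = 4e^{-2u}$. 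This proves (i). One must be slightly careful at the zeros and poles of $w$ (i.e. zeros of $f_2$ and $f_1$), but there $w'$ stays finite and nonzero and $u$ extends smoothly, so the identity holds on all of $\D$; I would remark this briefly rather than belabor it.

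For part (ii), I would just invoke the general identity $\Delta e^u = e^u(\Delta u + |\nabla u|^2)$, which is the chain rule for the Laplacian applied to the composition $e^{(\cdot)}$ with $u$: expanding $\Delta e^u = 4\overline\partial\partial e^u = 4\overline\partial(e^u \partial u) = 4 e^u(\partial u\,\overline\partial u) + 4 e^u \overline\partial\partial u = e^u|\nabla u|^2 + e^u \Delta u$, using $|\nabla u|^2 = 2(|\partial u|^2 + |\overline\partial u|^2) = 4|\partial u|^2$ since $u$ is real-valued so $\overline\partial u = \overline{\partial u}$. Multiplying by $e^{-u}$ gives (ii). This part is essentially formal and requires no special structure of $u$ beyond $u \in C^2$, which follows from (i).

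Part (iii) is where the real content lies, and it is the step I expect to be the main obstacle. The idea is that $e^u = 1/v = (1+|w|^2)/|w'|$, and I want to extract the holomorphic object $A$ from this non-holomorphic expression by applying $\partial$. Write $u = -\log|w'| + \log(1+|w|^2) = -\tfrac12\log w' - \tfrac12\overline{\log w'} + \log(1 + w\bar w)$, where $\log w'$ is any local holomorphic branch (legitimate since $w'\neq 0$). Then $\partial u = -\tfrac12 \, w''/w' + \bar w\, w'/(1+|w|^2)$. Differentiating again with $\partial$: the first term contributes $-\tfrac12\big((w''/w')' \big) = -\tfrac12(w''/w')'$ holomorphically, and the second term, $\partial\big(\bar w w'/(1+w\bar w)\big)$, requires the product and quotient rules treating $\bar w$ as constant under $\partial$, yielding $\bar w w''/(1+|w|^2) - \bar w^2 (w')^2/(1+|w|^2)^2$. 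Meanwhile $(\partial u)^2 = \tfrac14(w''/w')^2 - \tfrac{\bar w w'' }{1+|w|^2} \cdot \tfrac{w''/w'}{\,} \cdot$ — more precisely $(\partial u)^2 = \tfrac14(w''/w')^2 - (w''/w') \cdot \tfrac{\bar w w'}{1+|w|^2} + \tfrac{\bar w^2(w')^2}{(1+|w|^2)^2}$. Adding $\partial^2 u + (\partial u)^2$, the two $\bar w^2(w')^2/(1+|w|^2)^2$ terms cancel, and the terms involving $\bar w w''$ must also cancel against $-(w''/w')\cdot \bar w w'/(1+|w|^2) = -\bar w w''/(1+|w|^2)$; checking that cancellation carefully is the delicate bookkeeping. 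What survives is $-\tfrac12(w''/w')' + \tfrac14(w''/w')^2 = -\tfrac12\big[(w''/w')' - \tfrac12(w''/w')^2\big] = -\tfrac12 S_w = -A$. Hence $A = -\partial^2 u - (\partial u)^2$, which is (iii). The whole argument is a calculation, so in the paper I would present the key intermediate expressions for $\partial u$ and $\partial^2 u$ and let the reader verify the cancellations, emphasizing that the mechanism is: the $|w'|$-part of $u$ produces the Schwarzian via \eqref{eq:orig_rep}-style logic, and the $(1+|w|^2)$-part is killed entirely by the $+(\partial u)^2$ correction.
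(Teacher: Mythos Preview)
Your argument is correct, but the paper takes a different and considerably shorter route, especially for part~(iii). Rather than working with the quotient $w=f_1/f_2$ and the Schwarzian, the paper uses the symmetric formula $e^u=(|f_1|^2+|f_2|^2)/|W(f_1,f_2)|$ directly. This yields $\partial u=(f_1'\overline{f_1}+f_2'\overline{f_2})/(|f_1|^2+|f_2|^2)$, and one more application of $\partial$ gives
\[
\partial^2 u=\frac{f_1''\overline{f_1}+f_2''\overline{f_2}}{|f_1|^2+|f_2|^2}-(\partial u)^2.
\]
Now the differential equation $f_j''=-Af_j$ is invoked \emph{directly}, collapsing the fraction to $-A$; there is no cancellation bookkeeping whatsoever. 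Part~(i) is done the same way, by computing $\overline{\partial}\partial u$ from the above formula for $\partial u$. Part~(ii) is essentially the same in both treatments.

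Your approach via the decomposition $u=-\log|w'|+\log(1+|w|^2)$ has the virtue of making the link to the Liouville equation and to $S_w=2A$ explicit, but it forces you to manage singularities of the individual summands at poles of $w$. Your parenthetical remark that $w'$ stays finite and nonzero there is in fact false: at a zero of $f_2$, $w'=-W(f_1,f_2)/f_2^2$ has a double pole, and so does $1+|w|^2$; only their ratio is smooth. The identities still extend by continuity since $u$ itself is $C^2$, so this is a cosmetic gap rather than a fatal one, but the paper's symmetric formulation avoids the issue entirely because $|f_1|^2+|f_2|^2>0$ throughout~$\D$.
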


Let $f_1,f_2$ be linearly independent solutions of \eqref{eq:de2} for $A\in\H(\D)$.
The function $u = -\log \, (f_1/f_2)^{\#}$ has several interesting properties, which
make up the bulk of this paper. The underlying reason for the relevance of $u$
is its connection to regular conformal metrics of constant curvature.
Actually, $u$ is closely related to the general solution of Liouville's equation
in the case of $\D$. This point of view is elaborated further in Remark~\ref{remark:conformalmetric}, Section~\ref{sec:6}.
Nevertheless, we choose to proceed without the notation of conformal metrics.

Theorem~\ref{thm:repres}(i) 
implies $\Delta u = 4 \, ( (f_1/f_2)^{\#})^2 \geq 0$. Therefore $u$ is subharmonic, 
and $r \mapsto (1/(2\pi)) \int_0^{2\pi} u(re^{i\theta}) \, d\theta$ is a~non-decreasing and
convex function of $\log r$. 
Theorem~\ref{thm:repres}(iii) is a~counterpart of \eqref{eq:orig_rep}.
As $W(f_1,f_2)$ is a~non-zero complex constant,
$ \partial u  = (f_1'\overline{f}_1+f_2' \overline{f}_2)/(|f_1|^2+|f_2|^2)$
is finite-valued throughout~$\D$.

%%%%%%%%%%%%%%%%%%%%%%%%%%
%%%% ---- SUBSECTION ---- %%%%
%%%%%%%%%%%%%%%%%%%%%%%%%%

\subsection{Blaschke-oscillatory equations} \label{sec:bo}

The differential equation \eqref{eq:de2} is said
to be Blaschke-oscillatory, if $A\in\H(\D)$ and the zero-sequence $\{z_n\}$ of any non-trivial solution of \eqref{eq:de2}
satisfies the Blaschke condition $\sum_n (1-|z_n|)<\infty$. Such differential equations
are characterized by the fact that the quotient of any two linearly
independent solutions belongs to the Nevanlinna class \cite[Lemma~3]{H:2013}.
The Nevanlinna class $\NC$ consists of those meromorphic functions $w$ in~$\D$
such that $\int_{\D} w^\#(z)^2 (1-|z|^2) \, dm(z) <\infty$; see Section~\ref{sec:nevrem}. Meromorphic function $w$
is said to be of uniformly bounded characteristic, that is $w\in \rm{UBC}$, if $w^\#(z)^2 (1-|z|^2) \, dm(z)$
is a~Carleson measure. We refer to \cite[Theorem~3]{P:1991} for more details.% for the comparison to the classical definition.

Let $u\not\equiv -\infty$ be a~subharmonic function in $\D$. Harmonic function $h$ is said to be a~harmonic majorant
for $u$ if $u \leq h$ in $\D$. The least harmonic majorant $\hat{u}$ is a harmonic majorant 
which is point-wise smaller than any other harmonic majorant for $u$. 
If $f\in\H(\D)$, then it is well-known that $f\in\NC$ if and only if $\log^+ |f|$ admits a~harmonic majorant, while
$f\in H^p$ if and only if $|f|^p$ has a~harmonic majorant.

%%%%%%%%%%%%%%%%%%%%%%%%
%%%% ---- THEOREM ---- %%%%
%%%%%%%%%%%%%%%%%%%%%%%%

\begin{theorem} \label{theorem:new2}
Let $f_1,f_2$ be linearly independent solutions of \eqref{eq:de2} for $A\in\H(\D)$,
and define $u = -\log \, (f_1/f_2)^{\#}$. Then,
\begin{enumerate}
\item[\rm (i)]
$f_1/f_2\in \NC$ if and only if~$u$ has a harmonic majorant;

\item[\rm (ii)]
$f_1/f_2\in \NC$ and is normal if and only if $u_a(z)= u(a+(1-|a|)z)-u(a)$, $a\in\D$, have harmonic majorants
with $\sup_{a\in\D} \widehat{u_a}(0) < \infty$;

\item[\rm (iii)]
$f_1/f_2 \in \rm{UBC}$ if and only if $\sup_{a\in\D} ( \hat{u}(a) - u(a) ) < \infty$.
\end{enumerate}
Moreover,
\begin{enumerate}
\item[\rm (iv)]
all solutions of \eqref{eq:de2} belong to $\NC$ if and only if~$u$ has a~positive harmonic majorant;

\item[\rm (v)]
all solutions of \eqref{eq:de2} belong to $H^p$, for $0<p<\infty$, 
if and only if $\exp(\frac{p}{2} \, u)$ has a~harmonic majorant;

\item[\rm (vi)]
all solutions of \eqref{eq:de2} belong to $H^\infty$
if and only if $\exp(u) \in L^\infty$.
\end{enumerate}
\end{theorem}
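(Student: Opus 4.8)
The plan is to set $w=f_1/f_2$, so $w^\#=\abs{w'}/(1+\abs w^2)$ and $u=-\log w^\#$, and to build everything on two facts. The first is the identity
\[
  e^{u}=\frac{\abs{f_1}^2+\abs{f_2}^2}{\abs{W(f_1,f_2)}},
\]
valid because $w'=-W(f_1,f_2)/f_2^2$ gives $w^\#=\abs{W(f_1,f_2)}/(\abs{f_1}^2+\abs{f_2}^2)$; since $W(f_1,f_2)$ is a nonzero constant this reduces (iv)--(vi) to bookkeeping. Using that $f\in H^p$ (resp.\ $f\in\NC$) precisely when $\abs f^p$ (resp.\ $\log^+\abs f$) has a harmonic majorant, that $\NC\cap\H(\D)$ is a vector space, and the elementary bounds $\abs{c_1f_1+c_2f_2}^2\le(\abs{c_1}^2+\abs{c_2}^2)(\abs{f_1}^2+\abs{f_2}^2)$ and $(\abs{f_1}^2+\abs{f_2}^2)^{p/2}\le2^{p/2}(\abs{f_1}^p+\abs{f_2}^p)$, I would argue: (vi) $e^{u}\in L^\infty\iff f_1,f_2\in H^\infty\iff$ every solution is bounded; (v) $e^{(p/2)u}$ is a constant multiple of $(\abs{f_1}^2+\abs{f_2}^2)^{p/2}$, which has a harmonic majorant iff each $\abs{f_j}^p$ does iff every $\abs{c_1f_1+c_2f_2}^p$ does; (iv) it suffices to treat $f_1,f_2$, and if $u\le h$ with $h$ positive harmonic then $\log^+\abs{f_j}\le\tfrac12(\log^+\abs{W(f_1,f_2)}+h)$ is positive harmonic, while if $\log^+\abs{f_j}\le h_j$ (positive harmonic) then $u\le\log2+2(h_1+h_2)-\log\abs{W(f_1,f_2)}$ is a harmonic function bounded below, hence---raised by a constant---a positive harmonic majorant of $u$; here ``positive'' is essential because $u$ need not be bounded below.

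The second fact is $\Delta u=4(w^\#)^2$ (Theorem~\ref{thm:repres}(i)): $u$ is subharmonic with a smooth density, so the Riesz representation gives that $u$ has a least harmonic majorant $\hat u$ exactly when its Green potential is finite somewhere, and then $\hat u-u=\tfrac2\pi\int_\D G_\D(\,\cdot\,,\zeta)\,(w^\#(\zeta))^2\,dm(\zeta)$, where $G_\D$ is the Green function of $\D$. For (i), put the pole at $0$: $G_\D(0,\zeta)=\log(1/\abs\zeta)$ is comparable with $1-\abs\zeta^2$ away from the origin (where $w^\#$ is bounded), so the Green potential is finite iff $\int_\D(w^\#)^2(1-\abs\zeta^2)\,dm<\infty$, that is, $w\in\NC$. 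For (iii), the change of variables $\zeta=\varphi_a(\eta)$, using conformal invariance of $G_\D$ and $(w\circ\varphi_a)^\#=(w^\#\circ\varphi_a)\abs{\varphi_a'}$, rewrites $\hat u(a)-u(a)=2\,T_0(1^-,w\circ\varphi_a)$, where $T_0(1^-,g)=\tfrac1\pi\int_\D\log(1/\abs\eta)\,(g^\#(\eta))^2\,dm(\eta)$ is the Ahlfors--Shimizu characteristic; so $\sup_a(\hat u(a)-u(a))<\infty$ says exactly that the M\"obius translates of $w$ have uniformly bounded characteristic, i.e.\ $w\in\rm{UBC}$ (cf.\ \cite[Theorem~3]{P:1991}).

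Part (ii) is the delicate one. ``$u_a$ has a harmonic majorant'' is the same as ``$\widehat{u_a}(0)<\infty$'', and since $u_a(0)=0$ and $\Delta u_a(\eta)=4(1-\abs a)^2(w^\#(a+(1-\abs a)\eta))^2$, the Riesz representation of $u_a$ gives, after $\zeta=a+(1-\abs a)\eta$,
\[
  \widehat{u_a}(0)=\frac2\pi\int_{D(a,\,1-\abs a)}\log\frac{1-\abs a}{\abs{\zeta-a}}\,(w^\#(\zeta))^2\,dm(\zeta)=2\,T_0\!\big(1^-,w\circ\psi_a\big),\qquad \psi_a(\eta)=a+(1-\abs a)\eta,
\]
noting that $\log\tfrac{1-\abs a}{\abs{\zeta-a}}$ is the Green function of $D(a,1-\abs a)$ with pole at the centre. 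So (ii) asserts $\sup_aT_0(1^-,w\circ\psi_a)<\infty\iff w\in\NC$ and $w$ normal. For the forward implication, $a=0$ gives $w\in\NC$; were $w$ not normal, some $a_n$ would satisfy $(w\circ\psi_{a_n})^\#(0)=w^\#(a_n)(1-\abs{a_n})\to\infty$, and a rescaling argument of Zalcman type (cf.\ the normal-family theory in \cite{LV:1957}) would produce a nonconstant meromorphic limit $G$ on $\C$; then $T_0(r,G)\to\infty$, and since the logarithmic weight near the pole keeps the contribution of the rescaled piece bounded below, $T_0(1^-,w\circ\psi_{a_n})\to\infty$, contradicting the hypothesis. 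For the converse, I would split on the location of $a$: if $a$ lies in a fixed compact subset of $\D$ then $\widehat{u_a}(0)\le\hat u(a)-u(a)$ (since $\hat u(a+(1-\abs a)\,\cdot\,)-u(a)$ majorizes $u_a$), which is bounded there by continuity; if $a$ is near $\partial\D$ one uses normality, $(w^\#)^2\le C(1-\abs z^2)^{-2}$, splitting $D(a,1-\abs a)$ into the central disc $D(a,\tfrac12(1-\abs a))$, on which $1-\abs\zeta^2\asymp1-\abs a$ makes the estimate immediate, and a collar near the single point where $D(a,1-\abs a)$ is internally tangent to $\partial\D$, on which the Green weight $\log\tfrac{1-\abs a}{\abs{\zeta-a}}$ degenerates fast enough---after rescaling, against a convergent scale-free integral over $\C$---to absorb the blow-up of $(1-\abs z^2)^{-2}$. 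This is precisely where I expect the main difficulty to lie: because the Riesz decomposition naturally produces the Euclidean disc $D(a,1-\abs a)$, internally tangent to $\partial\D$, rather than a pseudohyperbolic disc, one must verify by direct computation both that the Green weight vanishes quickly enough toward the tangency point that the outer part of $D(a,1-\abs a)$ never spoils the uniform bound, and conversely that this weight is heavy enough near the pole that a uniform bound forces normality; parts (i) and (iv)--(vi) are, by comparison, routine once the identity $e^{u}=(\abs{f_1}^2+\abs{f_2}^2)/\abs{W(f_1,f_2)}$ is recorded.
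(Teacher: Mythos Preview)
Your treatment of (i), (iii), and (iv)--(vi) matches the paper's. The paper uses Green's theorem to write $\tfrac{1}{2\pi}\int_0^{2\pi}u(re^{i\theta})\,d\theta=u(0)+2T_0(r,f_1/f_2)$ for (i), and for (iii) computes $\hat u(a)-u(a)$ via the automorphism $\psi(z)=r\varphi_{a/r}(z/r)$ to obtain $\hat u(a)-u(a)\asymp\int_\D\Delta u(z)(1-|\varphi_a(z)|^2)\,dm(z)$, then invokes \cite[Theorem~3]{P:1991}; your Riesz--Green potential formulation and the identity $\hat u(a)-u(a)=2T_0(1^-,w\circ\varphi_a)$ are the same computation in different clothing. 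Parts (iv)--(vi) the paper declares ``straight-forward and hence omitted,'' so your spelled-out argument via $e^u=(|f_1|^2+|f_2|^2)/|W(f_1,f_2)|$ is exactly what is intended.

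The real divergence is in (ii). The paper, after deriving
\[
\widehat{u_a}(0)=\frac{2}{\pi}\int_0^1\!\bigg(\int_{D(a,t(1-|a|))}\big((f_1/f_2)^\#\big)^2\,dm\bigg)\frac{dt}{t},
\]
simply cites \cite[Theorem~1]{P:1991}, which states precisely that finiteness of the right-hand supremum characterizes normal functions in $\NC$; no further work is done. You instead reprove Pavi\'cevi\'c's theorem. Your converse argument is sound: for $|a|$ bounded away from $1$ the majorization $\widehat{u_a}(0)\le\hat u(a)-u(a)$ and continuity suffice, while for $|a|\ge 1/2$ (after rotation, $a\in[1/2,1)$) the identity $(1-|\eta|^2)+a|1-\eta|^2=(1-|\eta|^2)+a|1-\eta|^2\ge\min(1,a)\cdot 2\operatorname{Re}(1-\eta)$ reduces the collar integral to $\int_{\D}\log(1/|\eta|)/[\operatorname{Re}(1-\eta)]^2\,dm(\eta)$, which is indeed finite (near the tangency point $\eta=1$ one gets an integrand comparable to $1/\operatorname{Re}(1-\eta)$, and the region $|\eta|<1$ forces $|\operatorname{Im}(1-\eta)|\lesssim\sqrt{\operatorname{Re}(1-\eta)}$, so the integral converges). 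Your Zalcman argument for the forward direction also goes through: the rescaled limit $G$ contributes, over $|\zeta|<R$, an amount $\gtrsim L_n\cdot\tfrac1\pi\int_{|\zeta|<R}(G^\#)^2\,dm$ with $L_n=\log(1/(|\eta_n|+R\rho_n))\to\infty$, forcing $T_0(1^-,w\circ\psi_{a_n})\to\infty$. So your route is correct; it buys a self-contained proof at the cost of a page of estimates that \cite[Theorem~1]{P:1991} packages for free.
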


Recall that 
the following conditions are equivalent for any subharmonic function~$u$ in the unit disc
(see \cite[p.~66]{G:2007} for more details): (a) $u$ has a~positive harmonic majorant;
(b) the subharmonic function $u^+ = \max \{ u, 0\}$ has a~harmonic majorant;
(c) $u$ is majorized by a~Poisson integral of a~finite measure on $\partial\D$.
In Theorem~\ref{theorem:new2},
it is possible that $u$ admits a~harmonic majorant which takes negative values,
since there are Blaschke-oscillatory equations \eqref{eq:de2}
whose non-trivial solutions lie outside~$\NC$ \cite[Section~4.3]{H:2013}. 
Although the items (iv)--(vi) are immediate,
their assertions raise an interesting observation. 
Since we may describe the behavior of all solutions of \eqref{eq:de2} in terms of
$f_1/f_2$, no essential
information is reduced in this quotient. In Remark~\ref{remark:boder}, Section~\ref{sec:nevrem}, we illustrate that 
the growth of solutions of Blaschke-oscillatory equations
is severely restricted.

There are normal functions which do not belong to $\NC$.
Classical example of such a~function is the elliptic modular function \cite[p.~57]{LV:1957}.
If $f_1,f_2$ are linearly independent solutions of \eqref{eq:de2} for $A\in\H(\D)$, then
$f_1/f_2\in\mathcal{N}$ provided that $f_1/f_2$ is normal and
the set where $|f_1|^2+|f_2|^2$ takes small values, is not too large.

%%%%%%%%%%%%%%%%%%%%%%%%%%%
%%%% ---- PROPOSITION ---- %%%%
%%%%%%%%%%%%%%%%%%%%%%%%%%%

\begin{proposition} \label{prop:eset}
Let $f_1,f_2$ be linearly independent solutions of \eqref{eq:de2} for $A\in\H(\D)$.
The differential equation \eqref{eq:de2} is Blaschke-oscillatory if $f_1/f_2$
is normal and there exists $0<\delta<\infty$ such that
$\int_{\{z\in\D \; \! : \; \! |f_1(z)|^2+|f_2(z)|^2 < \delta\}} dm(z)/(1-|z|^2) < \infty$.
\end{proposition}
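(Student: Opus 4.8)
The plan is to use the characterization in Theorem~\ref{theorem:new2}(i): the equation \eqref{eq:de2} is Blaschke-oscillatory precisely when $f_1/f_2\in\NC$, which in turn holds if and only if $u=-\log\,(f_1/f_2)^{\#}$ admits a harmonic majorant. So it suffices to produce such a majorant from the two hypotheses. First I would record what normality of $w=f_1/f_2$ gives us directly: $\sup_{z\in\D}w^{\#}(z)(1-|z|^2)<\infty$, i.e. there is a constant $C$ with $-\log\,(1-|z|^2)\le u(z)+\log C$, which already bounds $u$ from below by a function comparable to $-\log(1-|z|^2)$; the issue is that Blaschke-oscillation requires control from \emph{above} on how negative $u$ can be — or rather, a harmonic function lying above $u$ — so the content is in bounding $u$ from above off a small exceptional set and then invoking subharmonicity.

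The second hypothesis controls exactly the region where $u$ is large: since $e^{-2u}=\big((f_1/f_2)^{\#}\big)^2 = |W(f_1,f_2)|^2/(|f_1|^2+|f_2|^2)^2$ by the Wronskian identity, the set $\{|f_1|^2+|f_2|^2<\delta\}$ is the same as $\{u > \tfrac12\log(|W|^2/\delta^2)\} = \{u>c_\delta\}$, and the hypothesis says $\int_{\{u>c_\delta\}}dm(z)/(1-|z|^2)<\infty$. The natural route is then: (a) use Theorem~\ref{thm:repres}(i), $\Delta u = 4e^{-2u}$, to write $u = h + G$ where $G$ is the Green potential $G(z) = -\tfrac{2}{\pi}\int_\D \log\big|\tfrac{z-\zeta}{1-\bar\zeta z}\big|\,e^{-2u(\zeta)}\,dm(\zeta)$ of the Riesz mass and $h$ is harmonic — provided the Green potential converges, which requires $\int_\D (1-|\zeta|^2)e^{-2u(\zeta)}\,dm(\zeta)<\infty$. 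So the real work is proving this Blaschke-type integrability of the Riesz mass; once we have it, $G$ is a (nonpositive) Green potential, $h=u-G\ge u$ is the sought harmonic majorant, and we are done by Theorem~\ref{theorem:new2}(i). I expect this integrability estimate — splitting $\D$ into $\{u>c_\delta\}$, where $e^{-2u}$ is tiny but the region could still a priori be fat near the boundary, and its complement $\{u\le c_\delta\}$, where $e^{-2u}\ge e^{-2c_\delta}$ but normality forces $e^{-2u}\le C^2(1-|z|^2)^2$ so $(1-|\zeta|^2)e^{-2u}\le C^2(1-|\zeta|^2)^3$ is integrable — to be the crux. On $\{u>c_\delta\}$ one uses $e^{-2u}\le e^{-2c_\delta}\cdot\mathbf 1$ crudely together with the lower bound $u(\zeta)\ge -\log(1-|\zeta|^2)-\log C$ from normality, giving $e^{-2u(\zeta)}\le C^2(1-|\zeta|^2)^2$ there too, hence $(1-|\zeta|^2)e^{-2u(\zeta)}\le C^2(1-|\zeta|^2)^3\cdot\mathbf 1_{\{u>c_\delta\}}$, whose integral is dominated by $C^2\int_{\{|f_1|^2+|f_2|^2<\delta\}}dm/(1-|z|^2)<\infty$ after absorbing the $(1-|z|^2)^4$ into the hypothesis. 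Thus normality is used twice — once to get the uniform lower bound on $u$, once to handle the bulk — and the $\delta$-hypothesis is only needed to certify that the troublesome set is thin; combining the two pieces gives $\int_\D(1-|\zeta|^2)e^{-2u}\,dm<\infty$.

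With the Riesz mass integrable, the classical Riesz decomposition of the subharmonic function $u$ on $\D$ yields $u = h - G$ (signs chosen so $G\ge 0$ is the Green potential of $\tfrac{1}{2\pi}\Delta u\,dm = \tfrac{2}{\pi}e^{-2u}\,dm$), so $h = u+G\ge u$ is harmonic and majorizes $u$; this is the harmonic majorant demanded by Theorem~\ref{theorem:new2}(i), whence $f_1/f_2\in\NC$ and, by \cite[Lemma~3]{H:2013}, \eqref{eq:de2} is Blaschke-oscillatory. The only subtlety in this last step is making sure $u\not\equiv-\infty$ so that the Riesz decomposition applies — but $u$ is real-valued and finite everywhere in $\D$ since $(f_1/f_2)^{\#}$ is finite and nonzero off the (discrete) zero set of the Wronskian-free quotient, and in fact $(f_1/f_2)^{\#}>0$ everywhere because $f_1/f_2$ is locally univalent, so $u$ is a genuine subharmonic function and the argument closes.
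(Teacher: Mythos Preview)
Your overall strategy reduces to proving $\int_\D \big((f_1/f_2)^\#\big)^2(1-|z|^2)\,dm(z)<\infty$, which is exactly what the paper does --- and since this integral is literally the definition of $f_1/f_2\in\NC$ given in Section~\ref{sec:bo}, the Riesz-decomposition detour through Theorem~\ref{theorem:new2}(i) is unnecessary once you have it.

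However, your splitting argument contains a sign error that makes the proof incorrect as written. Normality of $w=f_1/f_2$ says $w^\#(z)(1-|z|^2)\le C$, i.e.\ $e^{-u(z)}\le C/(1-|z|^2)$, which yields the lower bound $u(z)\ge \log(1-|z|^2)-\log C$, not $u(z)\ge -\log(1-|z|^2)-\log C$ as you wrote twice. With your reversed inequality you would obtain $e^{-2u}\le C^2(1-|z|^2)^2$, which is false in general; the correct consequence of normality is only $e^{-2u}\le C^2/(1-|z|^2)^2$. The fix --- which is the paper's two-line proof --- swaps the roles of the two estimates. On $\{|f_1|^2+|f_2|^2\ge\delta\}$ you do not need normality at all, since there $(f_1/f_2)^\#=|W(f_1,f_2)|/(|f_1|^2+|f_2|^2)\le|W(f_1,f_2)|/\delta$ is bounded and $(1-|z|^2)$ is trivially integrable over $\D$. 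On $\{|f_1|^2+|f_2|^2<\delta\}$ you use normality correctly to bound
\[
\big((f_1/f_2)^\#(z)\big)^2(1-|z|^2)
=\frac{\big((f_1/f_2)^\#(z)\big)^2(1-|z|^2)^2}{1-|z|^2}
\le \frac{C^2}{1-|z|^2},
\]
and then the hypothesis $\int_{\{|f_1|^2+|f_2|^2<\delta\}}dm(z)/(1-|z|^2)<\infty$ finishes the estimate.
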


%%%%%%%%%%%%%%%%%%%%%%%%%%
%%%% ---- SUBSECTION ---- %%%%
%%%%%%%%%%%%%%%%%%%%%%%%%%

\subsection{Nevanlinna interpolating sequences} \label{sec:nevint}

By recent advances
concerning free interpolation in $\NC$ \cite{HMNT:2004, HMN:preprint, MNT:preprint},
there is an astounding resemblance between uniformly separated sequences and Nevanlinna interpolating sequences.
Therefore the following results can be interpreted as Nevanlinna
analogues of ones that are either presented in Section~\ref{sec:bdd} or already appear in the literature.

Sequence $\Lambda\subset \D$ is 
called (free) interpolating for $\NC$ if the trace of 
$\NC$ on $\Lambda$ is ideal \cite[p.~3]{HMNT:2004}.
That is, for any $g\in\NC$ and for any bounded sequence $\{w_n\}\in\C$, there exists $f\in\NC$
such that $f(z_n) = w_n \, g(z_n)$ for all $z_n\in\Lambda$.
The collection of (free) interpolating sequences for $\NC$ is denoted by $\Int\NC$. 
Note that $\Lambda\in \Int\NC$ if and only if the trace $\NC \mid \Lambda$ contains
all bounded sequences \cite[Remark~1.1]{HMNT:2004}, and in particular,
all sequences in $\Int\NC$ satisfy the Blaschke condition.

Let $\hp(\D)$ denote the space of positive harmonic functions in $\D$. By \cite[Theorem~1.2]{HMNT:2004},
$\Lambda\in\Int\NC$ if and only if there exists $h\in\hp(\D)$ such that
\begin{equation} \label{eq:nevsep}
  \prod_{z_k\in\Lambda\setminus \{z_n\}} \left| \frac{z_k-z_n}{1-\overline{z}_kz_n}\right|
  \geq e^{-h(z_n)}, \quad z_n\in\Lambda.
\end{equation}
The reader is invited to compare \eqref{eq:nevsep} to 
the classical description \eqref{eq:sep} of uniformly separated sequences,
which are precisely the interpolating sequences for $H^\infty$.

%%%%%%%%%%%%%%%%%%%%%%%% 
%%%% ---- THEOREM ---- %%%%
%%%%%%%%%%%%%%%%%%%%%%%%

\begin{theorem} \label{thm:n3}
Let $\Lambda\in\Int\NC$.
Then, there exist $h \in\hp(\D)$ and $A\in \H(\D)$ such that $|A(z)|(1-|z|^2)^2 \leq e^{h(z)}$, $z\in\D$,
and \eqref{eq:de2} admits a~non-trivial solution whose zero-sequence is $\Lambda$.
\end{theorem}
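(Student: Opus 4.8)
The plan is to build the coefficient $A$ directly from a suitable product of bounded analytic functions associated to $\Lambda$, using the characterization \eqref{eq:nevsep} of Nevanlinna interpolating sequences to control the growth. First I would take the Blaschke product $B$ with zero-sequence $\Lambda$, which exists since $\Lambda\in\Int\NC$ implies $\Lambda$ satisfies the Blaschke condition. The idea is to produce a zero-free solution $f_2$ and a second solution $f_1$ with $f_1/f_2$ essentially equal to $B$ (or a primitive-type modification thereof), so that the zero-sequence of $f_1$ is exactly $\Lambda$. Concretely, if $g$ is a locally univalent meromorphic function with Schwarzian $S_g=2A$, then writing $g=f_1/f_2$ recovers linearly independent solutions via $f_2=(g')^{-1/2}$, $f_1=g(g')^{-1/2}$ (up to normalization by the Wronskian); the zeros of $f_1$ are the zeros of $g$, i.e. $\Lambda$. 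So the task reduces to choosing a locally univalent $g$ with zero-set $\Lambda$ whose Schwarzian $A=S_g/2$ satisfies $|A(z)|(1-|z|^2)^2\le e^{h(z)}$ for some positive harmonic $h$.

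Next I would make the choice of $g$ precise. A natural candidate is to take $g$ with $g'/g'$ controlled by the separation product in \eqref{eq:nevsep}: for instance set $g' = c\,\exp(\Phi)$ where $\Phi$ is chosen so that $g$ has the prescribed zeros and $\log|g'|$ has a harmonic majorant of the form $h$ furnished by \eqref{eq:nevsep}. Using Theorem~\ref{thm:repres}(iii), $A=-\partial^2 u-(\partial u)^2$ with $u=-\log(f_1/f_2)^\#=-\log g^\#$; combined with the formulas in Theorem~\ref{thm:repres}(i)--(ii) this gives pointwise control of $|A|$ in terms of $|\nabla u|$ and $e^{-u}=g^\#$. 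Since $g^\#(z)(1-|z|^2)$ and the oscillation of $\log g'$ are both governed by the harmonic majorant coming from \eqref{eq:nevsep}, one obtains a bound $|A(z)|(1-|z|^2)^2\le C\,e^{h(z)}$; absorbing the constant $C$ and the $(1-|z|^2)^2$ normalization into a (possibly larger) positive harmonic function — using that $h$ positive harmonic and $\log(1/(1-|z|^2))$ behaves well under addition of positive harmonic functions — yields the stated form with a new $h\in\hp(\D)$.

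The main obstacle will be the construction of $g$ itself: one needs a locally univalent meromorphic function whose zero-set is exactly $\Lambda$ (with the right multiplicities, here simple zeros) and whose derivative $g'$ is zero-free with $\log|g'|$ admitting the required harmonic majorant. This is where the Nevanlinna interpolation machinery enters decisively: rather than prescribing zeros of a solution directly, one prescribes the behavior of $\log g'$ at the points of $\Lambda$ and uses the interpolation theorem for $\NC$ (or the separation condition \eqref{eq:nevsep}) to realize it while keeping everything in the Nevanlinna class. A secondary technical point is verifying local univalence of $g$ — equivalently $g'\ne 0$ everywhere — which must be arranged by the construction and not merely hoped for; a standard device is to take $g'=h_0^2$ for an appropriate zero-free $h_0\in\NC$, after which $f_2=1/h_0$ and $f_1=g/h_0$ are the desired linearly independent solutions, with $A=S_g/2$ analytic by \cite[Theorem~6.1]{L:1993}. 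Once $g$ is in hand, the growth estimate for $A$ is a routine consequence of the identities in Theorem~\ref{thm:repres} together with \eqref{eq:nevsep}.
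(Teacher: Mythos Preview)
Your proposal has a genuine gap at exactly the point you flag as ``the main obstacle'': you never construct the locally univalent function $g$ with zero-set $\Lambda$ and controlled Schwarzian. The two devices you suggest do not work. Prescribing $g'=c\exp(\Phi)$ or $g'=h_0^2$ for a zero-free $h_0\in\NC$ determines $g$ up to an additive constant, so you can force at most one zero of $g$, not the entire sequence $\Lambda$. Likewise, ``prescribing the behaviour of $\log g'$ at the points of $\Lambda$'' via Nevanlinna interpolation gives no handle on where $g$ itself vanishes. In the Schwarzian picture the zeros of $f_1$ are the zeros of $g=f_1/f_2$, so this is not a side issue: without an explicit construction of $g$ with simple zeros exactly at $\Lambda$ \emph{and} with $g'$ nowhere zero, there is no candidate $A=S_g/2$.

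The paper avoids this difficulty by building the \emph{solution} directly rather than the quotient: it sets $f=Be^{Bg}$, where $B=B_\Lambda$ and $g\in\NC$ solves the interpolation problem $g(z_n)=-B''(z_n)/(2B'(z_n)^2)$. The point of these specific values is that they force $f''$ to vanish at each $z_n$, so $A=-f''/f$ extends analytically across $\Lambda$. The growth estimate $|A(z)|(1-|z|^2)^2\le e^{h(z)}$ is then \emph{not} routine: the term $(B''+2B'(Bg)')/B$ in the explicit formula for $A$ has $B$ in the denominator, and controlling it requires a separate lemma (the paper's Lemma~\ref{lemma:betterest}) showing that an analytic function satisfying a harmonic-majorant bound and vanishing on $\Lambda\in\Int\NC$ picks up an extra factor $\varrho_p(\Lambda,z)$, together with the lower bound $|B(z)|\ge\varrho_p(\Lambda,z)e^{-h_3(z)}$ from \cite{HMN:preprint}. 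Your claim that the growth bound follows ``routinely'' from Theorem~\ref{thm:repres} overlooks this cancellation issue entirely.
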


By \cite[Corollary~1.9]{HMNT:2004},
Theorem~\ref{thm:n3} allows us the prescribe any separated Blaschke sequence
to be a~zero-sequence of a~non-trivial solution of \eqref{eq:de2}. Theorem~\ref{thm:n3}
should be compared to \cite[Theorem~1]{G:2017}, according to which any separated sequence
of sufficiently small upper uniform density can appear as a~subset of the zero-sequence of a~non-trivial solution
of \eqref{eq:de2} under the coefficient condition $A\in H^\infty_2$. The coefficient condition in Theorem~\ref{thm:n3}
is of different nature as it controls the growth in an~average sense. On one hand, the restriction
$|A(z)|(1-|z|^2)^2 \leq e^{h(z)}$, $z\in\D$ and $h\in\hp(\D)$, passes through
functions such as $A(z) = (e/(1-z))^k$ for any $0<k<\infty$.
On the other hand, it implies that there exists $0<C<\infty$ such that
\begin{equation} \label{eq:Aclose}
\int_0^{2\pi} \log^+ |A(re^{i\theta})| \, d\theta \leq 2 \log^+ \frac{1}{1-r} + C, \quad r\to 1^-,
\end{equation}
which is an~estimate that cannot be improved even if $A\in H^\infty_2$. 
Estimate \eqref{eq:Aclose} reveals that such coefficient $A$ lies close to $\NC$
as it is non-admissible.

The following result is an~analogue of \cite[Theorem~5]{G:2017_1}, and
is related to the classical $0,1$-interpolation 
result due to Carleson \cite[Theorem~2]{C:1962}. The Nevanlinna
counterpart of Carleson's result is presented in Section~\ref{sec:carleson_a}.

%%%%%%%%%%%%%%%%%%%%%%%% 
%%%% ---- THEOREM ---- %%%%
%%%%%%%%%%%%%%%%%%%%%%%%

\begin{theorem} \label{thm:carleson}
Assume that $\alpha,\beta\in\C\setminus\{0\}$ are distinct values.
Let $\{z_n\},\{\zeta_n\}$ be any Blaschke sequences, and let $B_{\{z_n\}}$ and $B_{\{\zeta_n\}}$ be
the corresponding Blaschke products. If there exists $h\in\hp(\D)$ such that
\begin{equation} \label{eq:cca}
\big|B_{\{z_n\}}(z)\big| + \big|B_{\{\zeta_n\}}(z)\big| \geq e^{-h(z)}, \quad z\in\D,
\end{equation} 
then there exists
$A \in \H(\D)$ and $H\in\hp(\D)$ such that
$|A(z)|(1-|z|^2)^2 \leq e^{H(z)}$, $z\in\D$, and \eqref{eq:de2} admits a~solution
$f$ with $f(z_n)=\alpha$ and $f(\zeta_n)=\beta$ for all $n$.
\end{theorem}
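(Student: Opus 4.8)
The plan is to build the solution $f$ as a bounded-type modification of an explicit function interpolating the values $\alpha$ on $\{z_n\}$ and $\beta$ on $\{\zeta_n\}$, and then recover the coefficient $A$ from a zero-free second solution via the representation \eqref{eq:orig_rep}. First I would use the separation hypothesis \eqref{eq:cca} together with the corona-type machinery for $\NC$ (the Nevanlinna analogue of Carleson's $0,1$-interpolation result, promised in Section~\ref{sec:carleson_a}) to produce $F_0\in\NC$ with $F_0(z_n)=\alpha$, $F_0(\zeta_n)=\beta$ for all $n$; concretely one writes $F_0 = \alpha\,\Phi_1 + \beta\,\Phi_2$ where $\Phi_1,\Phi_2\in\NC$ solve the $\NC$-corona problem $B_{\{\zeta_n\}}\Phi_1 + B_{\{z_n\}}\Phi_2 = 1$, so that $\Phi_1\equiv 1$ on $\{z_n\}$ and $\equiv 0$ on $\{\zeta_n\}$ (and vice versa). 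The key point is that membership in $\NC$ is exactly "$\log^+|F_0|$ has a harmonic majorant", which is what will feed the coefficient bound at the end.

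The obstacle is that $F_0$ need not be a solution of any equation of the form \eqref{eq:de2}: we have no control over its zeros or its local univalence. The standard remedy, used repeatedly in this circle of ideas, is to pass to a zero-free auxiliary function. I would set $f_2 = e^{g}$ for a suitable $g\in\NC$ (so $f_2$ is zero-free and of Nevanlinna type) and then take $f_1 = f_2\cdot G$ where $G$ is a primitive chosen so that $f_1/f_2 = G$ realizes the interpolation data — i.e. $G(z_n)=\alpha$, $G(\zeta_n)=\beta$ — and so that $W(f_1,f_2)=f_2^2 G'$ is a nonzero constant, which forces $G' = c\,e^{-2g}$, i.e. $g = -\tfrac12\log(G'/c)$. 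This is consistent only if $G'$ is zero-free and of Nevanlinna type, so the real content is: \emph{choose the interpolating function for the data $\{(z_n,\alpha),(\zeta_n,\beta)\}$ to have a zero-free, Nevanlinna-class derivative.} One way is to replace $F_0$ by $G(z)=\alpha + (\beta-\alpha)\Psi(z)$ where $\Psi$ is built from a \emph{Blaschke product} vanishing on $\{z_n\}$ composed appropriately, or more robustly, to integrate: let $G' = c\, e^{-2g_0}B$ where $B$ is a Blaschke product absorbing any unavoidable critical points and $g_0\in\NC$ is chosen so that $G=\int_0^z G'$ hits $\alpha$ on $\{z_n\}$ and $\beta$ on $\{\zeta_n\}$; the freedom in $g_0$ and in the constant is used to meet the two interpolation conditions. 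Making these countably many conditions compatible with "$g_0\in\NC$" is where \eqref{eq:cca} is used a second time, via the description \eqref{eq:nevsep} of $\Int\NC$ and \cite[Corollary~1.9]{HMNT:2004}: the hypothesis \eqref{eq:cca} guarantees that $\{z_n\}\cup\{\zeta_n\}$ (suitably) lies in $\Int\NC$, so the required trace values can be attained by an $\NC$ function.

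With $f_1,f_2$ in hand, $A$ is defined by $A=-f_2''/f_2$ (legitimate since $f_2$ is zero-free), which by \eqref{eq:orig_rep} equals $-(\log f_2)'' - ((\log f_2)')^2 = g'' + (g')^2$ with $g=-\tfrac12\log(G'/c)\in\NC$; one checks directly that $f_1=f_2 G$ is then the second solution and that $f_1/f_2=G$ realizes the prescribed values. Finally I would verify the growth bound $|A(z)|(1-|z|^2)^2\le e^{H(z)}$ for some $H\in\hp(\D)$: since $g\in\NC$, both $g'$ and $g''$ satisfy, by the standard pointwise estimates for Nevanlinna functions (derivatives of a Nevanlinna function grow at most like a positive harmonic function times $(1-|z|^2)^{-1}$, resp. $(1-|z|^2)^{-2}$), bounds of the form $|g'(z)|(1-|z|^2)\le e^{h_1(z)}$ and $|g''(z)|(1-|z|^2)^2\le e^{h_2(z)}$ with $h_1,h_2\in\hp(\D)$; hence $|A(z)|(1-|z|^2)^2\le |g''(z)|(1-|z|^2)^2 + (|g'(z)|(1-|z|^2))^2 \le e^{h_2(z)} + e^{2h_1(z)} \le 2 e^{\max\{h_2,2h_1\}(z)} \le e^{H(z)}$ for $H = \log 2 + \max\{h_2,2h_1\} + (\text{a harmonic majorant of }\log 2)$, absorbed into a single $H\in\hp(\D)$. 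The main obstacle, as indicated, is the middle step: arranging the interpolation data and the Wronskian normalization simultaneously while keeping the logarithmic derivative $g$ in the Nevanlinna class — this is precisely the Nevanlinna analogue of the Earl/\O yma-type construction used in Theorems~\ref{thm:steinmetz}--\ref{thm:intp}, and it is driven entirely by \eqref{eq:cca} and the interpolation theory of $\Int\NC$.
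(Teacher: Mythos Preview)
Your bookends are right --- the Nevanlinna corona solves the $0,1$-interpolation (this is exactly Proposition~\ref{prop:carleson} in the paper), and the growth bound on $A$ follows from the pointwise derivative estimates \eqref{eq:nevder} for Nevanlinna functions --- but the middle step has a genuine gap. The theorem asks for a \emph{solution} $f$ with $f(z_n)=\alpha$ and $f(\zeta_n)=\beta$; your construction instead arranges for the \emph{quotient} $f_1/f_2=G$ to take these values, and then $f_1(z_n)=f_2(z_n)\,\alpha=e^{g(z_n)}\alpha$, which is not $\alpha$. Forcing $e^{g(z_n)}=1$ on top of the Wronskian normalization $G'=c\,e^{-2g}$ and the interpolation $G(z_n)=\alpha$, $G(\zeta_n)=\beta$ piles up conditions you never show are simultaneously achievable, and the claim that \eqref{eq:cca} forces $\{z_n\}\cup\{\zeta_n\}\in\Int\NC$ is neither justified nor needed.

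The paper sidesteps all of this with a one-line trick that your setup almost reaches: since $\alpha,\beta\neq 0$, once you have $g\in\NC$ with $g(z_n)=0$, $g(\zeta_n)=1$ from the Nevanlinna corona, put
\[
f(z)=\exp\!\big(\log\alpha + g(z)\log(\beta/\alpha)\big).
\]
Then $f$ is zero-free, so $A=-f''/f\in\H(\D)$ automatically, and $f$ itself interpolates $\alpha,\beta$. No second solution, no Wronskian normalization, and no $\Int\NC$ argument are required; the coefficient bound follows exactly as in your last paragraph, applying \eqref{eq:nevder} to $g$.
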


We turn to study differential equations with solutions in $\NC$. 
It turns out that Steinmetz's approach from \cite[Theorem, p.~129]{S:2012} applies with obvious changes.

%%%%%%%%%%%%%%%%%%%%%%%%
%%%% ---- THEOREM ---- %%%%
%%%%%%%%%%%%%%%%%%%%%%%% 

\begin{theorem} \label{thm:n1}
If $f_1,f_2\in \mathcal{N}$ are linearly independent solutions of \eqref{eq:de2} for $A\in\H(\D)$, then 
there exists $H\in\hp(\D)$ such that $|A(z)|(1-|z|^2)^3 \leq e^{H(z)}$ and $(f_1/f_2)^{\#}(z)(1-|z|^2)^2 \leq e^{H(z)}$, $z\in\D$.
\end{theorem}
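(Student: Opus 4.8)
The plan is to imitate Steinmetz's argument from \cite[p.~129]{S:2012}, which treats the case $f_1,f_2\in H^\infty$, and adapt it to the Nevanlinna class by systematically replacing bounds ``$\le C$'' with bounds ``$\le e^{H(z)}$'' for suitable $H\in\hp(\D)$. The starting point is that $f_1,f_2\in\NC$ are linearly independent solutions with constant Wronskian $W=W(f_1,f_2)\neq 0$. Since $\NC$ is an algebra and is closed under the relevant operations, $E=f_1f_2\in\NC$ and, more to the point, $\log^+|f_1|$ and $\log^+|f_2|$ have harmonic majorants; by the equivalent formulations recalled before Theorem~\ref{theorem:new2}, this is the same as saying each $f_j$ is a quotient of $H^\infty$ functions, or that $|f_j|$ is majorized by $e^{v_j}$ for positive harmonic $v_j$. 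Set $h_0=v_1+v_2+1\in\hp(\D)$, so that $|f_1(z)|,|f_2(z)|\le e^{h_0(z)}$ throughout $\D$.

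First I would recover the pointwise bound on the spherical derivative of the quotient. Writing $w=f_1/f_2$, one has $w'=W/f_2^2$, hence
\begin{equation*}
w^{\#}(z)(1-|z|^2)^2=\frac{|W|\,(1-|z|^2)^2}{|f_1(z)|^2+|f_2(z)|^2}.
\end{equation*}
The numerator is trivially bounded, so the whole issue is a lower bound for $|f_1|^2+|f_2|^2$; but this is exactly the quantity $e^{-2u}/\! \text{(const)}$ in the notation $u=-\log w^{\#}$ of Theorem~\ref{thm:repres}, and Steinmetz's key estimate is that $|f_1|^2+|f_2|^2$ cannot be too small relative to the growth scale. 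In the bounded-solution case Steinmetz proves $|f_1(z)|^2+|f_2(z)|^2\gtrsim (1-|z|^2)^2$; the Nevanlinna-class analogue should read $|f_1(z)|^2+|f_2(z)|^2\ge e^{-H_1(z)}(1-|z|^2)^2$ for some $H_1\in\hp(\D)$. To get this I would use the representation $f_j=W^{1/2}(w')^{-1/2}\cdot(\text{unimodular-type factor})$ together with the fact that $1/|f_2|^2=|w'|/|W|$ and that $\log|w'|$ — equivalently $\log(1/|f_2|^2)$ — is, up to an additive harmonic function, controlled by $\log^+|f_2|$ having a harmonic majorant. Concretely, $\log(1/|f_2|^2)$ is the difference of two functions each admitting a harmonic majorant (namely $2\log^+|1/f_2|$ and $2\log^+|f_2|$), and since $1/f_2$ is again in the quotient field of $H^\infty$ one extracts a positive harmonic majorant for $\log(1/|f_2|)$; exponentiating gives the lower bound on $|f_2|$ on a large set, and symmetrically for $|f_1|$, so their sum is bounded below by $e^{-H_1}$ after enlarging $H_1$. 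This yields the second assertion, $(f_1/f_2)^{\#}(z)(1-|z|^2)^2\le e^{H(z)}$.

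Next I would handle the coefficient via the Bank--Laine-type representation recalled in the Introduction, $4A=(E'/E)^2-(W/E)^2-2E''/E$ with $E=f_1f_2$. Multiplying through by $(1-|z|^2)^3$ and estimating each term: $(W/E)^2(1-|z|^2)^3$ is handled by the lower bound $|E|=|f_1||f_2|\ge$ (something like $e^{-H_1}(1-|z|^2)^2$ on the complement of the zero sets, with a Cauchy-estimate argument near the zeros exactly as in Steinmetz); and the logarithmic-derivative terms $E'/E$ and $E''/E$ are controlled by the standard estimate that for $g\in\NC$ one has $|g'(z)/g(z)|\le C(1-|z|)^{-1}\cdot e^{H_2(z)}$ type bounds coming from Poisson-representing $\log|g|$, or more cleanly by differentiating the majorants $|f_j(z)|\le e^{h_0(z)}$ using Cauchy's formula on a subdisc of radius $\sim(1-|z|)$, which gives $|f_j'(z)|\le C(1-|z|)^{-1}e^{2h_0(\text{nearby point})}$, and harmonic functions are comparable on hyperbolic balls by Harnack, so $e^{2h_0(\text{nearby})}\le e^{CH_3(z)}$ with $H_3\in\hp(\D)$. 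Assembling, $|A(z)|(1-|z|^2)^3\le e^{H(z)}$ after taking $H$ to dominate all the $H_i$ and absorbing constants.

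The main obstacle I expect is the lower bound for $|f_1|^2+|f_2|^2$ near the common ``small-value'' set: in the $H^\infty$ case Steinmetz exploits boundedness of $f_1,f_2$ to run a Schwarz--Pick/Cauchy estimate argument showing the pair cannot both be small on too large a region, and transcribing this to $\NC$ requires replacing the uniform bound $\|f_j\|_{H^\infty}$ by the local bound $e^{h_0}$ and checking that the Harnack-type comparison of $h_0$ on hyperbolic balls does not destroy the argument — in particular that the ``bad set'' where the lower bound degrades is still negligible in the right sense. A secondary technical point is that the zeros of $f_1,f_2$ are genuinely present (unlike the simplifying assumption $\inf(|f_1|+|f_2|)>0$), so near a zero of $f_2$ one must pass to $f_1$ and use $W\neq 0$ to guarantee $f_1$ is bounded away from $0$ there — this is routine once the quantitative Cauchy estimates are in place, but it is the place where one must be careful that all error terms remain majorized by a single positive harmonic function rather than merely by a finite constant.
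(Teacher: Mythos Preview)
Your plan takes a much harder road than necessary and has two concrete gaps.

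For the spherical derivative you try to lower-bound $|f_1|^2+|f_2|^2$ by lower-bounding each $|f_j|$ separately, invoking a harmonic majorant for $\log(1/|f_2|)$. That fails outright: $f_2$ has zeros in $\D$, so $\log^+|1/f_2|$ admits no harmonic majorant and $|f_2|\ge e^{-H}$ is simply false. Passing to $f_1$ near zeros of $f_2$ is the right instinct, but the clean way to encode it is exactly the Cauchy--Schwarz inequality on the Wronskian (equation~\eqref{eq:csaa} in the paper):
\[
|W(f_1,f_2)|^2\le\big(|f_1|^2+|f_2|^2\big)\big(|f_1'|^2+|f_2'|^2\big),
\]
which gives $(f_1/f_2)^\#=|W|/(|f_1|^2+|f_2|^2)\le(|f_1'|^2+|f_2'|^2)/|W|$ at every point, zeros included. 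Then the derivative estimate \eqref{eq:nevder} (Cauchy plus Harnack, which you did identify) finishes immediately.

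For the coefficient you reach for the Bank--Laine representation $4A=(E'/E)^2-(W/E)^2-2E''/E$, but $E=f_1f_2$ vanishes at every zero of either solution, and each of the three terms has a pole there; the singularities cancel in the sum, but bounding each term separately requires exactly the delicate ``near the zeros'' argument you wave at without carrying out, and your proposed lower bound $|E|\gtrsim e^{-H_1}(1-|z|^2)^2$ is false at those points. The paper avoids all of this with the division-free identity
\[
W(f_1,f_2)\,A=f_1'f_2''-f_1''f_2',
\]
obtained from $f_j''=-Af_j$. Since $W$ is a nonzero constant, \eqref{eq:nevder} applied to $f_1',f_2',f_1'',f_2''$ gives $|A(z)|(1-|z|^2)^3\le e^{h_1(z)}$ in one line. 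You have the right analytic tool (Cauchy estimates combined with Harnack) but are missing the two algebraic identities that make the proof three sentences long.
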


We may also ask when the stronger estimate
$|A(z)|(1-|z|^2)^2 \leq e^{H(z)}$, $z\in\D$, is obtained? The following result is
analogous to Theorem~\ref{thm:converse_finalx}; generalization of the assumption \eqref{eq:toloN}
to higher derivatives is left to the interested reader.

%%%%%%%%%%%%%%%%%%%%%%%%
%%%% ---- THEOREM ---- %%%%
%%%%%%%%%%%%%%%%%%%%%%%% 

\begin{theorem} \label{thm:n2}
If $f_1,f_2\in \mathcal{N}$ are linearly independent solutions of \eqref{eq:de2} for $A\in\H(\D)$ such that
\begin{equation} \label{eq:toloN}
\sum_{j=1,2} \Big( |f_j(z)| + |f_j'(z)|(1-|z|^2) \Big) \geq e^{-h(z)}, \quad z\in\D,
\end{equation}
for $h\in\hp(\D)$, then there exists $H\in\hp(\D)$ such that $|A(z)|(1-|z|^2)^2 \leq e^{H(z)}$, $z\in\D$.
\end{theorem}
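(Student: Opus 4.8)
The plan is to combine Theorem~\ref{thm:n1} with the hypothesis \eqref{eq:toloN} to upgrade the power $3$ to the power $2$, mimicking the structure of the proof of Theorem~\ref{thm:converse_finalx} but in the Nevanlinna setting. First I would invoke Theorem~\ref{thm:n1}: since $f_1,f_2\in\NC$, there is $H_0\in\hp(\D)$ with $(f_1/f_2)^\#(z)(1-|z|^2)^2\leq e^{H_0(z)}$ and $|A(z)|(1-|z|^2)^3\leq e^{H_0(z)}$ for all $z\in\D$. The first of these tells us that the auxiliary function $u=-\log(f_1/f_2)^\#$ satisfies $u(z)\geq -H_0(z)-2\log\tfrac{1}{1-|z|^2}$, and by Theorem~\ref{thm:repres} we have the representation $A=-\partial^2 u-(\partial u)^2$, so the task reduces to controlling $\partial u$ and $\partial^2 u$ pointwise in terms of a positive harmonic function. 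From Theorem~\ref{thm:repres}(iii), $\partial u=(f_1'\overline f_1+f_2'\overline f_2)/(|f_1|^2+|f_2|^2)$, which is already bounded by $(|f_1'|+|f_2'|)/(|f_1|+|f_2|)$ up to a constant.

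The key point is that \eqref{eq:toloN} provides a lower bound for the denominator in a \emph{weighted} sense. Write $g=|f_1|^2+|f_2|^2$; then $g(z)\geq c\bigl(\sum_j(|f_j(z)|+|f_j'(z)|(1-|z|^2))\bigr)^2\cdot(\text{something})$ is \emph{not} quite available directly, so instead I would argue at the level of the first-order system. Set $v_j=f_j$, so that $(f_1,f_1',f_2,f_2')$ satisfies a first-order linear system with coefficient matrix whose only nonconstant entry is $A$. Because $f_j\in\NC$ we also control $f_j'$ near the boundary via standard estimates: $|f_j'(z)|(1-|z|^2)\leq C\,\nm{\cdot}$-type bounds combined with Theorem~\ref{thm:n1}. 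Now, using $A=f_j''/f_j$ wherever $f_j\neq 0$ is useless because of zeros; the robust route is to use the Wronskian normalization $W(f_1,f_2)=1$ (after scaling) and the elementary identity
\begin{equation*}
A=-\,\frac{f_1''}{f_1}\quad\text{(off zeros of $f_1$)},\qquad
A=\frac{f_1'f_2''-f_1''f_2'}{f_1f_2'-f_1'f_2}=f_1'f_2''-f_1''f_2'
\end{equation*}
together with $f_j''=-Af_j$, which yields the tautology $A=A(f_1f_2'-f_1'f_2)/W$; again not directly a bound. The actual estimate I expect to need is: solve for $A$ locally using whichever of $f_1,f_2,f_1',f_2'$ is largest. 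Concretely, if at a point $z$ the maximum of the four quantities $|f_1|,|f_1'|(1-|z|^2),|f_2|,|f_2'|(1-|z|^2)$ is comparable (by \eqref{eq:toloN}) to $e^{-h(z)}$, then differentiating the system once more and using $f_j''=-Af_j$, $f_j'''=-A'f_j-Af_j'$, one extracts $A$ (and incidentally $A'$) as a ratio whose numerator involves third derivatives controlled by $A$ itself times lower-order terms — so one gets a self-improving inequality of the form $|A(z)|(1-|z|^2)^2\leq e^{H_1(z)}+ C|A(z)|(1-|z|^2)^3\cdot e^{H_1(z)}/e^{-h(z)}$, which is not yet closed.

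Because of that circularity, I think the cleaner plan is the $u$-route with a Harnack/harmonic-majorant argument rather than a pointwise bootstrap. From $\partial u=(f_1'\bar f_1+f_2'\bar f_2)/g$ and Cauchy--Schwarz, $|\partial u(z)|\leq (|f_1'|+|f_2'|)/\sqrt{g}$; and \eqref{eq:toloN} gives $\sqrt{g}+\max_j|f_j'|(1-|z|^2)\gtrsim e^{-h}$, so on the set where $\sqrt g\geq \tfrac12 e^{-h}$ we get $|\partial u|(1-|z|^2)\lesssim e^{h}(|f_1'|+|f_2'|)(1-|z|^2)/1$, and $(|f_1'|+|f_2'|)(1-|z|^2)\lesssim e^{H_0}$ by Theorem~\ref{thm:n1} (applied to the normal quotient, plus the fact that $f_j\in\NC$ forces $f_j'(z)(1-|z|^2)^{?}$ to be majorized — this is where I would cite the Nevanlinna growth estimate underlying Theorem~\ref{thm:n1}); hence $|\partial u|(1-|z|^2)\lesssim e^{H_0+h}$. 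On the complementary set where $\sqrt g<\tfrac12 e^{-h}$, necessarily $\max_j|f_j'|(1-|z|^2)\gtrsim e^{-h}$, and one runs the same estimate with the roles of $f_j$ and $f_j'$ interchanged, using the second-order equation $f_j''=-Af_j$ together with the already-known bound $|A|(1-|z|^2)^3\leq e^{H_0}$ to re-enter. In either case $|\partial u(z)|(1-|z|^2)\leq e^{H_1(z)}$ for some $H_1\in\hp(\D)$ (sums and finite maxima of positive harmonic functions, and $H_0+h$, stay in $\hp(\D)$ after enlarging; more precisely one replaces $\max$ of two positive harmonic functions by their sum). Finally, for $\partial^2 u$: differentiate $\partial u=(f_1'\bar f_1+f_2'\bar f_2)/g$, or better use Theorem~\ref{thm:repres}(iii) in the form $\partial^2 u=-A-(\partial u)^2$ — but that is what we want to bound, so instead use Theorem~\ref{thm:repres}(i), $\Delta u=4e^{-2u}=4((f_1/f_2)^\#)^2$, hence $e^{-2u(z)}=\tfrac14\Delta u(z)\leq e^{2H_0(z)}(1-|z|^2)^{-4}$, i.e. $u(z)\geq -H_0(z)+2\log(1-|z|^2)$ (consistent with the above), and then bound $\partial^2 u$ by a Cauchy-estimate on the disc $D(z,\tfrac14(1-|z|^2))$ from the bound on $\partial u$: $|\partial^2 u(z)|\lesssim (1-|z|^2)^{-1}\sup_{D}|\partial u|\lesssim (1-|z|^2)^{-2}\sup_D e^{H_1}$, and $\sup_{D(z,\frac14(1-|z|^2))}e^{H_1}\leq e^{CH_1(z)}$ by Harnack, so $|\partial^2 u(z)|(1-|z|^2)^2\leq e^{C H_1(z)}$. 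Combining via $A=-\partial^2 u-(\partial u)^2$ gives $|A(z)|(1-|z|^2)^2\leq |\partial^2 u(z)|(1-|z|^2)^2+(|\partial u(z)|(1-|z|^2))^2\leq e^{CH_1(z)}+e^{2H_1(z)}\leq e^{H(z)}$ for a suitable $H\in\hp(\D)$, since $e^{a}+e^{b}\leq e^{a+b+\log 2}$ and $H_1+CH_1+\log 2$ can be absorbed into a positive harmonic function after noting $\log 2\leq \log 2\cdot h/\!\inf h$ is majorized by a constant positive harmonic function. \emph{The main obstacle} I anticipate is making rigorous the ``interchange $f_j\leftrightarrow f_j'$'' step on the set where $\sqrt g$ is small: one must verify that when $|f_j'|(1-|z|^2)$ is the dominant quantity, the formula $\partial u=(f_1'\bar f_1+f_2'\bar f_2)/g$ still yields $|\partial u|(1-|z|^2)\lesssim e^{H_1}$ rather than blowing up, which requires a careful Cauchy--Schwarz/AM--GM bookkeeping using $g=|f_1|^2+|f_2|^2\geq |f_k|^2$ for the index $k$ realizing the max of $|f_j'|$, together with the a priori derivative bound from Theorem~\ref{thm:n1}; and checking that the exceptional set structure does not force $h$ to degenerate (it does not, because \eqref{eq:toloN} is a pointwise hypothesis valid on all of $\D$). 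A secondary bookkeeping issue is that finite maxima and positive-constant shifts must all be re-expressed as single positive harmonic majorants; this is routine since $\hp(\D)$ is a convex cone containing all positive constants and is closed under addition, and $\max(h_1,h_2)\leq h_1+h_2$.
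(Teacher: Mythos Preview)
Your proposal has a genuine gap at the step where you ``bound $\partial^2 u$ by a Cauchy-estimate on the disc $D(z,\tfrac14(1-|z|^2))$ from the bound on $\partial u$.'' The function $\partial u$ is \emph{not} holomorphic: by Theorem~\ref{thm:repres}(i) one has $\bar\partial(\partial u)=\tfrac14\Delta u=e^{-2u}>0$ everywhere, so Cauchy's integral formula simply does not apply to $\partial u$, and there is no general inequality of the form $|\partial v(z)|\lesssim r^{-1}\sup_{D(z,r)}|v|$ for an arbitrary $C^1$ function $v$. One could attempt to subtract a Cauchy--Green potential of $e^{-2u}$ to isolate a holomorphic part, but the available lower bound $g=|f_1|^2+|f_2|^2\gtrsim e^{-H_0}(1-|z|^2)^2$ only gives $e^{-2u}\lesssim e^{2H_0}(1-|z|^2)^{-4}$ on the relevant disc, and the resulting correction to $\partial^2 u$ is of order $(1-|z|^2)^{-3}$---exactly the wrong power. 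So the passage from $|\partial u|$ to $|\partial^2 u|$ collapses, and with it the whole $A=-\partial^2 u-(\partial u)^2$ route.

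There is a second, independent gap in the case analysis for $|\partial u|(1-|z|^2)$. On the set where $\sqrt g<\tfrac12 e^{-h}$ you invoke an ``interchange $f_j\leftrightarrow f_j'$'' step, but the formula $\partial u=(f_1'\bar f_1+f_2'\bar f_2)/g$ still has the small quantity $g$ in the denominator; Cauchy--Schwarz gives only $|\partial u|\leq\sqrt{|f_1'|^2+|f_2'|^2}/\sqrt g$, and knowing that some $|f_j'|(1-|z|^2)$ is \emph{large} makes this worse, not better. The hypothesis \eqref{eq:toloN} is a lower bound on $\sum_j(|f_j|+|f_j'|(1-|z|^2))$, which does not by itself furnish a lower bound on $g$ improving on the one coming from Theorem~\ref{thm:n1}. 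You yourself flag this obstacle, but the text does not overcome it.

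The paper's proof is entirely different and avoids $u$ altogether. It is the direct Nevanlinna analogue of the second proof of Theorem~\ref{thm:converse_finalx}: hypothesis~\eqref{eq:toloN} is exactly the condition in \cite[Theorem~1]{HMN:preprint} guaranteeing that the ideal $I_{\NC}(f_1,f_2)$ contains a Blaschke product $B$ whose zero-sequence lies in $\Int\NC$. Thus $f_1g_1+f_2g_2=B$ for some $g_1,g_2\in\NC$; differentiating twice and using $f_j''=-Af_j$ gives the explicit representation~\eqref{eq:diff_twice}, namely $A=\big(2(f_1'g_1'+f_2'g_2')+f_1g_1''+f_2g_2''-B''\big)/B$. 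The numerator satisfies $|\,\cdot\,|(1-|z|^2)^2\leq e^{H_1(z)}$ by \eqref{eq:nevder}, it vanishes on the zeros of $B$ (since $A\in\H(\D)$), and Lemma~\ref{lemma:betterest} together with \cite[Theorem~1.2]{HMN:preprint} then absorbs the denominator $B$, yielding the claimed bound. The key input you are missing is this ideal-theoretic result for $\NC$; without it, a purely pointwise argument along your lines does not seem to close.
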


The sequence $\Lambda\subset\D$ is called $h$-separated, if 
there exists $h\in\hp(\D)$ such that the pseudo-hyperbolic discs $\Delta_p(z_n,e^{-h(z_n)})$, $z_n\in\Lambda$,
are pairwise disjoint. 
Recall that the pseudo-hyperbolic disc of radius $0<\delta<1$, centered
at $z\in\D$, is given by $\Delta_p(z,\delta) = \{ w \in\D : \varrho_p(z,w) < \delta \}$
where $\varrho_p(z,w) = |w-z|/|1-\overline{w}z|$ is the pseudo-hyperbolic distance between $z,w\in\D$.
The following result 
corresponds to Schwarz's findings \cite[Theorems~3-4]{S:1955} in the case $A\in H^\infty_2$.

%%%%%%%%%%%%%%%%%%%%%%%%%%% 
%%%% ---- PROPOSITION ---- %%%%
%%%%%%%%%%%%%%%%%%%%%%%%%%%

\begin{proposition} \label{prop:n4}
Suppose that there exist $A\in \H(\D)$ and $H\in\hp(\D)$ such that $|A(z)|(1-|z|^2)^2 \leq e^{H(z)}$, $z\in\D$.
Then, there exists $h\in\hp(\D)$ such that 
the zero-sequence of any non-trivial solution of \eqref{eq:de2} is $h$-separated.

Conversely, suppose that $A\in \H(\D)$ and there exists $h\in\hp(\D)$ such that
the zero-sequence of any non-trivial solution of \eqref{eq:de2} is $h$-separated. 
Then, there exists $H\in\hp(\D)$ such that $|A(z)|(1-|z|^2)^2 \leq e^{H(z)}$, $z\in\D$.
\end{proposition}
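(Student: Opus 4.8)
The proposition is a Nevanlinna/positive-harmonic-majorant version of the Schwarz equivalence between $A\in H^\infty_2$ and hyperbolic separation of zeros of all non-trivial solutions. I would establish the two implications separately, and in both directions the key device is to transport the hypothesis to discs $\Delta_p(z,\delta)$ of a fixed pseudo-hyperbolic radius, estimate on each such disc using an automorphism $\varphi_z$ of $\D$, and then record that the quantities $\log^+\!\big(|A(z)|(1-|z|^2)^2\big)$, $H(z)$, etc., have controlled oscillation across such a disc. For $H\in\hp(\D)$ this last point is exactly Harnack's inequality: $H(w)\asymp H(z)$ with constants depending only on $\delta$ whenever $\varrho_p(z,w)<\delta$, so a pointwise bound $e^{-h(z)}$ and the pointwise bound $e^{H(z)}$ are stable (up to harmless constants and a fixed additive term, which can be absorbed into a new positive harmonic function, e.g.\ $H+\mathrm{const}$, or by enlarging $h$) under moving the base point within a disc.

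\textbf{First implication.} Assume $|A(z)|(1-|z|^2)^2\le e^{H(z)}$. Fix a non-trivial solution $f$ of \eqref{eq:de2} with zero $z_0$. Pull back to the origin via $\varphi_{z_0}$: the function $g=f\circ\varphi_{z_0}$ satisfies $g''+\widetilde A g=0$ with $\widetilde A(w)=A(\varphi_{z_0}(w))\,\varphi_{z_0}'(w)^2$, and $|\widetilde A(0)|=|A(z_0)|(1-|z_0|^2)^2\le e^{H(z_0)}$. On a fixed disc $|w|\le r_0<1$ Harnack gives $|\widetilde A(w)|\le C\,e^{H(z_0)}$. Now $g(0)=0$, $g'(0)=W(f_1,f_2)\varphi_{z_0}'(0)/(\cdots)\ne 0$ — more simply, since $g\not\equiv 0$ and $g(0)=0$ we have $g'(0)\ne 0$, and we may normalise. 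A standard ODE/Gr\"onwall argument (exactly the comparison-theorem mechanism already cited in the Introduction) shows that on $|w|\le r_0$ the solution $g$ has no further zero once $\int |\widetilde A|$ over that disc is below an absolute threshold; since that integral is $\le C'e^{H(z_0)}r_0^2$, it suffices to shrink the radius: take $\delta=\delta(z_0)$ comparable to $e^{-H(z_0)/2}$. Translating back, $f$ has no zero in $\Delta_p(z_0,\delta(z_0))$ other than $z_0$. Choosing $h(z)$ comparable to $H(z)+\mathrm{const}$ (so that $e^{-h(z_0)}\lesssim \delta(z_0)$ and $h\in\hp(\D)$), one gets that the zero-sequence is $h$-separated in the sense defined: the discs $\Delta_p(z_n,e^{-h(z_n)})$ are pairwise disjoint, because each contains only its own center. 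Here one must be slightly careful that disjointness (not merely "no center lies in another disc") holds; this is arranged by halving $h$ once more, using that $\varrho_p(z_n,z_m)\ge 2\max\{e^{-h(z_n)},e^{-h(z_m)}\}$ forces disjointness, and Harnack keeps $h\in\hp(\D)$ under this adjustment.

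\textbf{Converse implication.} Assume every non-trivial solution has $h$-separated zeros for a fixed $h\in\hp(\D)$. I want $|A(z)|(1-|z|^2)^2\le e^{H(z)}$. Fix $z\in\D$ and pull back via $\varphi_z$ as above to $g''+\widetilde A g=0$ with $|\widetilde A(0)|=|A(z)|(1-|z|^2)^2$. Pick the solution $g$ of the pulled-back equation with $g(0)=0$, $g'(0)=1$; this is $c\,(f\circ\varphi_z)$ for a suitable solution $f$ of \eqref{eq:de2}, hence its zeros near $0$ are controlled: by hypothesis $f$, and therefore $g$, has no zero in the pseudo-hyperbolic disc of radius $e^{-h(z)}$ about $0$ other than $0$ itself. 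Now invoke the classical fact (this is the content of Schwarz's theorem 3 in \cite{S:1955}, or a direct Picard-iteration/Gr\"onwall estimate) that a solution with a simple zero at $0$ and $g'(0)=1$ which is zero-free in $|w|<\rho$ forces $\sup_{|w|\le\rho/2}|\widetilde A(w)|$ — and in particular $|\widetilde A(0)|$ — to be at most $C/\rho^2$. Taking $\rho=e^{-h(z)}$ yields $|A(z)|(1-|z|^2)^2\le C\,e^{2h(z)}$. Setting $H=2h+\log C$ gives $H\in\hp(\D)$ (a positive harmonic function plus a positive constant) and the desired bound.

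\textbf{Main obstacle.} The genuinely delicate point is not either ODE estimate — those are standard comparison/Gr\"onwall arguments available from the cited literature — but the interplay between the \emph{pointwise} nature of the hypotheses/conclusions and the fact that separation is a statement about \emph{pairs} of zeros at possibly very different radii. One must ensure that the radius $\rho=e^{-h(z)}$ used at the base point $z$ is genuinely the right scale at \emph{every} point of the relevant disc, which is where Harnack's inequality for $h\in\hp(\D)$ is essential and must be invoked carefully (the constant in Harnack degrades as $\delta\to 1$, so the fixed pseudo-hyperbolic radius of the comparison disc must be chosen \emph{a priori}, independent of $z$, and only \emph{then} does one shrink to $e^{-h(z)}$). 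A second, more bookkeeping-level obstacle is passing from "no other center in the disc" to genuine pairwise disjointness of the discs $\Delta_p(z_n,e^{-h(z_n)})$: this forces one extra halving of $h$, and one should check this does not break positivity or harmonicity (it does not, since $h\mapsto h/2$ preserves $\hp(\D)$). Once these normalisations are fixed, both directions are routine.
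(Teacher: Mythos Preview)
Your forward direction is sound in outline: pull back by $\varphi_{z_0}$, invoke Harnack, then a localized Nehari/comparison argument yields no second zero in a disc of pseudo-hyperbolic radius comparable to $e^{-cH(z_0)}$. One bookkeeping slip: Harnack gives $H(\varphi_{z_0}(w))\le \frac{1+r_0}{1-r_0}\,H(z_0)$, so the bound on $|\widetilde A|$ is $\lesssim e^{C_{r_0}H(z_0)}$ with the constant in the \emph{exponent}, not as a multiplicative prefactor. This only changes your final $h$ to a constant multiple of $H$ (plus a constant), which is still in $\hp(\D)$, so the slip is harmless.

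The converse direction has a genuine gap. The ``classical fact'' you invoke --- that a \emph{single} solution $g$ of $g''+\widetilde A g=0$ with $g(0)=0$, $g'(0)=1$, and no other zero in $|w|<\rho$, forces $|\widetilde A(0)|\le C/\rho^{2}$ --- is false. Take $g(w)=w\,e^{Mw^2}$: then $g$ is zero-free on $\C\setminus\{0\}$, yet $\widetilde A=-g''/g=-6M-4M^{2}w^{2}$ has $|\widetilde A(0)|=6|M|$, which is unbounded in $M$. Schwarz's converse \cite[Theorem~4]{S:1955} uses the separation of zeros of \emph{all} solutions, and that is exactly what you must use here: if every non-trivial solution has at most one zero in $\Delta_p(z,\rho)$, then the locally univalent quotient $w=f_1/f_2$ is genuinely univalent on that disc (since $w=a$ exactly at the zeros of $f_1-af_2$). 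Kraus--Nehari after rescaling then gives $|S_w(z)|(1-|z|^{2})^{2}\le 6/\rho^{2}$, i.e.\ $|A(z)|(1-|z|^{2})^{2}\le 3/\rho^{2}$. Your hypothesis and a Harnack argument (to stabilize $e^{-h}$ across the disc, exactly the issue you flag under ``main obstacle'') supply $\rho\asymp e^{-Ch(z)}$, and the conclusion follows with $H=2Ch+\mathrm{const}$. So the repair is to replace the single-solution claim by the univalence-of-the-quotient argument.

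For comparison, the paper does not argue either direction by hand. It invokes \cite[Theorem~15]{CGHR:2013}, a general equivalence between the growth condition $|A(z)|\psi(z)^{2}(1-|z|^{2})^{2}\le C$ and $\psi$-separation of zeros of all non-trivial solutions, valid for any $\psi:\D\to(0,1/2)$ satisfying the self-similarity hypothesis $\sup_{a,z\in\D}\psi(a)/\psi(\varphi_a(\psi(a)z))<\infty$. The entire proof then reduces to verifying this hypothesis for $\psi(z)=e^{-1}e^{-H(z)/2}$, $H\in\hp(\D)$, which is a short Harnack computation. Your (corrected) approach reproves the relevant instance of that theorem from scratch; the paper's is shorter but opaque without unpacking the cited result.
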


%%%%%%%%%%%%%%%%%%%%%%%%%%
%%%% ---- SUBSECTION ---- %%%%
%%%%%%%%%%%%%%%%%%%%%%%%%%

\subsection{Point-wise growth restrictions} \label{sec:pw}

Function $\omega: \D \to (0,\infty)$ is said to be a~weight if it is
bounded and continuous. The weight $\omega$ is radial
if $\omega(z) = \omega(|z|)$ for all $z\in\D$, and is called regular if
it is radial and for each $0\leq s <1$ there exists a~constant $C=C(s,\omega)$ with $1\leq C<\infty$
such that 
\begin{equation} \label{eq:regular}
C^{-1} \, \omega(t)\leq \omega(r) \leq C \, \omega(t), \quad 0\leq r \leq t \leq r+s(1-r) < 1.
\end{equation}
For a~general reference for regular weights, see \cite[Chapter~1]{PR:2014}.
For a~weight $\omega$, let $L^\infty_\omega$ denote the growth space which consists of functions
$f:\D \to \C$ for which $\nm{f}_{L^\infty_\omega} = \sup_{z\in\D} |f(z)| \, \omega(z) <\infty$,
and denote $H^\infty_\omega=L^\infty_\omega\cap \H(\D)$.

%%%%%%%%%%%%%%%%%%%%%%%%
%%%% ---- THEOREM ---- %%%%
%%%%%%%%%%%%%%%%%%%%%%%%

\begin{theorem} \label{theorem:c2}
Let $f_1,f_2$ be linearly independent solutions of \eqref{eq:de2} for $A\in\H(\D)$,
and define $u = -\log \, (f_1/f_2)^{\#}$.
Suppose that $\omega$ is a~regular weight which satisfies $\sup_{z\in\D}\, \omega(z)/(1-|z|) < \infty$.
Then, $|\nabla u | \in L^\infty_\omega$
if and only if $A\in H^\infty_{\omega^2}$ and \mbox{$(f_1/f_2)^\#\in L^\infty_\omega$}.
\end{theorem}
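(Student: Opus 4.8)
The plan is to exploit the identities in Theorem~\ref{thm:repres} to pass back and forth between the coefficient $A$, the spherical derivative $(f_1/f_2)^\#=e^{-u}$, and the gradient $\nabla u$. Write $w=f_1/f_2$, so $w^\#=e^{-u}$ and, by Theorem~\ref{thm:repres}(iii), $A=-\partial^2 u-(\partial u)^2$. Recall also from Section~\ref{sec:id} that $\partial u=(f_1'\overline{f}_1+f_2'\overline{f}_2)/(|f_1|^2+|f_2|^2)$, so $|\partial u|=|\nabla u|/\sqrt 2$ is finite throughout~$\D$, and from Theorem~\ref{thm:repres}(i) that $\Delta u=4\,e^{-2u}$, i.e. $4\,\overline{\partial}\partial u=4(w^\#)^2$.

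\medskip

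\emph{The forward direction} ($|\nabla u|\in L^\infty_\omega$ implies the two conclusions). First, since $\overline\partial\partial u=(w^\#)^2=e^{-2u}\geq 0$ and $|\partial u|\lesssim\omega(z)^{-1}\lesssim(1-|z|)^{-1}$, I want to bound $e^{-u}$. The cleanest route is the subharmonicity of $u$: from $\Delta u=4e^{-2u}>0$, $u$ is subharmonic, and a gradient bound $|\nabla u(z)|\le C/\omega(z)$ together with the regularity hypothesis $\omega(z)/(1-|z|)$ bounded lets me integrate along a hyperbolic geodesic from the origin (or from a fixed reference point) to control $u(z)-u(0)$ from below by $-C\int_0^{|z|}\omega(t)^{-1}\,dt$. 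Since $\omega$ is regular with $\omega(z)\lesssim 1-|z|$, one has the Dini-type estimate $\int_0^r\omega(t)^{-1}\,dt\lesssim\log\frac{1}{1-r}+O(1)$? — no, that is only the borderline case $\omega(t)\asymp 1-t$; for a general regular $\omega$ with $\omega(t)\lesssim 1-t$ I instead get $\int_0^r\omega(t)^{-1}dt$ comparable to something like $\log(1/\omega(r))$ up to additive constants, by the standard doubling-type computation for regular weights in \cite[Chapter~1]{PR:2014}. Either way this yields $e^{-u(z)}\le C'/\omega(z)$, i.e. $(f_1/f_2)^\#\in L^\infty_\omega$. Second, to get $A\in H^\infty_{\omega^2}$, I estimate $|A|\le|\partial^2 u|+|\partial u|^2$. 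The term $|\partial u|^2\le C^2/\omega(z)^2$ is immediate. For $|\partial^2 u|$ I use a Cauchy-estimate / Schwarz–Pick argument: $\partial u$ is a (non-holomorphic but smooth) function with $|\partial u|\lesssim\omega^{-1}$, and $\partial^2 u=\partial(\partial u)$; on a pseudo-hyperbolic disc $\Delta_p(z,1/2)$ the weight $\omega$ is comparable to $\omega(z)$ by regularity, and on this disc $\overline\partial(\partial u)=e^{-2u}\lesssim\omega(z)^{-2}$ is also controlled, so writing $\partial u$ on this disc as (holomorphic part with $\overline\partial=0$) plus a solution of a $\overline\partial$-problem with right-hand side $\lesssim\omega(z)^{-2}$, a standard interior estimate (Cauchy estimate on the holomorphic part, $L^\infty$ elliptic estimate on the correction) gives $|\partial^2 u(z)|\lesssim\omega(z)^{-2}$. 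Hence $|A(z)|\omega(z)^2\le C$.

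\medskip

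\emph{The converse direction} ($A\in H^\infty_{\omega^2}$ and $(f_1/f_2)^\#\in L^\infty_\omega$ imply $|\nabla u|\in L^\infty_\omega$). Here $e^{-u}=(f_1/f_2)^\#\lesssim\omega^{-1}$ is given, and $|A|\lesssim\omega^{-2}$. From $A=-\partial^2 u-(\partial u)^2$ I get $\partial^2 u=-A-(\partial u)^2$; this is a (complex) Riccati-type relation for $g:=\partial u$, namely $\partial g=-A-g^2$, together with $\overline\partial g=\tfrac14\Delta u=e^{-2u}\lesssim\omega^{-2}$. So $g$ satisfies a first-order nonlinear system with both $\partial g$ and $\overline\partial g$ expressed in terms of $g$, $A$, $u$. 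The standard way to convert this into a pointwise bound on $g$ is a fixed-point / a~priori-estimate argument on pseudo-hyperbolic discs: on $\Delta_p(z_0,1/2)$ rescale by the conformal automorphism $\varphi_{z_0}$, so that $\omega$ becomes essentially constant $\asymp\omega(z_0)$, the coefficient $A$ gets the extra factor $|\varphi_{z_0}'|^{-2}\asymp(1-|z_0|^2)^{-2}$ in the transformed equation and is still $\lesssim(\omega(z_0)/(1-|z_0|^2))^{2}\lesssim 1$ (using $\omega(z)\lesssim 1-|z|$), and likewise $e^{-2u}$ transforms to something $\lesssim 1$; then a local bound for the transformed $g$ on the half-disc follows from the $\overline\partial$-estimate plus the equation $\partial g=-A-g^2$ by a bootstrap (first $\overline\partial g$ bounded gives $g$ in a Sobolev/$\mathrm{BMO}$-type class, then the equation upgrades this), and scaling back gives $|\partial u(z_0)|=|g(z_0)|\lesssim\omega(z_0)^{-1}$, i.e. $|\nabla u|\in L^\infty_\omega$. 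A slicker alternative, if available, is to recognize that this is exactly the statement that $w=f_1/f_2$ being "uniformly locally in $\mathcal B$ relative to $\omega$" is equivalent to the pair of conditions on its Schwarzian $S_w=2A$ and its spherical derivative $w^\#$ — a Schwarz–Pick-type characterization — and to cite or adapt the corresponding result from the normal-function literature (e.g. the $\omega$-normal function machinery).

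\medskip

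\emph{Main obstacle.} The genuinely delicate point is the converse direction: controlling $\nabla u$ (equivalently $\partial u$) from control of $A$ and $e^{-u}$ alone. One has the two differential relations $\partial(\partial u)=-A-(\partial u)^2$ and $\overline\partial(\partial u)=e^{-2u}$, but these only pin down $\partial u$ up to solving a nonlinear $\overline\partial$-type problem, and the nonlinearity $(\partial u)^2$ means a naive elliptic estimate is circular. The fix — localizing to pseudo-hyperbolic discs, rescaling so that the weight is frozen and all data are $O(1)$, and running a bootstrap that uses the boundedness of $\overline\partial(\partial u)$ to seed the estimate before the quadratic term is absorbed — is where the hypothesis $\sup_{z}\omega(z)/(1-|z|)<\infty$ is essential (it is exactly what keeps the rescaled coefficient bounded) and where the regularity of $\omega$ is used (to freeze $\omega$ on each disc). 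I expect the forward direction's estimate of $\partial^2 u$ via the $\overline\partial$-problem on a fixed-size disc to be routine by comparison, and the lower bound $e^{-u}\lesssim\omega^{-1}$ from the gradient bound to reduce to the weight computations of \cite[Chapter~1]{PR:2014}.
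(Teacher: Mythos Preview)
Your forward-direction argument contains a genuine gap. The claimed estimate $\int_0^r \omega(t)^{-1}\,dt \lesssim \log(1/\omega(r))$ is false for regular weights satisfying $\omega(t)\lesssim 1-t$: take $\omega(t)=(1-t)^2$, which is regular in the sense of \eqref{eq:regular} and satisfies the hypothesis, and observe $\int_0^r (1-t)^{-2}\,dt \asymp (1-r)^{-1}$ while $\log(1/\omega(r)) = 2\log(1/(1-r))$. Global integration of $|\nabla u|$ from the origin therefore only yields $e^{-u(z)}\lesssim \exp\!\big(C\int_0^{|z|}\omega(t)^{-1}\,dt\big)$, which is far too weak. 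The fix is to work \emph{locally}: on a Euclidean disc $D(z,c\,\omega(z))$ the weight is comparable to $\omega(z)$ by regularity, so the $\varrho_\omega$-diameter of this disc is bounded and the gradient hypothesis forces $u$ (equivalently $|f_1|^2+|f_2|^2$) to oscillate only by a bounded additive constant there. The paper packages this as Lemmas~\ref{lemma:smoothness}--\ref{lemma:derprop}: Cauchy's formula on $D(z,c\,\omega(z))$ then gives $(|f_1^{(j)}|+|f_2^{(j)}|)/(|f_1|+|f_2|)\lesssim\omega^{-j}$, so $|A|=(|f_1''|+|f_2''|)/(|f_1|+|f_2|)\in L^\infty_{\omega^2}$ directly via \eqref{eq:preli}, and $(f_1/f_2)^\#\in L^\infty_\omega$ follows from $4\,((f_1/f_2)^\#)^2=\Delta u\le e^{-u}\Delta e^u$ (Theorem~\ref{thm:repres}(i)--(ii)). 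Your $\overline\partial$-decomposition for $\partial^2 u$ is unnecessary and itself relies on the flawed bound for $e^{-2u}$.

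For the converse you correctly locate the obstacle, but the bootstrap sketch is not a proof: from $\overline\partial g$ bounded you can write $g=h+c$ with $h$ holomorphic and $c$ H\"older, yet $\partial c$ is not controlled in $L^\infty$, so the quadratic term in $\partial g=-A-g^2$ cannot be absorbed as you describe. The paper follows exactly your ``slicker alternative'': it invokes Yamashita's estimate \cite[Corollary to Theorem~2]{Y:2002}, which bounds $(1-|z|^2)\,|\partial u(z)|$ (up to a harmless additive term) by $2/\min\{\rho(z,h),\rho_a(z,h)\}$, where $\rho(z,h)$ and $\rho_a(z,h)$ are the pseudo-hyperbolic univalence radius and antipodal-avoidance radius of $h=f_1/f_2$ at $z$. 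Both radii are then shown to be at least $c\,\omega(z)/(1-|z|^2)$: the first by applying Nehari's criterion \cite{N:1949} to $(h\circ\varphi_a)(\psi(a)\,\cdot\,)$ with $\psi(a)=c\,\omega(a)/(1-|a|^2)$, using $S_h=2A\in H^\infty_{\omega^2}$; the second via the spherical Lipschitz bound $\sigma(h(z),h(\zeta))\le\int_{\langle z,\zeta\rangle}h^\#\,|d\xi|$ combined with $h^\#\in L^\infty_\omega$. The hypothesis $\sup_z\omega(z)/(1-|z|)<\infty$ enters precisely to keep $\psi$ bounded away from~$1$, as you anticipated.
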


The following result follows directly from Theorem~\ref{theorem:c2} 
with $\omega(z)=1-|z|^2$, $z\in\D$. This corollary concerns those
differential equations \eqref{eq:de2} which have both desired properties mentioned in Section~\ref{sec:bdd}:
$A\in H^\infty_2$ and $(f_1/f_2)^\# \in L^\infty_1$.

%%%%%%%%%%%%%%%%%%%%%%%%%%
%%%% ---- COROLLARY ---- %%%%
%%%%%%%%%%%%%%%%%%%%%%%%%%

\begin{corollary} \label{cor:partialconverse}
Let $f_1,f_2$ be linearly independent solutions of \eqref{eq:de2} for $A\in\H(\D)$,
and define $u = -\log \, (f_1/f_2)^{\#}$.
Then, $|\nabla u|\in L^\infty_1$ if and only if 
$A\in H^\infty_2$ and $f_1/f_2$ is normal.
\end{corollary}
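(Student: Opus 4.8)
The plan is to deduce this corollary as the special case $\omega(z)=1-|z|^2$ of Theorem~\ref{theorem:c2}, so the only real work is to verify that this choice of $\omega$ satisfies the hypotheses of that theorem and that the conclusion translates into the stated form. First I would check that $\omega(z)=1-|z|^2$ is a regular weight: it is bounded (by $1$), continuous, radial, and the required two-sided comparison \eqref{eq:regular} holds because $1-t$ and $1-r$ are comparable when $0\le r\le t\le r+s(1-r)$, with comparison constant depending only on $s$ (indeed $(1-s)(1-r)\le 1-t\le 1-r$, and $1-|z|^2=(1-|z|)(1+|z|)$ is comparable to $1-|z|$). The growth restriction $\sup_{z\in\D}\omega(z)/(1-|z|)=\sup_{z\in\D}(1+|z|)\le 2<\infty$ is immediate.

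Next I would identify the three objects appearing in Theorem~\ref{theorem:c2} for this weight. We have $L^\infty_\omega=L^\infty_1$ by definition, since $\nm{f}_{L^\infty_\omega}=\sup_{z\in\D}|f(z)|(1-|z|^2)=\nm{f}_{L^\infty_1}$; similarly $\omega^2(z)=(1-|z|^2)^2$, so $H^\infty_{\omega^2}=H^\infty_2$ in the notation fixed in the Introduction. Finally, the condition $(f_1/f_2)^\#\in L^\infty_\omega=L^\infty_1$, i.e. $\sup_{z\in\D}(f_1/f_2)^\#(z)(1-|z|^2)<\infty$, is by definition the statement that the meromorphic function $f_1/f_2$ is normal, as recalled in Section~\ref{sec:bdd}. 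Substituting these identifications into Theorem~\ref{theorem:c2} yields: $|\nabla u|\in L^\infty_1$ if and only if $A\in H^\infty_2$ and $f_1/f_2$ is normal, which is exactly the assertion of the corollary.

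I do not anticipate a genuine obstacle here; the statement is explicitly flagged in the excerpt as following directly from Theorem~\ref{theorem:c2}. The only point that requires a sentence of care is the verification of regularity of $\omega(z)=1-|z|^2$, and in particular keeping track that the constant $C$ in \eqref{eq:regular} may depend on $s$ but not on $r$ — this is fine because the bound $(1-s)\le (1-t)/(1-r)\le 1$ is uniform in $r$. Everything else is a matter of unwinding definitions, so the proof is a short paragraph.

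\begin{proof}
This is the special case $\omega(z)=1-|z|^2$, $z\in\D$, of Theorem~\ref{theorem:c2}. Indeed, $\omega(z)=1-|z|^2$ is bounded, continuous and radial, and it is regular: if $0\le r\le t\le r+s(1-r)<1$, then $(1-s)(1-r)\le 1-t\le 1-r$, hence $1-t$ and $1-r$ are comparable with a constant depending only on $s$, and the same holds for $1-|z|^2=(1-|z|)(1+|z|)$ since $1\le 1+|z|\le 2$; this gives \eqref{eq:regular} with some $C=C(s)$. Moreover $\sup_{z\in\D}\omega(z)/(1-|z|)=\sup_{z\in\D}(1+|z|)=2<\infty$. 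For this weight we have $L^\infty_\omega=L^\infty_1$ and $H^\infty_{\omega^2}=H^\infty_2$ directly from the definitions, while the condition $(f_1/f_2)^\#\in L^\infty_\omega=L^\infty_1$ means precisely that $\sup_{z\in\D}(f_1/f_2)^\#(z)(1-|z|^2)<\infty$, i.e. that $f_1/f_2$ is normal. Substituting these identifications into Theorem~\ref{theorem:c2} shows that $|\nabla u|\in L^\infty_1$ if and only if $A\in H^\infty_2$ and $f_1/f_2$ is normal.
\end{proof}
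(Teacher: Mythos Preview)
Your proof is correct and matches the paper's approach exactly: the paper states just before the corollary that it ``follows directly from Theorem~\ref{theorem:c2} with $\omega(z)=1-|z|^2$, $z\in\D$,'' and you have simply filled in the routine verifications that this weight is regular and satisfies the growth restriction, and that the conditions in Theorem~\ref{theorem:c2} translate as claimed.
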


Corollary~\ref{cor:partialconverse} can also be deduced by combining
several results in the literature.
The first part follows from 
\cite[Theorem~6]{AM:2011},
while the second part can be concluded
from \cite[Theorem~1]{Y:2002} and \cite[Corollary to Theorem~2]{Y:2002}.
Note that $f_1/f_2$ is uniformly locally univalent provided that $A\in H^\infty_2$, which
can be seen by applying Nehari's univalency criterion \cite[Theorem~I]{N:1949} locally.

%%%%%%%%%%%%%%%%%%%%%%%%%%
%%%% ---- COROLLARY ---- %%%%
%%%%%%%%%%%%%%%%%%%%%%%%%%

\begin{corollary} \label{cor:c2}
Let $f_1,f_2$ be linearly independent solutions of \eqref{eq:de2} for $A\in H^\infty_{\omega^2}$,
and define $u = -\log \, (f_1/f_2)^{\#}$.
Suppose that $\omega$ is a~regular weight which satisfies $\sup_{z\in\D}\, \omega(z)/(1-|z|) < \infty$.
Then, the following statements are equivalent:
\begin{enumerate}
\item[\rm (i)]
$|\nabla u |\in L^\infty_\omega$;

\item[\rm (ii)]
$(f_1/f_2)^\#\in L^\infty_\omega$;

\item[\rm (iii)]
$(|f_1'|+|f_2'|)/(|f_1|+|f_2|)\in L^\infty_\omega$;

\item[\rm (iv)]
$\Delta u \in L^\infty_{\omega^2}$.
\end{enumerate}
\end{corollary}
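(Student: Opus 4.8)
The plan is to deduce Corollary~\ref{cor:c2} from Theorem~\ref{theorem:c2} together with the identities in Theorem~\ref{thm:repres}. Since we are now assuming $A\in H^\infty_{\omega^2}$, the equivalence ``$|\nabla u|\in L^\infty_\omega$ iff $A\in H^\infty_{\omega^2}$ and $(f_1/f_2)^\#\in L^\infty_\omega$'' from Theorem~\ref{theorem:c2} collapses: the condition on $A$ is automatic, so (i) $\Leftrightarrow$ (ii) is immediate. The remaining work is to splice in (iii) and (iv), and the natural cycle is (i) $\Leftrightarrow$ (iv) via the Laplacian identity, (ii) $\Leftrightarrow$ (iii) via an elementary pointwise comparison, after which one of these is already linked to (i).

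First I would establish (i) $\Leftrightarrow$ (iv). By Theorem~\ref{thm:repres}(i), $\Delta u = 4e^{-2u} = 4\big((f_1/f_2)^\#\big)^2$, so $\Delta u \in L^\infty_{\omega^2}$ says exactly $(f_1/f_2)^\# \in L^\infty_\omega$, i.e.\ (iv) $\Leftrightarrow$ (ii); combined with (i) $\Leftrightarrow$ (ii) already noted, all three of (i), (ii), (iv) are equivalent under the standing hypothesis $A\in H^\infty_{\omega^2}$. Actually this is cleaner than a four-cycle: (iv) is just a restatement of (ii) through Theorem~\ref{thm:repres}(i), with no analysis needed.

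Next I would treat (ii) $\Leftrightarrow$ (iii). Writing $E = |f_1|^2+|f_2|^2$, recall from the discussion after Theorem~\ref{thm:repres} that $\partial u = (f_1'\overline{f_1}+f_2'\overline{f_2})/E$, hence $|\nabla u| = 2|\partial u| \leq 2(|f_1'||f_1|+|f_2'||f_2|)/E \le 2(|f_1'|+|f_2'|)/(|f_1|+|f_2|)$, using $|f_j|\le |f_1|+|f_2|$ and $E \ge \tfrac12(|f_1|+|f_2|)^2$. This gives (iii) $\Rightarrow$ (i), hence (iii) $\Rightarrow$ (ii). For the reverse, one exploits $W(f_1,f_2)=f_1f_2'-f_1'f_2$ being a nonzero constant: estimating $|f_1'|+|f_2'|$ requires controlling the ``denominator direction'' as well, and here Theorem~\ref{theorem:c2} enters, because $(f_1/f_2)^\# = |W|/E \in L^\infty_\omega$ means $E \ge c\,\omega$ up to the bound on $(f_1/f_2)^\#$, while $|\nabla u|\in L^\infty_\omega$ controls $|f_1'\overline{f_1}+f_2'\overline{f_2}|$; combining these with the Wronskian relation recovers a bound on each $f_j'$ relative to $|f_1|+|f_2|$. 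This is the step I expect to be the main obstacle: the forward inequality (iii) $\Rightarrow$ (ii) is a one-line pointwise estimate, but (ii) $\Rightarrow$ (iii) is not purely algebraic — one needs that the two ``coordinates'' $(f_1',f_2')$ cannot both be large while the spherical derivative stays small, which is exactly the kind of rigidity the constant Wronskian provides, and one must be careful that the bounds come out in terms of $\omega$ and not a worse weight.

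Finally I would assemble the equivalences: (i) $\Leftrightarrow$ (ii) $\Leftrightarrow$ (iv) from the first two paragraphs, and (ii) $\Leftrightarrow$ (iii) from the third, closing the loop. Throughout I would keep in mind that the hypothesis $\sup_{z\in\D}\omega(z)/(1-|z|)<\infty$ and the regularity of $\omega$ are needed precisely so that Theorem~\ref{theorem:c2} applies verbatim; no new use of these hypotheses beyond that invocation should be required, since everything else is pointwise.
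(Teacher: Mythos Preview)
Your equivalences (i)$\Leftrightarrow$(ii) via Theorem~\ref{theorem:c2}, (ii)$\Leftrightarrow$(iv) via Theorem~\ref{thm:repres}(i), and (iii)$\Rightarrow$(i) via the pointwise bound $|\nabla u|=2|\partial u|\le 2(|f_1'|+|f_2'|)/(|f_1|+|f_2|)$ match the paper's proof exactly. The only divergence is in the direction (i)$\Rightarrow$(iii). The paper does not use the Wronskian here; it invokes Lemma~\ref{lemma:derprop}, which applies Cauchy's integral formula on discs $D(z,c\,\omega(z))$ and uses the regularity of $\omega$ together with the smoothness estimate of Lemma~\ref{lemma:smoothness}. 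Your pointwise Wronskian idea is a genuinely different route, and it works more cleanly than you feared: the relations $f_1'\overline{f_1}+f_2'\overline{f_2}=E\,\partial u$ and $f_1f_2'-f_2f_1'=W$ form a linear system in $f_1',f_2'$ with determinant $-E$, and solving gives $|f_1'|+|f_2'|\le (|f_1|+|f_2|)\big(|\partial u|+|W|/E\big)$, that is,
\[
\frac{|f_1'|+|f_2'|}{|f_1|+|f_2|}\ \le\ \tfrac12\,|\nabla u|+(f_1/f_2)^\#.
\]
So (iii) follows pointwise from (i) and (ii) combined, with the correct weight and no integration at all. Your approach thus bypasses Lemma~\ref{lemma:derprop} and uses the regularity hypothesis on $\omega$ only through the appeal to Theorem~\ref{theorem:c2}; the paper's route has the advantage that Lemma~\ref{lemma:derprop} yields higher-derivative estimates as well, but for the corollary as stated your argument is more direct.
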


If $\omega(z)=1-|z|^2$, $z\in\D$, then Corollary~\ref{cor:c2}
provides a~complete description of those differential equations~\eqref{eq:de2} for $A\in H^\infty_2$,
where the quotient of two linearly independent solutions is normal.
Such characterizations are important in oscillation theory. Since normal
functions are Lipschitz-continuous, 
as mappings from~$\D$ equipped with the hyperbolic metric
to the Riemann sphere equipped with the chordal metric, 
the normality of $f_1/f_2$ implies that its
the zeros and poles (which correspond to the zeros of $f_1$ and $f_2$, respectively)
are separated in the hyperbolic metric.
Finally, we point out that
Corollary~\ref{cor:c2}(iii) does not extend 
to higher derivatives, since there are differential equations \eqref{eq:de2} with $A\in\H(\D)$ and
$|A| = (|f_1''|+|f_2''|)/(|f_1|+|f_2|)\in L^\infty_2$
such that the quotient $f_1/f_2$ of linearly independent solutions $f_1,f_2$ is non-normal;
see \cite{L:1973} and Theorem~\ref{thm:steinmetz}.

%%%%%%%%%%%%%%%%%%%%%%%%%%
%%%% ---- SUBSECTION ---- %%%%
%%%%%%%%%%%%%%%%%%%%%%%%%%

\subsection{Prescribed fixed points} \label{sec:fixed}

The point $z_0\in\D$ is said to be a fixed point of $f\in\H(\D)$ if $f(z_0)=z_0$.
There are a~lot of known results according to which
zeros and critical points (i.e., zeros of the derivative) can be prescribed for solutions 
of~\eqref{eq:de2} for $A\in\H(\D)$. See \cite{G:2017, GH:2012, H:2011, H:2013} among many others. For example, the proof of
Theorem~\ref{thm:steinmetz} depends on such an~argument.
It turns out that fixed points can be prescribed for a~solution of~\eqref{eq:de2} under
the coefficient condition $A\in\H(\D)$
in such a way that all solutions of the differential equation remain bounded. Such differential equations were studied in detail 
in Section~\ref{sec:bdd}.

%%%%%%%%%%%%%%%%%%%%%%%%
%%%% ---- THEOREM ---- %%%%
%%%%%%%%%%%%%%%%%%%%%%%%

\begin{theorem} \label{thm:fixedpoints}
Let $\Lambda\subset\D$ be a~Blaschke sequence, and let $0<\varepsilon<1$.
Then, there exists a~coefficient $A\in \H(\D)$ such that
$|A(z)|^2(1-|z|^2)^3\, dm(z)$ is a~Carleson measure;
the differential equation \eqref{eq:de2} admits a~solution $f$, which satisfies $\nm{f}_{H^\infty}< 1 + \varepsilon$
and has fixed points $\{0\} \cup \Lambda$; all solutions of \eqref{eq:de2} are bounded.
\end{theorem}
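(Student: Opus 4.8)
The plan is to reduce the fixed-point problem to an interpolation problem of the type already solved in Theorem~\ref{thm:intp}. Observe that $z_0$ is a fixed point of $f$ precisely when $f(z_0)=z_0$; thus prescribing the fixed-point set $\{0\}\cup\Lambda$ amounts to requiring $f(0)=0$ and $f(z_n)=z_n$ for every $z_n\in\Lambda$. If $\Lambda$ were uniformly separated, this would be a direct instance of Theorem~\ref{thm:intp} with $w_n=z_n$ (a bounded target sequence), and we would additionally arrange $f(0)=0$; since $\{0\}\cup\Lambda$ remains a finite union of a point with a uniformly separated set, adding the single node $0$ does not disturb interpolation. The difficulty is that $\Lambda$ is only assumed to be a Blaschke sequence, not uniformly separated, and we also need the sup-norm bound $\nm{f}_{H^\infty}<1+\e$ rather than just boundedness.

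First I would handle the norm control. The Earl--\O yma interpolation machinery underlying Theorem~\ref{thm:intp} produces, for uniformly separated $\{z_n\}$ and bounded data $\{w_n\}$, a solution whose norm is controlled by $(\sup_n|w_n|)$ times a constant depending on the separation constant of the sequence. Here $\sup_n|z_n|\le 1$, so after passing to a sufficiently sparse subsequence (or by Earl's sharp form of the interpolation theorem, which gives an interpolating Blaschke product times a bounded factor with norm arbitrarily close to the data bound when the sequence is sufficiently separated) one can force the interpolating analytic function to have norm below $1+\e$. So the first reduction is: replace $\Lambda$ by a partition into finitely many uniformly separated pieces, or thin it, and arrange an $H^\infty$ function $f$ with $f(0)=0$, $f(z_n)=z_n$, $\nm{f}_{H^\infty}<1+\e$.

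The central step is then to realize this prescribed $f$ as a solution of \eqref{eq:de2} while keeping $|A(z)|^2(1-|z|^2)^3\,dm(z)$ a Carleson measure and all solutions bounded. This is exactly the mechanism inside the proof of Theorem~\ref{thm:intp}: having built a bounded $f$ with good structure (a bounded analytic function obtained by interpolation, typically of the form Blaschke product times an $H^\infty$ multiplier, hence with $f$ and a companion controllable), one sets $A=-f''/f$ on the zero-free part and more carefully uses a second linearly independent solution $g$ with $W(f,g)$ constant, so that $A = -f''/f$ makes sense globally after the standard reduction-of-order normalization, and the Carleson-measure property of $|A|^2(1-|z|^2)^3\,dm$ follows from the structural estimates on $f,f',f''$ that the interpolation construction guarantees (this is where $\nm{f}_{H^\infty}$ being finite, together with derivative bounds coming from the Carleson/interpolating-sequence side, enters). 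Boundedness of all solutions then follows as in Theorem~\ref{thm:intp}, e.g. from Theorem~\ref{theorem:new2}(vi) applied to $u=-\log(f/g)^{\#}$, or directly from the construction.

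The main obstacle I expect is the compatibility of the norm requirement $\nm{f}_{H^\infty}<1+\e$ with the Carleson-measure bound on $A$: shrinking the norm of $f$ toward $1$ forces one to work with a very specific (Earl-type) interpolant whose derivatives must still be controlled well enough that $|f''/f|^2(1-|z|^2)^3\,dm$ is Carleson, and one must check that thinning $\Lambda$ to gain separation (hence norm control) does not cost control elsewhere — but since $\{0\}\cup\Lambda$ only needs to be \emph{contained in} the fixed-point set is not required (the theorem asks for exactly this set), one must instead interpolate the value $z_n$ at \emph{all} points of the Blaschke sequence, so a decomposition of $\Lambda$ into boundedly many uniformly separated subsequences, with the interpolants combined, is the natural route, and verifying that the combined $f$ still has norm below $1+\e$ and yields a Carleson $A$ is the delicate point. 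I would address it by using Earl's theorem on each piece with norm bound $1+\e/2$ and the geometric decay afforded by separation to sum the pieces with small overlap, mirroring the argument already carried out for Theorem~\ref{thm:intp}.
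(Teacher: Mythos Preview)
Your approach has a genuine gap. A general Blaschke sequence \emph{cannot} be decomposed into finitely many uniformly separated subsequences; that property characterizes Carleson--Newman sequences, a strictly smaller class. So the reduction to Theorem~\ref{thm:intp} via such a partition is unavailable, and ``thinning'' $\Lambda$ is forbidden since the theorem requires every point of $\Lambda$ to be a fixed point. Even if a partition existed, it is unclear how you would combine interpolants on the pieces into a single solution of one equation $f''+Af=0$: the coefficient $A=-f''/f$ is determined by $f$, not linear in it, so summing solutions of different equations does not help.

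The paper sidesteps all of this with a one-line construction that you should compare against. Write $B=B_\Lambda$ and set
\[
f_1(z)=z+\varepsilon\, z^3 B(z).
\]
Then $f_1(z)=z$ exactly when $z^3B(z)=0$, i.e.\ on $\{0\}\cup\Lambda$; and $|z^3B(z)|\le |z|$ gives both $(1-\varepsilon)|z|\le|f_1(z)|\le(1+\varepsilon)|z|$, so $\nm{f_1}_{H^\infty}<1+\varepsilon$ and $f_1$ has a \emph{single} zero at the origin. Since $f_1''(0)=0$, the coefficient $A=-f_1''/f_1$ is analytic, and because $|f_1(z)|\ge(1-\varepsilon)\delta$ for $|z|>\delta$, one reads off the Carleson bound for $|A|^2(1-|z|^2)^3\,dm$ directly from the standard estimates on $B,B',B''$. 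A second bounded solution is obtained by the usual reduction-of-order integral. No interpolation, no separation hypothesis, and the norm bound comes for free.
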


If we assume that prescribed fixed points are uniformly separated, then we can go further and
dictate their type. In this paper, we make distinction between three different types:
the fixed point $z_0\in\D$ of $f\in \H(\D)$ is said to be attractive if $|f'(z_0)|<1$, 
neutral if $|f'(z_0)|=1$, and repulsive if $|f'(z_0)|>1$.

%%%%%%%%%%%%%%%%%%%%%%%%
%%%% ---- THEOREM ---- %%%%
%%%%%%%%%%%%%%%%%%%%%%%%

\begin{theorem} \label{thm:fixedpoints2}
Let $\Lambda\subset\D\setminus\{0 \}$ be uniformly separated.
Then, there exists a~coefficient $A\in \H(\D)$ such that
$|A(z)|^2(1-|z|^2)^3\, dm(z)$ is a~Carleson measure;
the differential equation \eqref{eq:de2} admits a bounded solution for which every point in $\Lambda$
is a~fixed point of prescribed type; all solutions of \eqref{eq:de2} are bounded.
\end{theorem}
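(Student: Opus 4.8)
The plan is to combine Theorem~\ref{thm:intp} with a~perturbation argument that upgrades a~solution $f$ satisfying $f(z_n)=z_n$ to one with prescribed derivative values $f'(z_n)$ at each $z_n\in\Lambda$, and then to observe that the classification into attractive/neutral/repulsive is a~purely local condition on $|f'(z_n)|$. First, I~would recast the problem: given the target type at each $z_n$, choose numbers $\lambda_n\in\C$ with $|\lambda_n|<1$, $|\lambda_n|=1$, or $|\lambda_n|>1$ accordingly, with $\sup_n|\lambda_n|<\infty$ and $\inf_n \dots$ bounded away from $0$ only where needed, so that the desired solution should satisfy the Hermite-type data $f(z_n)=z_n$ and $f'(z_n)=\lambda_n$ for all $n$. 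The goal is to realize this as an~interpolation problem for $H^\infty$ along the doubled sequence, and then feed it through the same Earl--\O yma machinery that underlies Theorem~\ref{thm:intp}.

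The key technical point is that Hermite interpolation (prescribing value and first derivative at each $z_n$) can be reduced to plain $0,1$-type interpolation when $\Lambda$ is uniformly separated: one solves it by writing $f = g + B_\Lambda^2 \cdot (\text{correction})$ where $B_\Lambda$ is the Blaschke product on $\Lambda$, so $B_\Lambda^2$ vanishes to order~$2$ at each $z_n$ and the residual freedom is an~ordinary bounded interpolation problem on $\Lambda$ (note $\Lambda^2$, the sequence $\Lambda$ counted twice, is still an~interpolating sequence for $H^\infty$ in the Hermite sense precisely because $\Lambda$ is uniformly separated; this is classical). Concretely I~would first pick a~bounded $g\in H^\infty$ with $g(z_n)=z_n$ and $g'(z_n)=\lambda_n$ using this standard Hermite interpolation on a~uniformly separated set, keeping $\nm{g}_{H^\infty}$ under control. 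Then I~invoke (the construction behind) Theorem~\ref{thm:intp}: there is $A\in\H(\D)$ with $|A(z)|^2(1-|z|^2)^3\,dm(z)$ a~Carleson measure such that \eqref{eq:de2} has a~bounded solution agreeing with the prescribed data. The only adjustment needed is that Theorem~\ref{thm:intp} as stated prescribes values $f(z_n)=w_n$; I~would either re-run its proof with the Hermite data in place of the pointwise data (the Earl--\O yma scheme accommodates a~finite jet at each node on a~uniformly separated sequence with no change in the Carleson-measure estimate), or absorb the derivative condition into a~second interpolating sequence slightly off $\Lambda$ and pass to a~limit.

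Once such a~bounded solution $f$ is produced with $f(z_n)=z_n$ (so each $z_n$ is a~fixed point) and $f'(z_n)=\lambda_n$, the type of the fixed point $z_n$ is attractive, neutral, or repulsive exactly according to whether $|\lambda_n|<1$, $=1$, or $>1$, which was arranged at the outset; and the boundedness of \emph{all} solutions of \eqref{eq:de2} is inherited from the corresponding conclusion in Theorem~\ref{thm:intp} (it follows from $A\in H^\infty_2$, which in turn follows from the Carleson-measure property together with subharmonicity of $|A|^2$, as noted in Section~\ref{sec:bdd}). I~would finish by remarking that the exclusion $0\notin\Lambda$ in the hypothesis is harmless and only avoids a~trivial clash with the fixed point $0$ that appears naturally in such constructions.

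\textbf{Main obstacle.} The delicate part is verifying that prescribing the first derivative (not just the value) at each node does not destroy the Carleson-measure bound on $|A(z)|^2(1-|z|^2)^3\,dm(z)$. In the Earl--\O yma approach one builds the solution as a~Blaschke-type product times a~bounded factor, and the coefficient $A$ is read off via the Bank--Laine / Schwarzian representation; adding the order-two vanishing of $B_\Lambda^2$ at each $z_n$ changes the distribution of zeros, and one must check that the resulting $A$ still satisfies the Carleson condition. Since $\Lambda$ is uniformly separated, the doubled sequence $\Lambda^2$ is a~finite union of uniformly separated sequences, so the relevant Blaschke product remains well-behaved and the estimates go through, but this bookkeeping — rather than any new idea — is where the real work lies.
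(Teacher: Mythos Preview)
Your plan has two concrete problems. First, the parenthetical claim that boundedness of all solutions ``follows from $A\in H^\infty_2$'' is simply false: the introduction notes explicitly that $A\in H^\infty_2$ only places solutions in the Korenblum space, and even arbitrarily small $\nm{A}_{H^\infty_2}$ permits unbounded solutions. In Theorem~\ref{thm:intp} boundedness of the second solution is obtained from the representation \eqref{eq:solrep} together with a lower bound on $|f_1|$ off a controlled set, not from any coefficient condition. Second, and more seriously, the central construction is never carried out. A Hermite-interpolating $g\in H^\infty$ with $g(z_n)=z_n$ and $g'(z_n)=\lambda_n$ will generically have zeros in $\D$, so $A=-g''/g$ is not analytic; and if instead you try to adapt the Earl--\O yma construction of Theorem~\ref{thm:intp} to also prescribe $f'(z_n)$, the auxiliary function $g$ there must now satisfy conditions at both $\Lambda$ and the shifted sequence $\Lambda^\star$ with $\zeta_n\in\Delta_p(z_n,\eta)$ for small $\eta$, and $\Lambda\cup\Lambda^\star$ is \emph{not} uniformly separated. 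Your remark about the doubled sequence $\Lambda^2$ concerns repeated points (relevant for Hermite interpolation alone), not two nearby distinct sequences, so it does not rescue this.

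The paper sidesteps all of this with one idea you do not mention: take the solution to be \emph{zero-free}. Set $f_1=\exp(h+Bg)$ where $h,g\in H^\infty$ solve the ordinary Carleson interpolation problems $h(z_n)=\log z_n$ and $g(z_n)=(C_n/z_n-h'(z_n))/B'(z_n)$, with $C_n\in\{1/2,1,2\}$ encoding the prescribed type. Then $f_1(z_n)=z_n$, $f_1'(z_n)=C_n$, and $A=-f_1''/f_1=-(h+Bg)''-\big((h+Bg)'\big)^2$ is automatically analytic; the Carleson-measure estimate is immediate from $h,B,g\in H^\infty$, and since $|f_1|$ is bounded above and away from zero, the second solution $f_2=f_1\int_\alpha^z f_1^{-2}$ is bounded directly. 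The hypothesis $0\notin\Lambda$ is exactly what makes $\log z_n$ and $C_n/z_n$ bounded, not the cosmetic point you suggest.
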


Theorem~\ref{thm:fixedpoints2} has a~natural counterpart in the setting of Nevanlinna
interpolating sequences. Note that Theorem~\ref{thm:fixedpoints} is valid for
sequences $\Lambda\in\Int\NC$ as it is.

%%%%%%%%%%%%%%%%%%%%%%%%
%%%% ---- THEOREM ---- %%%%
%%%%%%%%%%%%%%%%%%%%%%%%

\begin{theorem} \label{thm:fixedpoints2N}
Let $\Lambda\subset\D\setminus\{0 \}$ and $\Lambda \in \Int\NC$.
Then, there exists a~coefficient $A\in \H(\D)$ and $H\in\hp(\D)$ such that
$|A(z)|^2(1-|z|^2)^2\leq e^{H(z)}$, $z\in\D$, and~\eqref{eq:de2} admits a~solution for which every point in $\Lambda$
is a~fixed point of prescribed type.
\end{theorem}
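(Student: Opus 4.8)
The plan is to mirror the proof of Theorem~\ref{thm:fixedpoints2}, replacing the classical interpolation machinery for $H^\infty$ and uniformly separated sequences by its Nevanlinna counterpart from \cite{HMNT:2004, HMN:preprint, MNT:preprint}, just as Theorems~\ref{thm:n1}--\ref{thm:n3} were obtained from their bounded-solution analogues. Concretely, $\Lambda=\{z_n\}\in\Int\NC$ means (by \cite[Theorem~1.2]{HMNT:2004}) that $\Lambda$ satisfies the separation condition~\eqref{eq:nevsep} for some $h_0\in\hp(\D)$, and the trace of $\NC$ on $\Lambda$ contains every bounded sequence; in particular $\Lambda$ is a Blaschke sequence, so the Blaschke product $B=B_\Lambda$ is well-defined and lies in $\NC$ (indeed $H^\infty$). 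The construction of a solution of \eqref{eq:de2} having a prescribed zero-set is exactly Theorem~\ref{thm:n3}, and prescribing a value $f(z_0)=w_0$ at an interpolating node is the Nevanlinna analogue of Earl's result, so the real content is: (a) arrange that each $z_n\in\Lambda$ is a fixed point, i.e.\ $f(z_n)=z_n$; and (b) arrange the prescribed \emph{type}, i.e.\ control $|f'(z_n)|$ at each node so that it is $<1$, $=1$, or $>1$ as dictated.

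First I would set up the target function. Choose a bounded sequence $\{w_n\}$ with $w_n=z_n$ (this is legitimate since $|z_n|<1$), and choose a second bounded sequence $\{v_n\}\subset\C$ encoding the desired derivative: for an attractive node take $|v_n|$ small, for a neutral node $|v_n|=1$, for a repulsive node $|v_n|$ large, but all bounded (boundedness is all $\Int\NC$-interpolation allows, and it suffices because $z_n\in\Lambda\subset\D\setminus\{0\}$ stays away from $0$, so $|f'(z_n)|$ will differ from $|v_n|$ only by a factor comparable to $|z_n|$ which is bounded and bounded away from $0$ when $\Lambda$ is $h$-separated — this is the reason the hypothesis $\Lambda\subset\D\setminus\{0\}$ appears, exactly as in Theorem~\ref{thm:fixedpoints2}). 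Using the Nevanlinna free-interpolation theorem one produces $g\in\NC$ with $g(z_n)=z_n$ and, via a Hermite-type interpolation on the divided $1$-jets (interpolating simultaneously values and derivatives, which for a free interpolating sequence reduces to two ordinary interpolation problems after dividing by $B$ and $B^2$ respectively), one gets the jet data $g(z_n)=z_n$, $g'(z_n)=v_n$ realized by some $g$ whose Nevanlinna norm is controlled by a positive harmonic function. Then I would feed $g$ into the differential-equation construction: as in the proofs of Theorems~\ref{thm:n3} and~\ref{thm:fixedpoints2}, build $A\in\H(\D)$ for which \eqref{eq:de2} has a solution $f$ that agrees with $g$ at $\Lambda$ up to first order (one typically writes $f=g+B^2\cdot(\text{correction})$ and solves for $A=-f''/f$, checking $f$ is zero-free where needed, or alternatively one uses the representation $A=-\partial^2u-(\partial u)^2$ of Theorem~\ref{thm:repres}). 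The solution $f$ inherits $f(z_n)=z_n$ and $f'(z_n)=v_n+O(\text{vanishing to order}\geq1)$, hence $|f'(z_n)|$ has the prescribed position relative to $1$ after the bounded correction is absorbed into the choice of $|v_n|$.

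The hard part will be producing the Carleson-measure estimate $|A(z)|^2(1-|z|^2)^2\leq e^{H(z)}$ with $H\in\hp(\D)$ \emph{simultaneously} with the jet interpolation. In Theorem~\ref{thm:fixedpoints2} the analogous bound was a genuine Carleson measure because everything was built from uniformly separated data and $H^\infty$ interpolants; here the $\NC$-interpolants only satisfy the weaker "$\leq e^{h}$, $h\in\hp$" type estimates, and one must track how the harmonic majorant of $\log^+|g|$, of $\log^+|g'|(1-|z|^2)$ (boundedness of the jet), and of the separation function $h_0$ combine through the construction of $A$. I would handle this by invoking Theorem~\ref{thm:n2}: arrange that the two solutions $f_1,f_2$ of the constructed equation satisfy the lower bound \eqref{eq:toloN} with some $h\in\hp(\D)$ — this is where the $h$-separation of $\Lambda$ (Proposition~\ref{prop:n4}) and the control on $g,g'$ at $\Lambda$ enter, interpolated outward via a $\bar\partial$-argument or via a Nevanlinna-norm-controlled partition-of-unity estimate — and then Theorem~\ref{thm:n2} upgrades the a priori bound $|A(z)|(1-|z|^2)^3\leq e^{H(z)}$ (from Theorem~\ref{thm:n1}, since both solutions lie in $\NC$) to the desired $|A(z)|(1-|z|^2)^2\leq e^{H(z)}$, whence $|A(z)|^2(1-|z|^2)^2\leq e^{2H(z)}$ and $2H\in\hp(\D)$. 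The remaining routine points — that \emph{all} manipulations preserve membership in $\NC$, that $f$ can be taken zero-free near $\Lambda$ so $f'(z_n)\neq0$ is meaningful for the "repulsive/neutral/attractive" trichotomy, and that the node $0\notin\Lambda$ restriction removes the only place where $|f'(z_n)|$ could be forced to $0$ — I would dispatch exactly as in Section~\ref{sec:fixed}'s treatment of Theorems~\ref{thm:fixedpoints} and~\ref{thm:fixedpoints2}, with $\hp(\D)$-valued bounds replacing the Carleson-measure ones throughout.
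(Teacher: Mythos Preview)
Your overall strategy --- mirror the proof of Theorem~\ref{thm:fixedpoints2}, replacing $H^\infty$-interpolation on a uniformly separated sequence by $\NC$-interpolation on $\Lambda\in\Int\NC$ --- is exactly what the paper does. But your execution misses the central construction and replaces it with something that does not work.

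The paper does \emph{not} build an interpolant $g\in\NC$ with prescribed $1$-jets $g(z_n)=z_n$, $g'(z_n)=v_n$, and then try to correct it additively into a solution of \eqref{eq:de2}. Instead, exactly as in Theorem~\ref{thm:fixedpoints2}, it writes the solution as a \emph{zero-free exponential}
\[
f=\exp(h+Bg),
\]
where $h\in\NC$ solves the single value-interpolation problem $h(z_n)=\log z_n$, and $g\in\NC$ solves the single value-interpolation problem $g(z_n)=\big(C_n/z_n-h'(z_n)\big)/B'(z_n)$ with $C_n\in\{1/2,1,2\}$ encoding the prescribed type. Then $f(z_n)=z_n$ and $f'(z_n)=z_n\big(h'(z_n)+B'(z_n)g(z_n)\big)=C_n$ automatically. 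No Hermite interpolation is needed: the derivative condition is absorbed into a \emph{value} condition for $g$, and the separation estimate \eqref{eq:nevsep} together with \eqref{eq:nevder} shows the target sequence for $g$ lies in $\NC\mid\Lambda$.

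Because $f$ is zero-free, $A=-f''/f=-\big((h+Bg)'\big)^2-(h+Bg)''$ is analytic in $\D$, and the bound $|A(z)|(1-|z|^2)^2\leq e^{H(z)}$ for some $H\in\hp(\D)$ follows immediately from \eqref{eq:nevder} applied to $h+Bg\in\NC$. Your plan to reach the coefficient estimate via Theorem~\ref{thm:n2} is circular: that theorem requires two linearly independent solutions in $\NC$ satisfying the lower bound \eqref{eq:toloN}, none of which you have produced, and in any case is unnecessary once the exponential form of $f$ is used. (Also, the target bound is a pointwise estimate by $e^H$ with $H\in\hp(\D)$, not a Carleson-measure condition.) The additive ansatz $f=g+B^2\cdot(\text{correction})$ you sketch gives no control on the zeros of $f$, so $-f''/f$ need not be analytic; this is precisely the obstacle the exponential construction sidesteps.
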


%%%%%%%%%%%%%%%%%%%%%%%
%%%% ---- SECTION ---- %%%%
%%%%%%%%%%%%%%%%%%%%%%%

\section{Proof of Theorem~\ref{thm:converse}}

The following argument is based on concrete construction.

%%%%%%%%%%%%%%%%%%%%%%
%%%% ---- PROOF ---- %%%%
%%%%%%%%%%%%%%%%%%%%%%

\begin{proof}[Proof of Theorem~\ref{thm:converse}]
(i) Let $0<p<1/2$, and
\begin{equation*}
f_1(z)=\exp\bigg( i \cdot \frac{p}{2\pi} \bigg( \log \frac{2i}{1-z} \bigg)^2 \, \bigg), \quad z\in\D.
\end{equation*}
Note that the function $z\mapsto 2i/(1-z)$ maps $\D$ onto $\{z\in\C: \Imag z > 1\}$.
Since
\begin{equation*}
\Real\bigg(  i \cdot \frac{p}{2\pi} \bigg( \log \frac{2i}{1-z} \bigg)^2 \, \bigg)
   = - \frac{p}{\pi} \, \log\frac{2}{|1-z|} \, \arg \frac{2i}{1-z}, \quad z\in\D,
\end{equation*}
we deduce $2^{-p} (1-|z|)^p \leq |f_1(z)| \leq 1$ for $z\in\D$.
Since $f_1$ is zero-free, we conclude $A=-f_1''/f_1 \in \H(\D)$. Moreover, $A\not\in H^\infty_2$ because
\begin{equation*}
A(z) = p \, \frac{p\big( \log\frac{2i}{1-z} \big)^2 - i \pi  \log\frac{2i}{1-z}-i\pi}{\pi^2 (1-z)^2}, \quad z\in\D.
\end{equation*}

It remains to show that \emph{all} solutions of \eqref{eq:de2} are bounded.
Note that 
\begin{equation} \label{eq:second}
f_2(z) = f_1(z) \, \int_0^z \frac{1}{f_1(\zeta)^2} \, d\zeta, \quad z\in\D,
\end{equation}
is a~bounded solution of \eqref{eq:de2}, and $f_2$ is linearly independent to $f_1$.
Here we integrate along the straight line segment.
This completes the proof of (i), since every solution of \eqref{eq:de2} is
a~linear combination of $f_1,f_2$.

(ii) Let $0<p<1/2$, and 
\begin{equation*}
f_1(z)=\exp\bigg( i \cdot \frac{p}{\pi} \bigg( \log \frac{1+z}{1-z} \bigg)^2 \, \bigg), \quad z\in\D.
\end{equation*}
Similar function has been utilized in \cite[pp.~142--143]{L:1978}. 
We point out that $f_1$ has asymptotic
values $0$ and $\infty$ at $z=1$, and hence $f_1$ is not normal. This fact alone
implies that the zero-free function $f_1$ cannot be a~solution of \eqref{eq:de2}
for $A\in H^\infty_2$; see \cite[Proposition~7]{GNR:2018}. As in the part (i), we deduce
\begin{equation} \label{eq:bounds}
\left( \frac{1-|z|}{1+|z|} \right)^p \leq |f_1(z)| \leq \left( \frac{1+|z|}{1-|z|} \right)^p, \quad z\in\D.
\end{equation}
Since $f_1$ is zero-free, we conclude $A=-f_1''/f_1 \in \H(\D)$. Moreover, $A\not\in H^\infty_2$ as
\begin{equation*}
A(z) = 8p \, \frac{2p\big( \log\frac{1+z}{1-z} \big)^2 - i \pi z \log\frac{1+z}{1-z}-i\pi}{\pi^2 (1-z^2)^2}, \quad z\in\D.
\end{equation*}

It remains to show that \emph{all} solutions of \eqref{eq:de2} belong to $H^\infty_p$.
On one hand, it is clear that $f_1\in H^\infty_p$ by \eqref{eq:bounds}. On the other hand,
\eqref{eq:second}
is a solution of \eqref{eq:de2} which is linearly independent to $f_1$. 
Since $z \mapsto \int_0^z d\zeta/f_1(\zeta)^2$ 
is bounded in $\D$, we have $f_2\in H^\infty_p$. This completes the proof of Theorem~\ref{thm:converse}.
\end{proof}

%%%%%%%%%%%%%%%%%%%%%%%
%%%% ---- SECTION ---- %%%%
%%%%%%%%%%%%%%%%%%%%%%%

\section{Proof of Theorem~\ref{thm:converse_finalx}} \label{sec:ppp}

We offer two different proofs for Theorem~\ref{thm:converse_finalx}. 
We begin by considering a~more general result which implies Theorem~\ref{thm:converse_finalx}
as a corollary.
The following lemma indicates that any analytic function, which satisfies $H^\infty_\alpha$-type estimate
outside a~small exceptional set, actually belongs to $H^\infty_\alpha$.

%%%%%%%%%%%%%%%%%%%%%%%
%%%% ---- LEMMA ---- %%%%
%%%%%%%%%%%%%%%%%%%%%%%

\begin{lemma} \label{lemma:exc_set}
Let $f\in\H(\D)$ and $0\leq \alpha<\infty$. Then $f\in H^\infty_\alpha$,
if  there exist pairwise disjoint discs
$\Delta_p(z_n,\delta)$, $z_n\in\D$ and $0<\delta<1$, such that
\begin{equation} \label{eq:assump_est}
\sup \bigg\{ |f(z)| (1-|z|^2)^\alpha : z \in \D \setminus \bigcup_n \, \Delta_p(z_n,\delta) \bigg\} < \infty.
\end{equation}
\end{lemma}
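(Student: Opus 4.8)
The plan is to reduce the global estimate to the local estimate via subharmonicity of $|f|$ combined with the fact that the excluded pseudo-hyperbolic discs are pairwise disjoint, hence cannot trap too much of any hyperbolic ball. Fix $z\in\D$. If $z\notin\bigcup_n\Delta_p(z_n,\delta)$, there is nothing to prove by \eqref{eq:assump_est}. So assume $z\in\Delta_p(z_{n_0},\delta)$ for some $n_0$. The idea is to apply the sub-mean-value property of the subharmonic function $|f|$ over a pseudo-hyperbolic disc $\Delta_p(z,R)$ centered at $z$, with a fixed radius $R\in(\delta,1)$ chosen large enough (e.g.\ $R$ slightly less than $(2\delta)/(1+\delta^2)$, so that $R>\delta$ but the disc still has controlled size). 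After transporting to the origin by the automorphism $\varphi_z$ and using the sub-mean-value inequality on the Euclidean disc of radius $R$, one gets
\begin{equation*}
|f(z)| \leq \frac{1}{\pi R^2}\int_{\Delta_p(z,R)} |f(w)|\, dm_z(w),
\end{equation*}
where $dm_z$ is the pulled-back area measure; estimating $1-|w|^2 \asymp 1-|z|^2$ for $w\in\Delta_p(z,R)$ (with constants depending only on $R$, hence only on $\delta$), this yields
\begin{equation*}
|f(z)|(1-|z|^2)^\alpha \lesssim_{\delta,\alpha} \frac{1}{(1-|z|^2)^2}\int_{\Delta_p(z,R)} |f(w)|(1-|w|^2)^\alpha\, dm(w).
\end{equation*}

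The next step is to split the integral over $\Delta_p(z,R)$ into the part lying outside $\bigcup_n\Delta_p(z_n,\delta)$ and the parts lying inside the individual exceptional discs. On the good part, $|f(w)|(1-|w|^2)^\alpha$ is bounded by the constant $M$ from \eqref{eq:assump_est}, and $\int_{\Delta_p(z,R)} dm(w)\asymp_\delta (1-|z|^2)^2$, so this contributes $O_{\delta,\alpha}(M)$ to $|f(z)|(1-|z|^2)^\alpha$. For the bad part, only finitely many discs $\Delta_p(z_n,\delta)$ can meet $\Delta_p(z,R)$, and — here is where disjointness enters — the number of such $n$ is bounded by an absolute constant $N=N(\delta,R)$, since the $\Delta_p(z_n,\delta)$ are pairwise disjoint and all of comparable hyperbolic size, hence a bounded-hyperbolic-radius ball contains boundedly many of them. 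On each such disc one still needs to control $\int_{\Delta_p(z_n,\delta)}|f(w)|(1-|w|^2)^\alpha\, dm(w)$, and for that one applies the same sub-mean-value reasoning \emph{in reverse}: every point of $\Delta_p(z_n,\delta)$ whose pseudo-hyperbolic distance to the boundary of $\Delta_p(z_n,R')$ (a slightly larger concentric disc, still with $R'<1$) is at least $\delta$ lies outside all the exceptional discs...

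— actually the cleaner route is to bound $\sup_{w\in\Delta_p(z_n,\delta)}|f(w)|(1-|w|^2)^\alpha$ directly by a sub-mean-value estimate over $\Delta_p(z_n,\delta')$ with $\delta<\delta'<1$: by the same computation as in the first paragraph, this sup is $\lesssim_{\delta'} (1-|z_n|^2)^{-2}\int_{\Delta_p(z_n,\delta')}|f|(1-|\cdot|^2)^\alpha\, dm$, and the integration domain $\Delta_p(z_n,\delta')$ has a definite fraction of its area outside $\Delta_p(z_n,\delta)$ — but that by itself does not rule out hitting \emph{other} exceptional discs. To close this cleanly I would instead argue that it suffices to establish a \emph{uniform a priori bound} $|f(w)|(1-|w|^2)^\alpha\le C$ for $w$ in a slightly shrunk family of discs $\Delta_p(z_n,\delta)$, and bootstrap: set $\Phi=\sup_{z\in\D}|f(z)|(1-|z|^2)^\alpha$ (finite a priori only if $f\in H^\infty_\alpha$, so work first on a fixed compact $|z|\le r$, get bounds independent of $r$, then let $r\to1$). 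The first-paragraph inequality gives, for $z$ in a bad disc, $|f(z)|(1-|z|^2)^\alpha\le C_1 M + C_2\,\Phi_r\cdot\theta$, where $\theta<1$ is the maximal fraction of the area of $\Delta_p(z,R)$ that can be covered by $\bigcup_n\Delta_p(z_n,\delta)$ — and $\theta<1$ precisely because disjointness forces the exceptional discs to be ``thin'' relative to the larger radius $R$. Taking the supremum over $z$ with $|z|\le r$ of both the good-set bound and this bad-set bound yields $\Phi_r\le C_1M+C_2\theta\Phi_r$, hence $\Phi_r\le C_1M/(1-C_2\theta)$ uniformly in $r$; letting $r\to1$ gives $f\in H^\infty_\alpha$.

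The main obstacle, and the step requiring care, is the geometric/combinatorial fact that one can choose the radii $\delta<R<1$ so that the fraction $\theta$ of $\Delta_p(z,R)$ coverable by a disjoint family $\{\Delta_p(z_n,\delta)\}$ is strictly less than $1$ (with $\theta$ independent of $z$), and simultaneously so that $1-|w|^2\asymp 1-|z|^2$ and $m(\Delta_p(z,R))\asymp(1-|z|^2)^2$ with constants depending only on $\delta$. The first point is where pairwise disjointness is genuinely used: a single disc $\Delta_p(z_n,\delta)$ has bounded hyperbolic diameter $2\log\frac{1+\delta}{1-\delta}$, and if $R$ is taken close to $1$ the hyperbolic ``collar'' $\Delta_p(z,R)\setminus\bigcup\Delta_p(z_n,\delta)$ retains a fixed fraction of the area no matter how the centers $z_n$ are arranged, because only finitely many (boundedly many) of the disjoint $\delta$-discs fit inside $\Delta_p(z,R)$ and each occupies a fixed area fraction $< 1/N$. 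One must verify the counting bound $N=N(\delta,R)$ carefully — it follows from comparing areas: $N\cdot m(\Delta_p(z_1,\delta)) \le m(\Delta_p(z,R'))$ for $R'$ slightly larger than $R$, after noting each $\Delta_p(z_n,\delta)$ meeting $\Delta_p(z,R)$ is contained in $\Delta_p(z,R')$ — so $\theta \le N\cdot\sup_n m(\Delta_p(z_n,\delta))/m(\Delta_p(z,R))$, and one then picks $R$ so that this product is, say, $\le 1/2$. Everything else is routine change-of-variables and the sub-mean-value property.
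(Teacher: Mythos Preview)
The paper's proof is a one-liner via the maximum modulus principle, which you have overlooked entirely. If $z\in\Delta_p(z_{n_0},\delta)$, then the maximum of $|f|$ over the closed disc $\overline{\Delta_p(z_{n_0},\delta)}$ is attained at some boundary point $\zeta\in\partial\Delta_p(z_{n_0},\delta)$. Pairwise disjointness of the \emph{open} discs forces $\zeta\notin\bigcup_n\Delta_p(z_n,\delta)$, so $\zeta$ lies in the good set. Hence $|f(z)|(1-|z|^2)^\alpha\le |f(\zeta)|(1-|z|^2)^\alpha\lesssim_\delta|f(\zeta)|(1-|\zeta|^2)^\alpha\le S$, using only the standard comparison $1-|z|^2\asymp_\delta 1-|\zeta|^2$. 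That is the entire argument.

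Your sub-mean-value bootstrap might be salvageable in principle, but as written the key step does not close. You need the effective coefficient $C_2\theta$ of $\Phi_r$ in the recursion $\Phi_r\le C_1M+C_2\theta\,\Phi_r$ to be strictly less than $1$. However, your bound $\theta\le N\cdot\sup_n m(\Delta_p(z_n,\delta))/m(\Delta_p(z,R))$, combined with your counting estimate $N\le m(\Delta_p(z,R'))/\inf_n m(\Delta_p(z_n,\delta))$, yields only $\theta\lesssim m(\Delta_p(z,R'))/m(\Delta_p(z,R))$ --- and since $R'>R$, this is at least $1$, hence vacuous. Obtaining $\theta<1$ genuinely requires a packing-density argument (disjoint discs cannot tile without gaps), which is additional work you have not supplied; and even granting that, one must still check that the distortion constant $C_2=C_2(R,\alpha)$, which blows up as $R\to 1^-$, does not swamp the gain. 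The maximum-modulus route sidesteps all of this.
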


%%%%%%%%%%%%%%%%%%%%%%
%%%% ---- PROOF ---- %%%%
%%%%%%%%%%%%%%%%%%%%%%

\begin{proof}
Let $z\in \Delta_p(z_n,\delta)$ for some $n$,
and let $S$ be the supremum in~\eqref{eq:assump_est}.
By the maximum modulus principle, there exists
$\zeta\in \partial\Delta_p(z_n,\delta)$ such that 
$|f(\zeta)| = \max\big\{ |f(\xi)| : \xi\in\overline{\Delta_p(z_n,\delta)}\big\}$. By the standard estimates,
there exists a~constant $C=C(\delta)$ with $0<C<\infty$ such that 
\begin{equation*}
  |f(z)| (1-|z|^2)^\alpha 
  \leq |f(\zeta)| (1-|z|^2)^\alpha 
  \leq C^\alpha |f(\zeta)| (1-|\zeta|^2)^\alpha 
  \leq C^\alpha S.
\end{equation*}
The assertion $f\in H^\infty_\alpha$ follows.
\end{proof}

Recall that the space $\BMOA$ consists of those functions in $H^2$
whose boundary values have bounded mean oscillation on $\partial\D$,
or equivalently, of those functions $f\in\H(\D)$
for which $|f'(z)|^2(1-|z|^2)\, dm(z)$ is a~Carleson measure. We write
$\nm{f}_{\rm BMOA}^2 = \sup_{a\in\D} \, \nm{f_a}_{H^2}^2$ where $f_a(z)=f(\varphi_a(z)) - f(a)$ for $a,z\in\D$.

%%%%%%%%%%%%%%%%%%%%%%%%
%%%% ---- THEOREM ---- %%%%
%%%%%%%%%%%%%%%%%%%%%%%%

\begin{theorem} \label{thm:converse_final}
If $f_1,f_2\in \mathcal{B}$ are linearly independent solutions of \eqref{eq:de2} for $A\in\H(\D)$,
and there exist pairwise disjoint discs 
$\Delta_p(z_n,\delta)$, $z_n\in\D$ and $0<\delta<1$, with
\begin{equation} \label{eq:exc_ind}
\inf\bigg\{ |f_1(z)| + |f_2(z)| : z \in \D \setminus \bigcup_n \, \Delta_p(z_n,\delta) \bigg\} > 0,
\end{equation}
then $A\in H^\infty_2$. If $f_1,f_2\in\BMOA$ and the sequence $\{z_n\}\subset\D$ in \eqref{eq:exc_ind} is uniformly separated,
then $|A(z)|^2(1-|z|^2)^3\, dm(z)$ is a~Carleson measure.
\end{theorem}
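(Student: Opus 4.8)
The plan is to use the Bank–Laine–type representation of $A$ in terms of $E=f_1f_2$ together with the Wronskian, and to exploit the hypothesis \eqref{eq:exc_ind} to control $1/E$ away from the exceptional discs. Concretely, recall that $W=W(f_1,f_2)$ is a nonzero constant and
\begin{equation*}
A = -\frac{f_1''}{f_1} = -\frac{f_2''}{f_2},
\end{equation*}
so on the set $\{|f_1|\geq |f_2|\}$ one estimates $|A|$ using $f_1$ and the Bloch (resp. $\BMOA$) bound on $f_1'$, and symmetrically on $\{|f_2|\geq|f_1|\}$. The point is that $f_1,f_2\in\B$ gives $|f_j'(z)|(1-|z|^2)\leq C$, and differentiating once more via Cauchy's estimate on a pseudo-hyperbolic disc gives $|f_j''(z)|(1-|z|^2)^2\leq C$. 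Hence for $z\notin\bigcup_n\Delta_p(z_n,\delta)$, where $|f_1(z)|+|f_2(z)|\geq\kappa>0$, we get $|A(z)|(1-|z|^2)^2 = |f_j''(z)|/|f_j(z)|\cdot(1-|z|^2)^2\leq C/\kappa$ choosing $j$ with $|f_j(z)|\geq\kappa/2$. This establishes the $H^\infty_2$-estimate off the exceptional set, and Lemma~\ref{lemma:exc_set} (with $\alpha=2$, using that the $\Delta_p(z_n,\delta)$ are pairwise disjoint) upgrades it to $A\in H^\infty_2$ globally. The same computation shows $|A(z)|(1-|z|^2)^2$ is bounded off the discs, which is exactly the hypothesis \eqref{eq:assump_est} we need for the second part.

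For the second part, assume $f_1,f_2\in\BMOA$ and $\{z_n\}$ uniformly separated. The target is that $d\mu(z)=|A(z)|^2(1-|z|^2)^3\,dm(z)$ is a Carleson measure, i.e. $\sup_{a\in\D}\int_\D|\varphi_a'(z)|\,d\mu(z)<\infty$ by the characterization quoted before Theorem~\ref{thm:converse_finalx}. I would split the integral according to whether $z$ lies in $\bigcup_n\Delta_p(z_n,\delta)$ or not. Off the discs we already have $|A(z)|(1-|z|^2)^2\leq C/\kappa$, so there
\begin{equation*}
|A(z)|^2(1-|z|^2)^3 \leq \frac{C^2}{\kappa^2}\cdot\frac{(1-|z|^2)^3}{(1-|z|^2)^4}\cdot(1-|z|^2)^2 \cdot\frac{1}{1-|z|^2},
\end{equation*}
which is not integrable against $|\varphi_a'|\,dm$ by itself — so I instead keep one factor $|A(z)|(1-|z|^2)\leq C'$ from the $\BMOA$ (hence Bloch) bound and recognize the remaining factor $|A(z)|(1-|z|^2)^2\,dm(z)\cdot(1-|z|^2)^{-1}$; the cleaner route is to note $|A(z)|^2(1-|z|^2)^3 \le (C/\kappa)\,|A(z)|(1-|z|^2)$ off the discs, and $|A(z)|(1-|z|^2)\,dm(z)$ need not be Carleson either. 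The correct bookkeeping: use the $\BMOA$ hypothesis to say $|f_j'(z)|^2(1-|z|^2)\,dm(z)$ is Carleson, and via the Bank–Laine identity express $|A|^2(1-|z|^2)^3$ in terms of $|f_j''/f_j|^2(1-|z|^2)^3$; on the good set $|f_j(z)|\gtrsim\kappa$, while $|f_j''(z)|^2(1-|z|^2)^3\,dm(z)$ is Carleson because $f_j'\in\BMOA$ implies $f_j''$ satisfies the corresponding second-order Carleson condition (equivalently $f_j'\in\B$ gives $|f_j''(z)|(1-|z|^2)^2$ bounded, and $f_j'\in\BMOA$ gives the measure bound). So off the discs $d\mu\lesssim\kappa^{-2}|f_j''(z)|^2(1-|z|^2)^3\,dm(z)$, a Carleson measure.

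It remains to handle the part of $\mu$ supported in $\bigcup_n\Delta_p(z_n,\delta)$, and this is where the uniform separation of $\{z_n\}$ enters and is the main obstacle. On each disc $\Delta_p(z_n,\delta)$ one does not have a pointwise bound on $|A|$, but one has it on the boundary (from the first part, $|A|(1-|z|^2)^2\leq C/\kappa$ on $\partial\Delta_p(z_n,\delta)$ since the boundary misses the open discs — here I need the discs to be, say, slightly shrunk, or to use that \eqref{eq:exc_ind} with radius $\delta$ also controls an annulus, a standard adjustment). Then I would estimate $\int_{\Delta_p(z_n,\delta)}|A(z)|^2(1-|z|^2)^3\,dm(z)$ by a constant times $(1-|z_n|^2)^2$: indeed on the fixed-size disc $|A(z)|^2(1-|z|^2)^4\leq M^2$ for some $M$ depending on the boundary bound (via a subharmonicity/maximum-principle argument for $|A|^2$, noting $(1-|z|^2)^{-4}$ is comparable to $(1-|z_n|^2)^{-4}$ on the disc), so the integral is $\lesssim (1-|z_n|^2)^{-4}\cdot(1-|z_n|^2)^3\cdot m(\Delta_p(z_n,\delta))\lesssim(1-|z_n|^2)^{-4}\cdot(1-|z_n|^2)^3\cdot(1-|z_n|^2)^2 = (1-|z_n|^2)$. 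Wait — that over-counts; the sharp bound wanted is that $\sum_n(1-|z_n|)\delta_{z_n}$ being Carleson (which is part of uniform separation) transfers to $\mu\!\restriction\!\bigcup\Delta_p$ being Carleson. So I would show $\int_{\Delta_p(z_n,\delta)}|A(z)|^2(1-|z|^2)^3\,dm(z)\leq C''(1-|z_n|^2)$, using the $\BMOA$ bound $|f_j'|^2(1-|z|^2)\,dm$ Carleson to get $\int_{\Delta_p(z_n,\delta)}|f_j''(z)|^2(1-|z|^2)^3\,dm(z)\lesssim(1-|z_n|^2)$ plus a lower bound $|f_j(z)|\gtrsim\kappa$ that persists into a definite fraction of the disc (shrinking $\delta$ if needed so that on $\Delta_p(z_n,\delta)$ one of $|f_1|,|f_2|$ stays bounded below — justified by the Bloch Lipschitz property, since $f_1,f_2$ cannot both be tiny on the boundary and one of them can drop by at most a controlled factor over a fixed pseudo-hyperbolic distance). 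Summing over $n$ and combining with the off-discs estimate, and invoking that $\{z_n\}$ uniformly separated means $\sum_n(1-|z_n|)\delta_{z_n}$ is Carleson, gives that $\mu$ is a Carleson measure, completing the proof. The delicate points I expect to fight with are: (a) arranging the pointwise lower bound $|f_1|+|f_2|\gtrsim\kappa$ to hold not just off the open discs but on a fixed-proportion shrink of each disc, and (b) making the passage from "$|f_j'|^2(1-|z|^2)\,dm$ Carleson" to the second-derivative measure estimate localized to each $\Delta_p(z_n,\delta)$ quantitatively clean.
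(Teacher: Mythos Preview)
Your approach is essentially the paper's. Part one is correct and identical: the identity $|A| = (|f_1''|+|f_2''|)/(|f_1|+|f_2|)$ together with the lower bound off the discs and Lemma~\ref{lemma:exc_set} gives $A\in H^\infty_2$. For part two, the split into on-discs and off-discs, and the off-discs estimate via the fact that $|f_j''(z)|^2(1-|z|^2)^3\,dm(z)$ is Carleson (this follows from $f_j\in\BMOA$, not from $f_j'\in\BMOA$ as you write at one point), also match the paper.

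The one place you lose your footing is the on-discs piece $I_2$. Your sentence ``On each disc $\Delta_p(z_n,\delta)$ one does not have a pointwise bound on $|A|$'' is false: you just proved $A\in H^\infty_2$ in part one, so $|A(z)|^2(1-|z|^2)^4\leq \nm{A}_{H^\infty_2}^2$ holds everywhere, including on the discs. Your subsequent computation
\[
\int_{\Delta_p(z_n,\delta)}|A(z)|^2(1-|z|^2)^3\,dm(z)\lesssim (1-|z_n|^2)
\]
is then correct and is exactly what the paper does; combined with the comparability $|\varphi_a'(z)|\asymp |\varphi_a'(z_n)|$ on $\Delta_p(z_n,\delta)$ and the Carleson property of $\sum_n(1-|z_n|)\delta_{z_n}$ from uniform separation, this finishes $I_2$. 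The ``Wait --- that over-counts'' is a false alarm, and the detour into propagating a lower bound on $|f_j|$ into the discs via the Bloch Lipschitz estimate is both unnecessary and problematic: shrinking $\delta$ enlarges the complement and could destroy the infimum hypothesis, while with the given $\delta$ you cannot control which $f_j$ is large at which boundary point, nor whether $\kappa/2$ dominates the Bloch oscillation across the disc. Drop that detour; both ``delicate points'' (a) and (b) disappear once you remember you already have $A\in H^\infty_2$.
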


The first part of Theorem~\ref{thm:converse_final} improves \cite[Theorem~7]{G:2018} 
by Example~\ref{ex:cases}(ii) below. When comparing Theorem~\ref{thm:converse_final}
to Theorem~\ref{thm:converse_finalx} note that in the former result it is not required that $f_1,f_2\in H^\infty$.

%%%%%%%%%%%%%%%%%%%%%%
%%%% ---- PROOF ---- %%%%
%%%%%%%%%%%%%%%%%%%%%%

\begin{proof}[Proof of Theorem~\ref{thm:converse_final}]
Let $f_1,f_2\in \mathcal{B}$ be linearly independent solutions of \eqref{eq:de2} and suppose that
\eqref{eq:exc_ind} holds. Denote $\Omega = \bigcup_{n} \Delta_p(z_n,\delta)$.
Since
\begin{equation} \label{eq:preli}
|A| = \frac{|f_1|+|f_2|}{|f_1|+|f_2|} \, |A| = \frac{|f_1''|+|f_2''|}{|f_1|+|f_2|},
\end{equation}
we deduce
\begin{equation*}
  \sup_{z\in\D\setminus\Omega} |A(z)| (1-|z|^2)^2 \leq \frac{\nm{f_1''}_{H^\infty_2} + \nm{f_2''}_{H^\infty_2}}
  {\inf_{z\in\D\setminus\Omega} \big(|f_1(z)| + |f_2(z)|\big)}.
\end{equation*}
Since $A\in\H(\D)$, we conclude $A\in H^\infty_2$ by Lemma~\ref{lemma:exc_set}. This completes the
proof of the first part of Theorem~\ref{thm:converse_final}.

If $f_1,f_2\in \BMOA$ and $\{z_n \}\subset \D$ in \eqref{eq:exc_ind} is uniformly separated, then we write
\begin{equation*}
\sup_{a\in\D} \int_{\D} |A(z)|^2(1-|z|^2)^3 \, \frac{1-\abs{a}^2}{|1-\overline{a}z|^2} \, dm(z) = I_1 + I_2,
\end{equation*}
where $I_1,I_2$ are defined as below.
By \eqref{eq:preli} and \cite[Theorem~4.2.1]{R:2001}, we deduce
\begin{align}
I_1 & = \, \sup_{a\in\D} \int_{\D\setminus \Omega} |A(z)|^2(1-|z|^2)^3 \, \frac{1-\abs{a}^2}{|1-\overline{a}z|^2} \, dm(z) \notag\\
    & \, \lesssim  \sup_{a\in\D} \int_{\D} \big( |f_1''(z)|^2 + |f_2''(z)|^2 \big) (1-|z|^2)^3 \, \frac{1-\abs{a}^2}{|1-\overline{a}z|^2} \, dm(z)
< \infty. \label{eq:ppop}
\end{align}
Actually, \eqref{eq:ppop} is bounded above by a~constant multiple of $\nm{f_1}_{\rm BMOA}^2 + \nm{f_2}_{\rm BMOA}^2$.
Since $A\in H^\infty_2$ by the first part of the proof, standard estimates yield
\begin{align}
I_2 & = \sup_{a\in\D} \, \sum_n \, \int_{\Delta_p(z_n,\delta)} |A(z)|^2(1-|z|^2)^3 \, \frac{1-\abs{a}^2}{|1-\overline{a}z|^2} \, dm(z) \notag\\
    & \lesssim \nm{A}^2_{H^\infty_2} \, \sup_{a\in\D} \, \sum_n \frac{(1-\abs{a}^2)(1-|z_n|^2)}{|1-\overline{a}z_n|^2} < \infty. \label{eq:sum}
\end{align}
The sum in \eqref{eq:sum} is finite by the uniform separation of $\{z_n\}$. 
This completes the proof of Theorem~\ref{thm:converse_final}.
\end{proof}

If $\{z_n\}\subset \D$ is a~Blaschke sequence, then the Blaschke product
\begin{equation*}
B(z) = B_{\{z_n\}}(z) = \prod_n \frac{|z_n|}{z_n} \, \frac{z_n-z}{1-\overline{z}_n z}, \quad z\in\D,
\end{equation*}
is a bounded analytic function which vanishes precisely on $\{z_n\}$. 
Let $f_1,f_2\in H^\infty$. By \cite[Theorem~3]{T:1988}, the ideal 
\begin{equation*}
J_{H^\infty}(f_1,f_2) = \Big\{ f\in H^\infty : \text{ $\exists \, c = c(f)>0$ such that  $|f| \leq c \big(|f_1| + |f_2|\big)$} \!\Big\}
\end{equation*}
contains a~Blaschke product whose zeros form a~finite union of uniformly separated sequences if and only if \eqref{eq:tolo} holds. 
If $B$ is such a~Blaschke product, then there exists a~constant $0<\delta<1$ and a~subsequence $\{z_n'\}$
of zeros of $B$ such that the discs $\Delta_p(z_n',\delta)$, $n\in\N$, are pairwise disjoint and
\begin{equation} \label{eq:eB} 
\inf\bigg\{ |B(z)| : z \in \D \setminus \bigcup_n \, \Delta_p(z_n',\delta) \bigg\} > 0.
\end{equation}
This follows from \cite[Lemmas~1 and 3]{K-L:1969}; see also \cite[Lemma~1]{N:1994}.
Therefore Theorem~\ref{thm:converse_final} gives an immediate
proof for Theorem~\ref{thm:converse_finalx}. We also present another proof 
which, in addition, provides a~concrete representation for the coefficient $A$.

%%%%%%%%%%%%%%%%%%%%%%
%%%% ---- PROOF ---- %%%%
%%%%%%%%%%%%%%%%%%%%%%

\begin{proof}[Proof of Theorem~\ref{thm:converse_finalx}]
By \eqref{eq:tolo} and \cite[Theorem~3]{T:1988}, also the ideal $I_{H^\infty}(f_1,f_2)$ contains a~Blaschke product $B$
whose zeros form a~finite union of uniformly separated sequences.
This is equivalent to the fact that there exist functions $g_1,g_2\in H^\infty$ such that $f_1 g_1 + f_2 g_2 = B$. 
Differentiate this identity twice, and then apply \eqref{eq:de2} to $f_1''$ and $f_2''$,
to obtain
\begin{equation} \label{eq:diff_twice}
A = \frac{2(f_1'g_1'+f_2'g_2') + f_1g_1''+ f_2g_2''-B''}{B}.
\end{equation}
As in the proof of Theorem~\ref{thm:converse_final}, by taking account on \eqref{eq:eB},
we conclude that $|A(z)|^2(1-|z|^2)^3\, dm(z)$ is a~Carleson measure.
\end{proof}

One of the objectives in Section~\ref{sec:bdd} was to generalize 
a~result according to which $A\in H^\infty_2$ if
$f_1,f_2\in\mathcal{B}$ are linearly independent solutions of \eqref{eq:de2} for $A\in\H(\D)$ such that 
$\inf_{z\in\D} (|f_1(z)| + |f_2(z)|) >0$. 
The Cauchy-Schwarz inequality gives
\begin{equation} \label{eq:csaa}
|W(f_1,f_2)|^2 \leq ( |f_1|^2+|f_2|^2 ) \, ( |f_1'|^2+|f_2'|^2 ).
\end{equation}
Since $f_1',f_2'\in H^\infty_1$, we deduce $|f_1(z)|+|f_2(z)| \gtrsim 1-|z|^2$, $z\in\D$,  without using
any additional assumptions.

%%%%%%%%%%%%%%%%%%%%%%%%
%%%% ---- EXAMPLE ---- %%%%
%%%%%%%%%%%%%%%%%%%%%%%%

\begin{example} \label{ex:cases}
Let $f_1,f_2\in H^\infty$ be linearly independent solutions of \eqref{eq:de2} for $A\in H^\infty_2$. 
This example concerns different situations that may happen.
\begin{enumerate}
\item[\rm (i)]
There are a~lot of examples in which $\inf_{z\in\D} ( |f_1(z)| + |f_2(z)| ) >0$.
See the discussion after the proof of \cite[Theorem~2]{G:2017}, for example.

\item[\rm (ii)]
The proof of Theorem~\ref{thm:steinmetz} below produces an~example, where
the condition \eqref{eq:exc_ind} holds; 
take $\{z_n\}$ as in Theorem~\ref{thm:steinmetz} and note that $|f_1|+|f_2|\geq |f_1|$, where
$f_1$ has the desired property.
At the same time, $|f_1(z_n)|+|f_2(z_n)| \asymp 1-|z_n|^2$
as $n\to\infty$. Not only $\inf_{z\in\D} ( |f_1(z)| + |f_2(z)| ) >0$ fails
to be true but also it breaks down in the~worst possible way.

\item[\rm (iii)]
Let $f_1(z)=(1-z^2)^{1/2}$ and $f_2(z)=(1-z^2)^{1/2} \log( (1+z)/(1-z) )$, $z\in\D$.
These functions are linearly independent solutions of \eqref{eq:de2} for the coefficient
$A(z)=1/(1-z^2)^2$, $z\in\D$, which evidently satisfies $A\in H^\infty_2$. Since both solutions have radial limit zero along the
positive real axis, the condition \eqref{eq:exc_ind} cannot hold for any pairwise disjoint
pseudo-hyperbolic discs.
\end{enumerate}
\end{example}

%%%%%%%%%%%%%%%%%%%%%%%%
%%%% ---- SECTION ---- %%%%
%%%%%%%%%%%%%%%%%%%%%%%%

\section{Proofs of Theorems~\ref{thm:steinmetz} and~\ref{thm:intp}} \label{sec:steinmetz}

The first part of the proof of Theorem~\ref{thm:steinmetz}
follows directly from that of \cite[Corollary~3]{G:2017}. 
The new contribution lies in the fact that the differential equation in question admits only 
bounded solutions.

%%%%%%%%%%%%%%%%%%%%%%
%%%% ---- PROOF ---- %%%%
%%%%%%%%%%%%%%%%%%%%%%

\begin{proof}[Proof of Theorem~\ref{thm:steinmetz}]
Let $B=B_\Lambda$ be the Blaschke product corresponding to the uniformly separated sequence $\Lambda=\{z_n\}$.
By \eqref{eq:sep} and Cauchy's integral formula,
$\sup_{z_n\in\Lambda} |B''(z_n)|/|B'(z_n)|^2 < \infty$.
Let $f_1=Be^{Bk}$, where $k\in H^\infty$ is a~solution of the interpolation problem
\begin{equation*}
k(z_n) = - \frac{B''(z_n)}{2 \, B'(z_n)^2}, \quad z_n\in\Lambda.
\end{equation*}
As in the proof of Theorem~\ref{thm:converse_final}, the coefficient $A=-f_1''/f_1 \in\H(\D)$
induces a~Carleson measure $|A(z)|^2(1-|z|^2)^3\, dm(z)$.
Now, $f_1$ is a~solution of \eqref{eq:de2} which has precisely the prescribed zeros $\Lambda$.

Since $\Lambda$ is uniformly separated, there exists a~constant
$0<\delta<1$ such that $\Omega = \bigcup_{z_n\in\Lambda} \Delta_p(z_n,\delta)$ 
is a~union of pairwise disjoint pseudo-hyperbolic discs. Fix any 
$\alpha\in \D \setminus \Omega$, and 
define the meromorphic function $f_2$ by
\begin{equation} \label{eq:solrep}
f_2(z) = f_1(z)\, \int_\alpha^z \frac{1}{f_1(\zeta)^2} \, d\zeta, \quad z\in\D.
\end{equation}
Choose the path of integration 
by the following rules. If $z\in \D \setminus \Omega$, then the whole path lies in $\D \setminus \Omega$.
If $z\in \Delta_p(z_n,\delta)$ for some $z_n\in\Lambda$, then the path stays in 
$(\D \setminus \Omega) \cup \Delta_p(z_n,\delta)$. 
Then, each point $z\in\D$ can be reached
by a~path which satisfies these properties and is also of uniformly bounded Euclidean length.
The following argument is standard.
In a~sufficiently small pseudo-hyperbolic neighborhood of~$\alpha$, $f_2$ represents
an~analytic function such that $f_1f_2'-f_1'f_2$ is identically one. 
As a~solution of~\eqref{eq:de2} function $f_2$ admits an~analytic continuation to~$\D$,
and this continuation agrees with the representation \eqref{eq:solrep}.

There exists a~constant
$\mu=\mu(\Lambda)$ such that $|B(\zeta)|\geq \mu>0$ for $\zeta\in\D\setminus \Omega$;
see \cite[Theorem~1]{CC:1968} for example.
We deduce
\begin{equation*}
  |f_2(z)| 
  \leq |B(z)| e^{|B(z)| |k(z)|} \int_\alpha^z \frac{|d\zeta|}{|B(\zeta)|^2 e^{-2 |B(\zeta)| |k(\zeta)|}}
  \leq \frac{e^{3 \:\! \nm{k}_{H^\infty}}}{\mu^2} \int_{\alpha}^z |d\zeta|, \quad z\in \D \setminus \Omega.
\end{equation*}
Lemma~\ref{lemma:exc_set} implies that $f_2\in H^\infty$. 
Since $W(f_1,f_2)=1$, we obtain 
\begin{align*}
 (f_1/f_2)^\#(z_n) \, (1-|z_n|^2)^2 
  %& = \frac{1}{|f_1(z_n)|^2+|f_2(z_n)|^2} \, (1-|z_n|^2)^2 \\
  & =  \frac{1}{|f_2(z_n)|^2} \, (1-|z_n|^2)^2
    = |f_1'(z_n)|^2  (1-|z_n|^2)^2\\
  & =  |B'(z_n)|^2  (1-|z_n|^2)^2, \quad z_n\in\Lambda.
\end{align*}
This completes the proof as $\Lambda$ is uniformly separated.
\end{proof}

The proof of Theorem~\ref{thm:intp} depends on a~supporting result,
which is considered next. 
Suppose that $f\in\H(\D)$, $f\colon\D \to\D$, $f(0)=0$ and $\abs{f'(0)}\geq \delta$
for some $0<\delta\leq 1$. By Cauchy's integral formula and Schwarz's lemma,
\begin{equation*}
\abs{f'(0)}-\abs{f'(z)} \leq \frac{12\, |z|}{(1-|z|)^2}, \quad z\in\D.
\end{equation*}
If $0<\eta<1$ satisfies $12\eta/(1-\eta)^2< \delta/2$, then
then $\abs{f'(z)}\geq \delta/2$ for all $\abs{z}< \eta$. 
The following lemma is a~conformally invariant version of this property.

%%%%%%%%%%%%%%%%%%%%%%
%%%% ---- LEMMA ---- %%%%
%%%%%%%%%%%%%%%%%%%%%%

\begin{lemma} \label{lemma:a2}
Suppose that $f\in\H(\D)$ and $f\colon\D \to\D$. Assume that there exists a~sequence $\Lambda\subset \D$
such that $\inf_{z_n\in\Lambda} \, \abs{f'(z_n)} (1-\abs{z_n}^2) \geq \delta>0$.
If $0<\eta<1$ satisfies $12\eta/(1-\eta)^2< \delta/2$,
then there exist a~constant $\nu=\nu(\delta)$ such that
\begin{equation*}
\abs{f'(z)}  (1-\abs{z}^2) \geq \nu >0, \quad z\in \bigcup_{z_n\in\Lambda} \Delta_p(z_n,\eta).
\end{equation*}
\end{lemma}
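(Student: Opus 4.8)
The plan is to normalize $f$ at each point of $\Lambda$ by pre- and post-composing with disc automorphisms, apply the scalar estimate recalled just before the statement, and then push the resulting derivative bound back to $f$, keeping track of the fact that $|f'(z)|(1-|z|^2)$ is not conformally invariant.

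First, fix $z_n\in\Lambda$. Let $\varphi_{z_n}$ be the involutive automorphism of $\D$ with $\varphi_{z_n}(0)=z_n$, let $\psi_n=\varphi_{f(z_n)}$, and set $g_n=\psi_n\circ f\circ\varphi_{z_n}$; then $g_n\colon\D\to\D$ is analytic with $g_n(0)=0$. Using $|\varphi_{z_n}'(0)|=1-|z_n|^2$ and $|\psi_n'(f(z_n))|=(1-|f(z_n)|^2)^{-1}$, the chain rule gives $|g_n'(0)|=|f'(z_n)|(1-|z_n|^2)/(1-|f(z_n)|^2)$, which by the Schwarz--Pick inequality is at least $|f'(z_n)|(1-|z_n|^2)\ge\delta$; the same inequality shows $\delta\le1$, so the hypothesis on $\eta$ is consistent with the one in the scalar estimate. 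That estimate, applied to $g_n$, then yields $|g_n'(w)|\ge\delta/2$ whenever $|w|<\eta$.

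Next I would transfer this back to $f$. Given $z\in\Delta_p(z_n,\eta)$, write $z=\varphi_{z_n}(w)$ with $|w|=\varrho_p(z_n,z)<\eta$. Differentiating $g_n=\psi_n\circ f\circ\varphi_{z_n}$ at $w$, and using the elementary identities $(1-|z|^2)/|\varphi_{z_n}'(w)|=1-|w|^2$ and $|\psi_n'(f(z))|=(1-|g_n(w)|^2)/(1-|f(z)|^2)$ (the latter because $\psi_n(f(z))=g_n(w)$), one gets
\[
  |f'(z)|(1-|z|^2)=|g_n'(w)|\,(1-|w|^2)\,\frac{1-|f(z)|^2}{1-|g_n(w)|^2}.
\]
Here $|g_n'(w)|\ge\delta/2$, $1-|w|^2\ge1-\eta^2$ and $1-|g_n(w)|^2\le1$; for the factor $1-|f(z)|^2$ I would note that $\varrho_p(f(z),f(z_n))=|g_n(w)|\le|w|<\eta$ by Schwarz's lemma for $g_n$, so the standard distortion estimate for $1-|\cdot|^2$ along the pseudo-hyperbolic metric together with the Schwarz--Pick bound $1-|f(z_n)|^2\ge|f'(z_n)|(1-|z_n|^2)\ge\delta$ gives $1-|f(z)|^2\ge\frac{1-\eta}{1+\eta}\,\delta$. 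Multiplying the four factors produces $|f'(z)|(1-|z|^2)\ge\frac{1}{2}\delta^2(1-\eta)^2$, and since the hypothesis $12\eta/(1-\eta)^2<\delta/2\le\frac12$ forces $\eta<\frac1{24}$, the right-hand side is bounded below by a constant $\nu=\nu(\delta)>0$. As $z_n\in\Lambda$ and $z\in\Delta_p(z_n,\eta)$ were arbitrary, this is the claim.

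The only genuine obstacle is the non-invariance just mentioned: the transfer from the normalized map $g_n$ back to $f$ carries the extra factor $\frac{1-|f(z)|^2}{1-|g_n(w)|^2}$, so the substance of the argument is in bounding $1-|f(z)|^2$ from below. This succeeds precisely because Schwarz--Pick forces $1-|f(z_n)|^2\ge\delta$, which keeps $f(z_n)$ --- and hence, by the distortion estimate, every point of $f(\Delta_p(z_n,\eta))$ --- a definite pseudo-hyperbolic distance away from $\partial\D$.
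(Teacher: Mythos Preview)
Your proof is correct and follows the same strategy as the paper: normalize at each $z_n$ via $g_n=\varphi_{f(z_n)}\circ f\circ\varphi_{z_n}$, apply the scalar estimate preceding the lemma to get $|g_n'(w)|\geq\delta/2$ on $|w|<\eta$, and then undo the normalization using the chain rule together with Schwarz--Pick (to bound $1-|f(z_n)|^2\geq\delta$) and the pseudo-hyperbolic distortion estimate (to pass from $1-|f(z_n)|^2$ to $1-|f(z)|^2$). Your bookkeeping is in fact a bit more explicit than the paper's, yielding the concrete constant $\nu=\tfrac{1}{2}\delta^2(1-\eta)^2$ where the paper leaves $\nu=\delta^\star\delta$ with $\delta^\star=\delta^\star(\delta,\eta)$ unspecified.
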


%%%%%%%%%%%%%%%%%%%%%%
%%%% ---- PROOF ---- %%%%
%%%%%%%%%%%%%%%%%%%%%%

\begin{proof}
Let $z_n\in\Lambda$ be fixed, and define $g_{z_n} = \varphi_{f(z_n)} \circ f \circ \varphi_{z_n}$.
Now $g_{z_n}\colon \D\to\D$ is analytic, $g_{z_n}(0) = 0$, and
\begin{equation*}
  \abs{g_{z_n}'(0)} 
  = \big|\varphi_{f(z_n)}' \big( f( z_n) \big) \big| \, \abs{ f'(z_n)} (1-\abs{z_n}^2 )
  \geq \abs{ f'(z_n)}  (1-\abs{z_n}^2 ) \geq \delta.
\end{equation*}
The property above implies
\begin{equation*}
  \abs{g_{z_n}'(z)} 
  = \big|\varphi_{f(z_n)}' \big( f( \varphi_{z_n}(z)) \big) \big| \cdot \abs{ f'( \varphi_{z_n}(z))} \cdot \abs{ \varphi_{z_n}'(z) }
  \geq \delta/2, \quad \abs{z}< \eta.
\end{equation*}
If we denote $w=\varphi_{z_n}(z)$, then $\abs{z}<\eta$ if and only if $w\in\Delta_p(z_n,\eta)$. Consequently,
\begin{equation*}
  \abs{ f'(w)} (1-\abs{w}^2) \geq \frac{\delta}{2} \cdot \frac{1-\abs{\varphi_{z_n}(w)}^2}{1-\abs{\varphi_{f(z_n)}\big(f(w)\big)}^2}
  \big( 1-\abs{f(w)}^2 \big), \quad w\in\Delta(z_n,\eta).
\end{equation*}
Since $\varrho_p(w, z_n) < \eta$, we have $\varrho_p(f(w), f(z_n)) < \eta$ by Schwarz's lemma. Therefore
there exists a~constant $\delta^\star=\delta^\star(\delta,\eta)>0$ such that
\begin{equation*}
  \abs{ f'(w)} (1-\abs{w}^2) 
   \geq \delta^\star  \big( 1-\abs{f(z_n)}^2 \big)
   \geq  \delta^\star  \abs{ f'(z_n)} (1-\abs{z_n}^2)
  \geq \delta^\star \delta,
   \quad w\in\Delta(z_n,\eta),
\end{equation*}
by the Schwarz-Pick lemma. The claim follows for $\nu=\delta^\star\delta$.
\end{proof}

%%%%%%%%%%%%%%%%%%%%%%
%%%% ---- PROOF ---- %%%%
%%%%%%%%%%%%%%%%%%%%%%

\begin{proof}[Proof of Theorem~\ref{thm:intp}]
The proof is divided into two steps. The first step takes advantage of two results concerning 
interpolation in $H^\infty$.

%%%%%%%%%%%%%%%%%%%%
%%%% ---- STEP ---- %%%%
%%%%%%%%%%%%%%%%%%%%

\medskip
\noindent\emph{Construction of auxiliary functions.}
Let $B=B_\Lambda$ be the Blaschke product corresponding to the uniformly separated sequence $\Lambda=\{z_n\}$,
and let $\{w_n\}$ be the bounded target sequence for the desired interpolation.
Consequently, 
\begin{equation} \label{eq:sep}
  \inf_{z_n\in\Lambda} \, |B'(z_n)| (1-|z_n|^2) 
  = \inf_{z_n\in\Lambda} \, \prod_{z_k\in\Lambda\setminus \{z_n\}} \left| \frac{z_k-z_n}{1-\overline{z}_kz_n}\right| =\delta >0.
\end{equation}

Let $0<\eta<1$ satisfy $12\eta/(1-\eta)^2< \delta/2$. Then, in particular, $\eta<\delta/3$.
Earl's interpolation theorem \cite[Theorem~2]{E:1970}, applied with $\eta$ instead of $\delta$, shows that 
\begin{equation} \label{eq:intprop}
\big\{ h \in H^\infty : \text{$h(z_n)=w_n$ for all $z_n\in\Lambda$} \big\}
\end{equation}
can be solved by a~constant multiple of a~Blaschke product. More precisely,
there exist $C=C(\Lambda, \{w_n\}, \eta) \in \C$ and 
a~Blaschke product $I=I(\Lambda, \{w_n\}, \eta)$ such that
\begin{enumerate}
\item[\rm (i)]
$h=C I$ solves the interpolation problem \eqref{eq:intprop};

\item[\rm (ii)]
the zeros $\Lambda^\star= \{\zeta_n\}$ of $I=I_{\{\zeta_n\}}$ satisfy $\zeta_n\in \Delta_p(z_n,\eta)$ for all $n$.
\end{enumerate}
The standard estimates show that 
\begin{equation*} 
 \inf_{\zeta_n\in\Lambda^\star} \, |I'(\zeta_n)| (1-|\zeta_n|^2) =
 \inf_{\zeta_n\in\Lambda^\star} \, \prod_{\zeta_k\in\Lambda^\star\setminus \{\zeta_n\}} 
 \left| \frac{\zeta_k-\zeta_n}{1-\overline{\zeta}_k \zeta_n}\right|  \geq \frac{\delta}{3} >0,
\end{equation*}
and therefore $\{\zeta_n\}$ is also uniformly separated. 

By applying Lemma~\ref{lemma:a2} to the Blaschke product $B$, there exists
another constant~$\nu$ such that $| B'(\zeta_n) | (1-|\zeta_n|^2) \geq  \nu >0$ for all $\zeta_n\in\Lambda^\star$.
According to \O yma's interpolation theorem \cite[Theorem~1]{O:1979}, there exists $g\in H^\infty$ such that
\begin{equation} \label{eq:int_g}
g(\zeta_n) = - \frac{I''(\zeta_n)}{2 \, I'(\zeta_n) \, B'(\zeta_n)}, \quad g'(\zeta_n)=0, \quad \zeta_n\in\Lambda^\star.
\end{equation}
Note that the target sequence for $g$ is bounded by the obtained estimates.

%%%%%%%%%%%%%%%%%%%%
%%%% ---- STEP ---- %%%%
%%%%%%%%%%%%%%%%%%%%

\medskip
\noindent\emph{Construction of the differential equation.}
Let $f=C I \, e^{Bg} \in\H(\D)$, where $C\in\C$ and $I,B,g$ are functions as in the above construction.
Clearly, $f(z_n)=w_n$ for all $z_n\in\Lambda$. The zeros of $f$ are precisely the points in $\Lambda^\star$, and they
are pairwise pseudo-hyperbolically close to the corresponding points in $\Lambda$ by (ii). Since
\begin{equation*}
f''(\zeta_n) = C e^{B(\zeta_n) g(\zeta_n)} \Big( I''(\zeta_n) + 2 \, I'(\zeta_n) B'(\zeta_n) g(\zeta_n) \Big) =0, \quad \zeta_n\in\Lambda^\star,
\end{equation*}
the function
\begin{equation*}
A = -\frac{f''}{f} = - \frac{I''+2\,  I' (Bg)'}{I} - \big( (Bg)' \big)^2 - (Bg)'',
\end{equation*}
is analytic in $\D$. More precisely, the points in $\Lambda^\star$ are removable singularities for 
the coefficient $A$ as
$g$ solves the interpolation problem \eqref{eq:int_g}.
As in the proof of Theorem~\ref{thm:converse_final}, 
we conclude that $|A(z)|^2(1-|z|^2)^3\, dm(z)$ is a~Carleson measure. The fact that
all solutions of \eqref{eq:de2} are bounded follows as
in the proof of Theorem~\ref{thm:steinmetz}.
This completes the proof of Theorem~\ref{thm:intp}.
\end{proof}

%%%%%%%%%%%%%%%%%%%%%%%%%%
%%%% ---- SUBSECTION ---- %%%%
%%%%%%%%%%%%%%%%%%%%%%%%%%

\subsection*{Separation of zeros and critical points}

Let $A\in H^\infty_2$, and let $f$ be a~non-trivial solution of \eqref{eq:de2}.
By \cite[Theorem~3]{S:1955}, the zeros of $f$
are separated in the hyperbolic metric by a~constant depending only on $\nm{A}_{H^\infty_2}$,
and by \cite[Corollary~2]{G:2017_1}, the hyperbolic distance between 
any zero and any critical point of $f$ is uniformly bounded away from zero 
in a~similar fashion. Moreover,
\cite[Example~1]{G:2017_1} shows that critical points of $f$ need not to obey any kind of
separation. The situation becomes more difficult if we consider similar questions between
zeros and critical points of linearly independent solutions.
See \cite[Section~4]{G:2017_1} for related discussion. 

The following result concerns differential equations with bounded
solutions. The proof is based on an auxiliary estimate \cite[Lemma~7, p.~209]{DS:2004}:
If $f\in H^\infty_\alpha$ for $0\leq \alpha< \infty$, then there exists a~constant $C=C(\alpha)$ 
with $0<C<\infty$ such that
\begin{equation} \label{eq:ds}
  \big| |f(z_1)|(1-|z_1|^2)^\alpha - |f(z_2)|(1-|z_2|^2)^\alpha \big|
  \leq C \, \varrho_p(z_1,z_2) \nm{f}_{H^\infty_\alpha}
\end{equation}
for all points $z_1,z_2\in\D$ with $\varrho_p(z_1,z_2)\leq 1/2$.
The sharpness discussion of Proposition~\ref{prop:sepa} below is omitted.

%%%%%%%%%%%%%%%%%%%%%%%%%%%
%%%% ---- PROPOSITION ---- %%%%
%%%%%%%%%%%%%%%%%%%%%%%%%%%

\begin{proposition} \label{prop:sepa}
Suppose that $A\in\H(\D)$ and all solutions of \eqref{eq:de2} are bounded.
\begin{itemize}
\item[\rm (i)]
It is possible that for each $0<\delta<1$ there exists a~solution of \eqref{eq:de2}, depending on $\delta$, 
which has two distinct zeros $z_1,z_2\in\D$ such that $\varrho_p(z_1,z_2) <\delta$.

\item[\rm (ii)]
Critical points of non-trivial solutions are not separated in any way.
\end{itemize}
Let $f_1,f_2\in H^\infty$ be linearly independent solutions of \eqref{eq:de2}.
\begin{itemize}
\item[\rm (iii)]
If $z_1\in\D$ is a~zero and $z_2\in\D$ is a~critical point of $f_1$,
then there exists a~constant $0<C<\infty$ such that
\begin{equation} \label{eq:sepa_old}
\varrho_p(z_1,z_2) \geq C \, \frac{|W(f_1,f_2)|}{\nm{f_1}_{H^\infty}  \nm{f_2}_{H^\infty}} \, \max\!\big\{1-|z_1|, 1-|z_2|\big\}.
\end{equation}

\item[\rm (iv)]
If $z_1\in\D$ is a~zero of $f_1$, and $z_2\in\D$ is a~zero of $f_2$, then \eqref{eq:sepa_old} holds.

\item[\rm (v)]
If $z_1\in\D$ is a~critical point of $f_1$, and $z_2\in\D$ is that of $f_2$, then \eqref{eq:sepa_old} holds.
\end{itemize}
\end{proposition}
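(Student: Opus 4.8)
\textbf{Proof proposal for Proposition~\ref{prop:sepa}.}

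The plan is to treat parts (i)--(ii) by invoking the explicit constructions already available in the paper, and parts (iii)--(v) by a single unified estimate exploiting that the Wronskian is constant. For (i), the natural source is Theorem~\ref{thm:steinmetz} (or its proof): given a uniformly separated sequence $\Lambda$, one obtains $A$ with $|A(z)|^2(1-|z|^2)^3\,dm(z)$ a Carleson measure --- hence $A\in H^\infty_2$ --- such that \eqref{eq:de2} has bounded solutions with zero-set $\Lambda$. Taking a family of sequences $\Lambda$ whose minimal separation is positive but tends to $0$, the associated solutions have two zeros whose pseudo-hyperbolic distance is $<\delta$; note that ``all solutions bounded'' is extracted exactly as in the proof of Theorem~\ref{thm:steinmetz}. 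For (ii), the cited \cite[Example~1]{G:2017_1} already shows that critical points of a solution obey no separation under $A\in H^\infty_2$, and Theorem~\ref{thm:steinmetz} supplies examples with $A\in H^\infty_2$ and all solutions bounded, so one simply quotes this.

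The substance is in (iii)--(v). Here I would argue as follows. Since $f_1,f_2\in H^\infty$ with $W(f_1,f_2)=W$ a nonzero constant, at any point $z\in\D$ we have $|W| = |f_1(z)f_2'(z)-f_1'(z)f_2(z)| \le |f_1(z)|\,|f_2'(z)| + |f_1'(z)|\,|f_2(z)|$. Now suppose $z_1$ is a zero and $z_2$ a critical point of $f_1$ as in (iii). Evaluating the Wronskian at $z_1$ gives $|W| = |f_1'(z_1)|\,|f_2(z_1)| \le |f_1'(z_1)|\,\nm{f_2}_{H^\infty}$, and at $z_2$ gives $|W| = |f_1(z_2)|\,|f_2'(z_2)| \le |f_1(z_2)|\,\nm{f_2}_{H^\infty}/(1-|z_2|^2)$ after using $f_2'\in H^\infty_1$ and $\nm{f_2'}_{H^\infty_1}\lesssim \nm{f_2}_{H^\infty}$ (the Schwarz--Pick / Cauchy estimate absorbed into the universal constant, as in \eqref{eq:csaa} discussion). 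The point is that $f_1$ is \emph{small at both} $z_1$ (it vanishes) \emph{and near} $z_2$ (its derivative vanishes there, so $|f_1'(z_2)|(1-|z_2|^2)=0$), while $|f_1'(z_1)|(1-|z_1|^2)$ and $|f_1(z_2)|$ are bounded below by $|W|/\nm{f_2}_{H^\infty}$-type quantities. To compare the two points, apply the DeVore--Stahl-type estimate \eqref{eq:ds}: with $\alpha=0$ applied to $f_1$, $\big| |f_1(z_1)| - |f_1(z_2)| \big| \le C\,\varrho_p(z_1,z_2)\,\nm{f_1}_{H^\infty}$, and with $\alpha=1$ applied to $f_1'$, $\big| |f_1'(z_1)|(1-|z_1|^2) - |f_1'(z_2)|(1-|z_2|^2)\big| \le C\,\varrho_p(z_1,z_2)\,\nm{f_1'}_{H^\infty_1}$, valid when $\varrho_p(z_1,z_2)\le 1/2$.

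Combining: if $\varrho_p(z_1,z_2)\le 1/2$, then from the $\alpha=1$ inequality, since $|f_1'(z_2)|(1-|z_2|^2)=0$, we get $|f_1'(z_1)|(1-|z_1|^2)\le C\,\varrho_p(z_1,z_2)\,\nm{f_1'}_{H^\infty_1} \lesssim C\,\varrho_p(z_1,z_2)\,\nm{f_1}_{H^\infty}$; but the Wronskian identity at $z_1$ gives $|f_1'(z_1)|(1-|z_1|^2) \ge |W|(1-|z_1|^2)/\nm{f_2}_{H^\infty}$, hence
\[
\varrho_p(z_1,z_2) \gtrsim \frac{|W(f_1,f_2)|}{\nm{f_1}_{H^\infty}\nm{f_2}_{H^\infty}}\,(1-|z_1|^2).
\]
Symmetrically, from the $\alpha=0$ inequality with $|f_1(z_1)|=0$ we get $|f_1(z_2)| \le C\,\varrho_p(z_1,z_2)\,\nm{f_1}_{H^\infty}$, while the Wronskian at $z_2$ gives $|f_1(z_2)| \ge |W|(1-|z_2|^2)/(\,c\,\nm{f_2}_{H^\infty})$ (using $|f_2'(z_2)| \le c\,\nm{f_2}_{H^\infty}/(1-|z_2|^2)$), so $\varrho_p(z_1,z_2) \gtrsim |W|(1-|z_2|^2)/(\nm{f_1}_{H^\infty}\nm{f_2}_{H^\infty})$. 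Taking the maximum over the two lower bounds yields \eqref{eq:sepa_old} in the regime $\varrho_p(z_1,z_2)\le 1/2$; if $\varrho_p(z_1,z_2)>1/2$ the bound \eqref{eq:sepa_old} is trivial since its right-hand side is at most a universal constant (both $1-|z_i|\le 1$ and $|W|\le \nm{f_1}_{H^\infty}\nm{f_2}_{H^\infty}$, up to adjusting $C$). Part (iv) is \emph{exactly the same} computation with the roles at $z_1$ and $z_2$ playing symmetric zero-conditions: at $z_1$, $|W|=|f_1'(z_1)|\,|f_2(z_1)|$; at $z_2$, $|W| = |f_1(z_2)|\,|f_2'(z_2)|$; and one applies \eqref{eq:ds} with $\alpha=0$ to $f_1$ (using $|f_1(z_1)|=0$) to get $|f_1(z_2)|\lesssim \varrho_p\,\nm{f_1}_{H^\infty}$, combined with $|f_1(z_2)|\gtrsim |W|(1-|z_2|^2)/\nm{f_2}_{H^\infty}$. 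Part (v) is the ``derivative'' analogue: at a critical point $z_1$ of $f_1$, $|W| = |f_1(z_1)|\,|f_2'(z_1)|$, and at a critical point $z_2$ of $f_2$, $|W|=|f_1(z_2)|\,|f_2'(z_2)|$; apply \eqref{eq:ds} with $\alpha=1$ to $f_2'$ using $|f_2'(z_2)|(1-|z_2|^2)=0$ to conclude $|f_2'(z_1)|(1-|z_1|^2)\lesssim \varrho_p\,\nm{f_2}_{H^\infty}$, and combine with $|f_2'(z_1)|(1-|z_1|^2)\gtrsim |W|(1-|z_1|^2)/(\,c\,\nm{f_1}_{H^\infty})$ obtained from the Wronskian at $z_1$ together with $|f_1(z_1)|\le\nm{f_1}_{H^\infty}$.

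The only mildly delicate point --- and the one I expect to be the main obstacle to getting clean constants --- is the bookkeeping of which of the two evaluation points supplies the stronger lower bound, and making sure that in each of (iii)--(v) at least one of the two ``small'' quantities ($|f_j|$ at a zero, or $|f_j'|(1-|z|^2)$ at a critical point) can be matched against a Wronskian lower bound at the \emph{other} point via the Lipschitz estimate \eqref{eq:ds}; the asymmetry between the $H^\infty$-norm (for function values) and the $H^\infty_1$-norm (for derivative values, via $\nm{f'}_{H^\infty_1}\lesssim\nm{f}_{H^\infty}$) has to be threaded through consistently. Everything else is routine once one fixes the convention that \eqref{eq:ds} is applied in the regime $\varrho_p(z_1,z_2)\le 1/2$ and the complementary regime is handled trivially.
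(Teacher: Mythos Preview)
Your treatment of parts (iii)--(v) is essentially the paper's own argument: apply the Lipschitz estimate \eqref{eq:ds} with $\alpha=0$ or $\alpha=1$ to $f_j$ or $f_j'$, and feed in the Wronskian identities at the special points; the case $\varrho_p(z_1,z_2)>1/2$ is disposed of trivially. (There is a harmless slip in your (v): at a critical point $z_2$ of $f_2$ one has $|W|=|f_1'(z_2)|\,|f_2(z_2)|$, not $|f_1(z_2)|\,|f_2'(z_2)|$; the latter is zero.)

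Part~(i), however, has a genuine gap. The statement asks for a \emph{single} coefficient $A$ (with all solutions bounded) such that for \emph{every} $\delta$ some solution has two zeros closer than $\delta$. Your construction via Theorem~\ref{thm:steinmetz} produces a different $A$ for each sequence $\Lambda$, and hence a different differential equation for each $\delta$. Worse, each such $A$ lies in $H^\infty_2$, and then Schwarz's theorem \cite[Theorem~3]{S:1955} forces the zeros of \emph{all} solutions to be separated by a constant depending only on $\nm{A}_{H^\infty_2}$; so no single example obtained this way can have solutions with zeros at arbitrarily small pseudo-hyperbolic distance. The paper's route is the opposite one: take the coefficient $A\in\H(\D)\setminus H^\infty_2$ with all solutions bounded from Theorem~\ref{thm:converse}(i), and then invoke the \emph{converse} direction \cite[Theorem~4]{S:1955}: if zeros of all non-trivial solutions were uniformly separated, one would conclude $A\in H^\infty_2$, a contradiction. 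For part~(ii) your reasoning is also imprecise: it is not enough to observe that \cite[Example~1]{G:2017_1} exhibits unseparated critical points under $A\in H^\infty_2$ and that Theorem~\ref{thm:steinmetz} provides (other) examples with bounded solutions; one must check that the \emph{specific} equation in \cite[Example~1]{G:2017_1} has all solutions bounded, which the paper does via \eqref{eq:second}.
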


%%%%%%%%%%%%%%%%%%%%%%
%%%% ---- PROOF ---- %%%%
%%%%%%%%%%%%%%%%%%%%%%

\begin{proof}
(i) Let the coefficient $A\in\H(\D) \setminus H^\infty_2$ be as in Theorem~\ref{thm:converse}(i).
If the pseudo-hyperbolic distance between any distinct zeros of any non-trivial solution of \eqref{eq:de2}
is uniformly bounded away from zero, then $A\in H^\infty_2$ by \cite[Theorem~4]{S:1955}.
This is a~contradiction, and therefore (i) holds in this particular case.

(ii) The assertion follows from \cite[Example~1]{G:2017_1}, since 
in this example all solutions of \eqref{eq:de2} are bounded; use \eqref{eq:second} to obtain a~bounded linearly independent solution.

(iii) Let $f_1\in H^\infty$ be the non-trivial solution of \eqref{eq:de2} with $f_1(z_1)=0=f_1'(z_2)$.
If $\varrho_p(z_1,z_2) > 1/2$, then there is nothing to prove. Otherwise, let $f_2\in H^\infty$
be a~solution  of \eqref{eq:de2}, which is linearly independent to $f_1$.  
Since $f_1(z_2)f_2'(z_2) = W(f_1,f_2)$, there exists a~constant $0<C_1<\infty$ such that
\begin{equation*}
\varrho_p(z_1,z_2) \geq \frac{|f_1(z_2)|}{C_1 \, \nm{f_1}_{H^\infty}} = \frac{|W(f_1,f_2)|}{C_1 \, \nm{f_1}_{H^\infty} |f_2'(z_2)|}
\geq \frac{|W(f_1,f_2)|(1-|z_2|^2)}{C_1 \, \nm{f_1}_{H^\infty} \nm{f_2}_{H^\infty}}
\end{equation*}
by \eqref{eq:ds}; note that $\nm{f_2'}_{H^\infty_1}\leq \nm{f_2}_{H^\infty}$ by standard estimates.
Analogously, since $- f_1'(z_1)f_2(z_1) = W(f_1,f_2)$, there exists another constant $0<C_2<\infty$ such that
\begin{equation*}
  \varrho_p(z_1,z_2) 
  \geq \frac{ |f_1'(z_1)|(1-|z_1|^2)}{C_2 \, \nm{f_1'}_{H^\infty_1}} 
  = \frac{|W(f_1,f_2)|(1-|z_1|^2)}{C_2 \, \nm{f_1'}_{H^\infty_1} |f_2(z_1)|}
\geq \frac{|W(f_1,f_2)|(1-|z_1|^2)}{C_2 \, \nm{f_1}_{H^\infty} \nm{f_2}_{H^\infty}}.
\end{equation*}

Statements (iv) and (v) are proved similarly. In the case of (iv) apply \eqref{eq:ds}
to $f_1,f_2\in H^\infty$, and in the case of (v) apply \eqref{eq:ds} to $f_1',f_2'\in H^\infty_1$.
\end{proof}

%%%%%%%%%%%%%%%%%%%%%%%
%%%% ---- SECTION ---- %%%%
%%%%%%%%%%%%%%%%%%%%%%%

\section{Proof of Theorem~\ref{thm:repres}} \label{sec:6}

After the proof of Theorem~\ref{thm:repres}, we consider its relation to conformal
metrics of constant curvature. We also discuss an~application concerning Carleson
measures induced by bounded solutions of \eqref{eq:de2} for $A\in\H(\D)$.

%%%%%%%%%%%%%%%%%%%%%%
%%%% ---- PROOF ---- %%%%
%%%%%%%%%%%%%%%%%%%%%%

\begin{proof}[Proof of Theorem~\ref{thm:repres}]
It is clear that $u$ is sufficiently smooth to be in the class $C^2$.

(i) Since $(f_1/f_2)'=-W(f_1,f_2)/f_2^2$, we deduce
\begin{equation*} 
\left( \frac{f_1}{f_2} \right)^\# = \frac{|W(f_1,f_2)|}{|f_1|^2+|f_2|^2},
\qquad
\partial u =  \frac{f_1' \overline{f_1} + f_2' \overline{f_2}}{|f_1|^2+|f_2|^2}.
\end{equation*}
We compute
\begin{equation*}
  \Delta u 
   = 4 \, \overline{\partial}(\partial u) 
   = 4 \, \frac{|f_1f_2'-f_1'f_2|^2}{(|f_1|^2+|f_2|^2)^2} 
   = 4 \, e^{-2u}.
\end{equation*}

(ii) As above, we obtain
\begin{align*}
  \frac{1}{4} \, \Delta u & = \partial \big( \overline{\partial} u \big)
    = \frac{|f_1'|^2+|f_2'|^2}{|f_1|^2+|f_2|^2} - 
    \frac{f_1\overline{f_1'} + f_2 \overline{f_2'}}{|f_1|^2+|f_2|^2} \cdot \frac{f_1'\overline{f_1} + f_2' \overline{f_2}}{|f_1|^2+|f_2|^2}\\
  & = \frac{|f_1'|^2+|f_2'|^2}{|f_1|^2+|f_2|^2} - \big( \overline{\partial}u \big) \cdot \big( \partial u \big).
\end{align*}
Since $u$ is real-valued, $\Delta u  =  (\Delta e^u)/(e^u) - 4 \, |\partial u|^2 = (\Delta e^u)/(e^u) - |\nabla u|^2$.

(iii) We deduce
\begin{equation*}
\partial^2 u =  \, \frac{f_1'' \overline{f}_1 + f_2'' \overline{f}_2}{|f_1|^2+|f_2|^2} - (\partial u)^2
    = - \, \frac{A |f_1|^2 + A |f_2|^2}{|f_1|^2+|f_2|^2} - (\partial u)^2 = - A - (\partial u)^2,
\end{equation*}
which completes the proof.
\end{proof}

%%%%%%%%%%%%%%%%%%%%%%%
%%%% ---- REMARK ---- %%%%
%%%%%%%%%%%%%%%%%%%%%%%

\begin{remark} \label{remark:conformalmetric}
Let $f_1$ and $f_2$ be linearly independent solutions of \eqref{eq:de2} for $A\in\H(\D)$.
As in the proof of Theorem~\ref{thm:repres}(i), we deduce that $v= -u =  \log\, (f_1/f_2)^{\#}$ is a~solution
of the Liouville equation $\Delta v = -4 \, e^{2 v}$. 
Recall that $\lambda(z)|dz|$ is said to be a~conformal metric on $\D$ if
the conformal density $\lambda: \D \to \R$ is strictly positive and continuous.
If $\lambda\in C^2$, then $\lambda(z)|dz|$ is called a~regular conformal metric on~$\D$. 
The (Gauss) curvature $\kappa: \D \to \R$ 
of the regular conformal metric $\lambda(z)|dz|$ is given
by $\kappa = - \Delta(\log \lambda)/\lambda^2$.
In conclusion, $(f_1/f_2)^{\#}(z)|dz|$ defines a~regular conformal metric 
of constant curvature $4$ on $\D$.
\end{remark}

As an~application of Theorem~\ref{thm:repres}, we return to consider differential 
equations with bounded solutions.
Theorem~\ref{thm:steinmetz} shows that, even if $f_1,f_2\in H^\infty$ are linearly
independent solutions of \eqref{eq:de2} for $A\in\H(\D)$, it may happen that $f_1/f_2$ is non-normal
and $((f_1/f_2)^\#)^2 \log(1/|z|) \, dm(z)$ is not a~Carleson measure.
The following result and Theorem~\ref{thm:repres}(ii) imply that
this Carleson measure condition becomes true if the exponent $2$ is replaced by any
smaller value.

%%%%%%%%%%%%%%%%%%%%%%%%
%%%% ---- THEOREM ---- %%%%
%%%%%%%%%%%%%%%%%%%%%%%%

\begin{theorem} \label{thm:uchiyama}
Let $f_1,f_2\in H^\infty$ be linearly independent solutions of \eqref{eq:de2} for $A\in\H(\D)$.
Then, $(|f_1'|^2+|f_2'|^2)(|f_1|^2 + |f_2|^2)^{\varepsilon-1} \log(1/|z|) \, dm(z)$
is a~Carleson measure for any $0<\varepsilon<\infty$.
\end{theorem}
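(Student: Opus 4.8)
The plan is to reduce the claimed Carleson measure estimate to a Carleson measure estimate for $H^\infty$ functions via an entropy/exponent trick, exploiting the fact that $f_1,f_2\in H^\infty$. The starting observation is that since $W(f_1,f_2)$ is a nonzero constant, the Cauchy--Schwarz inequality \eqref{eq:csaa} gives $|f_1'|^2+|f_2'|^2 \geq |W(f_1,f_2)|^2/(|f_1|^2+|f_2|^2)$, so the density is bounded below by a constant times $(|f_1|^2+|f_2|^2)^{\varepsilon-2}$; but we need an \emph{upper} bound, so this is only a sanity check on the exponent. For the upper bound, I would fix $0<\varepsilon<\infty$ and set $E=|f_1|^2+|f_2|^2$, noting $E\in H^\infty$-type bounds: $E$ is comparable to $|f_1|^2+|f_2|^2$ with $f_j$ bounded, hence $E\lesssim 1$ on $\D$. (For $\varepsilon\geq 1$ the factor $E^{\varepsilon-1}$ is itself bounded and the statement is weaker, so the real content is $0<\varepsilon<1$.)

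The key step is to recognize $(|f_1'|^2+|f_2'|^2)E^{\varepsilon-1}$ as (a constant multiple of) $\Delta$ applied to a power of $E$, up to lower-order terms. Indeed, writing $G=E^{\varepsilon}$ for the appropriate exponent, one computes $\Delta(E^{s}) = 4\partial\overline{\partial}(E^s) = 4s E^{s-1}\partial\overline{\partial}E + 4s(s-1)E^{s-2}|\partial E|^2$, and $\partial\overline{\partial}E = |f_1'|^2+|f_2'|^2$ while $|\partial E|^2 = |f_1'\overline{f_1}+f_2'\overline{f_2}|^2 \leq E(|f_1'|^2+|f_2'|^2)$ again by Cauchy--Schwarz. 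Choosing $s=\varepsilon$ (so $s-1=\varepsilon-1$), the first term is exactly $4\varepsilon(|f_1'|^2+|f_2'|^2)E^{\varepsilon-1}$, and the second term is bounded in absolute value by $4\varepsilon|1-\varepsilon|(|f_1'|^2+|f_2'|^2)E^{\varepsilon-1}$ as well. Thus for $0<\varepsilon<1$ (where $\varepsilon(\varepsilon-1)<0$) we in fact get $\Delta(E^\varepsilon)\geq c_\varepsilon(|f_1'|^2+|f_2'|^2)E^{\varepsilon-1}\geq 0$ pointwise, so $E^\varepsilon$ is subharmonic and $(|f_1'|^2+|f_2'|^2)E^{\varepsilon-1}\leq c_\varepsilon^{-1}\Delta(E^\varepsilon)$; for $\varepsilon\geq 1$ handle the sign by combining with the trivial bound $|\partial E|^2 E^{\varepsilon-2}\leq (|f_1'|^2+|f_2'|^2)E^{\varepsilon-1}$, giving $(|f_1'|^2+|f_2'|^2)E^{\varepsilon-1}\lesssim_\varepsilon \Delta(E^\varepsilon) + (|f_1'|^2+|f_2'|^2)E^{\varepsilon-1}$ which is circular, so instead for $\varepsilon\geq1$ simply use that $E^{\varepsilon-1}\leq (\sup E)^{\varepsilon-1}$ is bounded and reduce to the case $\varepsilon=1$, i.e. to $(|f_1'|^2+|f_2'|^2)\log(1/|z|)\,dm(z)$, which is $(|f_1'|^2+|f_2'|^2)$ against the Green-type weight.

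The final step is the standard fact that if $\psi\geq 0$ is subharmonic and bounded on $\D$, then $\Delta\psi(z)\,\log(1/|z|)\,dm(z)$ — more generally $\Delta\psi(z)\,(1-|z|^2)\,dm(z)$, which is comparable near the boundary — is a Carleson measure with total Carleson constant controlled by $\|\psi\|_{L^\infty}$; this is the Green's-formula/Littlewood--Paley identity $\int_\D \Delta\psi(z)\log(1/|z|)\,dm(z) = 2\pi(\psi_{\text{bdry avg}}-\psi(0))$ applied on Carleson boxes after a Möbius change of variables, exactly as in the $\BMOA$ characterization recalled before Theorem~\ref{thm:converse_final} and as used in the proof of Theorem~\ref{thm:converse_final}. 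Applying this with $\psi=E^\varepsilon$ (bounded since $E$ is bounded, using $f_1,f_2\in H^\infty$), together with the pointwise domination above, yields the claim. The main obstacle I anticipate is bookkeeping the sign of the cross term $\varepsilon(\varepsilon-1)E^{\varepsilon-2}|\partial E|^2$ for the two ranges $\varepsilon<1$ and $\varepsilon\geq1$ cleanly, and making sure the $\log(1/|z|)$ weight (rather than $1-|z|^2$) is handled — but near the boundary these are comparable and near the origin everything is bounded, so this is routine; alternatively one can invoke Theorem~\ref{thm:repres}(ii), which writes $e^{-u}\Delta e^u = \Delta u+|\nabla u|^2$ and identifies $(|f_1'|^2+|f_2'|^2)/(|f_1|^2+|f_2|^2)$ with $\tfrac14 e^{-u}\Delta e^u$, to phrase the subharmonicity of $E^\varepsilon = (e^u|W|)^\varepsilon$ intrinsically.
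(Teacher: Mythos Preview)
Your approach is essentially the same as the paper's: both reduce to the fact that $E^\varepsilon=(|f_1|^2+|f_2|^2)^\varepsilon$ is a bounded subharmonic function whose Laplacian dominates $(|f_1'|^2+|f_2'|^2)E^{\varepsilon-1}$, so that the Carleson property follows from the standard Green/Littlewood--Paley identity applied after a M\"obius change of variables. The paper packages this as a modified Uchiyama inequality (testing against $|f|^2$ for $f\in H^2$) and then invokes Theorem~\ref{thm:repres}(ii), whereas you compute $\Delta(E^\varepsilon)$ directly; your route is slightly more streamlined. One minor point: your treatment of $\varepsilon\geq 1$ is needlessly roundabout --- for $\varepsilon\geq 1$ the cross term $4\varepsilon(\varepsilon-1)E^{\varepsilon-2}|\partial E|^2$ is \emph{nonnegative}, so $\Delta(E^\varepsilon)\geq 4\varepsilon\, E^{\varepsilon-1}(|f_1'|^2+|f_2'|^2)$ holds immediately and there is no circularity; the case split is unnecessary (though your reduction to $\varepsilon=1$ via boundedness of $E$ is also valid).
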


Let $\Omega\subset \R^2$ be a~domain with smooth boundary, and let
$u_1,u_2$ be $C^2$-functions on $\overline{\Omega}$. The classical Green theorem
asserts
\begin{equation} \label{eq:green_origo}
  \int_{\partial \Omega} \left( u_1 \, \frac{\partial u_2}{\partial n} - u_2 \, \frac{\partial u_1}{\partial n} \right)  ds
   = \int_{\Omega} \big( u_1 \, \Delta u_2 - u_2 \, \Delta u_1 \big) \, dxdy,
\end{equation}
where $\partial/\partial n$ denotes differentiation in the direction of outward pointing normal
and $ds$ is the arc length on $\partial \Omega$.
The following argument is based on a~modification of Uchiyama's lemma.
We refer to \cite[p.~290]{N:1985} and \cite[Lemma~2.1]{TW:2005} for the original statement.
Suppose that $f\in\H(\D)$ and $\varphi\in C^2$ is a~subharmonic function in $\D$.
By the theorems of Green and Fubini, we deduce
\begin{align*}
&\frac{1}{2\pi} \int_0^{2\pi}  e^{\varphi(re^{i\theta})} |f(re^{i\theta})|^2 \, d\theta - e^{\varphi(0)} |f(0)|^2
      = \frac{1}{2\pi} \int_{D(0,r)} \Delta( e^{\varphi} |f|^2)(z)  \log\frac{r}{|z|} \, dm(z)
\end{align*} 
for any $0<r<1$. Since $(x-y)^2 \geq x^2/2 - y^2$ for $x,y\in\R$, we obtain
\begin{equation*} 
  \Delta ( e^\varphi |f|^2 ) 
  = e^\varphi (\Delta \varphi) |f|^2 + 4 \, e^\varphi | (\partial\varphi) f + f' |^2
  \geq  e^\varphi \Big( \Delta \varphi + 2 \, |\partial\varphi|^2 \Big) |f|^2 -  4 \, e^\varphi |f'|^2,
\end{equation*}
and further,
\begin{equation} \label{eq:uchi}
\begin{split}
& \frac{1}{2\pi} \int_{D(0,r)} |f(z)|^2 e^{\varphi(z)} \Big( \Delta \varphi(z) + 2 \, |\partial\varphi(z)|^2 \Big) \log\frac{r}{|z|} \, dm(z)\\
& \qquad \leq \frac{1}{2\pi} \int_0^{2\pi}  e^{\varphi(re^{i\theta})} |f(re^{i\theta})|^2 \, d\theta 
+ \frac{2}{\pi} \int_{D(0,r)} e^{\varphi(z)} |f'(z)|^2  \log\frac{r}{|z|} \, dm(z)
\end{split}
\end{equation}
for any $0<r<1$.

%%%%%%%%%%%%%%%%%%%%%%
%%%% ---- PROOF ---- %%%%
%%%%%%%%%%%%%%%%%%%%%%

\begin{proof}[Proof of Theorem~\ref{thm:uchiyama}]
Let $f_1,f_2\in H^\infty$ be linearly independent solutions of \eqref{eq:de2} for $A\in\H(\D)$.
Without loss of generality, we may assume that $W(f_1,f_2)=1$.
We conclude that $\varphi=  \varepsilon \, u = \varepsilon \, \log ( |f_1|^2+|f_2|^2 )$
is bounded from above and subharmonic in $\D$, as $\Delta \varphi = 4 \, \varepsilon \, ( (f_1/f_2)^\# )^2\geq 0$ 
by Theorem~\ref{thm:repres}(i). 
By the Littlewood-Paley formula \cite[Lemma~3.1]{G:2007}, we obtain
\begin{equation*}
\nm{f}_{H^2}^2 = |f(0)|^2 + \frac{2}{\pi} \int_{\D} |f'(z)|^2 \log\frac{1}{|z|} \, dm(z), \quad f\in\H(\D),
\end{equation*}
and therefore a~standard convergence argument applied to \eqref{eq:uchi} reveals that
\begin{equation*}
\frac{1}{2\pi} \!\int_{\D} |f(z)|^2 e^{\varphi(z)} \Big( \Delta \varphi(z) + 2 \, |\partial\varphi(z)|^2 \Big)\log\frac{1}{|z|}  dm(z)
\leq 2 \big( \nm{f_1}_{H^\infty}^2 + \nm{f_2}_{H^\infty}^2 \big)^\varepsilon \nm{f}_{H^2}^2
\end{equation*}
for any $f\in H^2$. This proves that $e^{\varphi(z)} ( \Delta \varphi(z) + 2 \, |\partial\varphi(z)|^2 ) \log(1/|z|) \, dm(z)$
is a~Carleson measure, and therefore by Theorem~\ref{thm:repres}(ii), we deduce
\begin{equation*}
\begin{split}
  e^\varphi \big( \Delta \varphi + 2 \, |\partial\varphi|^2 \big)
 & = \big( |f_1|^2+|f_2|^2 \big)^\varepsilon \, \left( \varepsilon \, \Delta u + \frac{\varepsilon^2}{2} \, |\nabla u|^2 \right)\\
& \geq  \min\!\big\{ \varepsilon, \varepsilon^2/2 \big\} \big( |f_1|^2+|f_2|^2 \big)^\varepsilon \, \frac{\Delta e^u}{e^u}.
\end{split}
\end{equation*}
This completes the proof of Theorem~\ref{thm:uchiyama}.
\end{proof}

%%%%%%%%%%%%%%%%%%%%%%%
%%%% ---- SECTION ---- %%%%
%%%%%%%%%%%%%%%%%%%%%%%

\section{Proofs of Theorem~\ref{theorem:new2} and Proposition~\ref{prop:eset}} \label{sec:nevrem}

Recall that the meromorphic
function $g$ in the unit disc belongs to the Nevanlinna class $\mathcal{N}$ if and only if the Ahlfors-Shimizu characteristic
\begin{equation*}
T_0(r,g) = \frac{1}{\pi} \, \int_0^r \bigg( \int_{D(0,t)} g^{\#}(z)^2\, dm(z) \bigg) \frac{dt}{t}
= \frac{1}{\pi} \, \int_{D(0,r)} g^{\#}(z)^2\, \log\frac{r}{|z|} \, dm(z) 
\end{equation*}
is uniformly bounded for $0<r<1$. The equivalence of the representations above follows from
Fubini's theorem.

Let $u\not\equiv -\infty$ be subharmonic in $\D$.
Function $u$ admits a~harmonic majorant in~$\D$ if and only if
$\lim_{r\to 1^-}  \int_0^{2\pi} u(re^{i\theta}) \, d\theta < \infty$,
and in this case, the least harmonic majorant for $u$ is
\begin{equation*}
  \hat{u}(z) 
  = \lim_{r\to 1^-} \, \frac{1}{2\pi}  \int_0^{2\pi} u(re^{i\theta}) \, 
  \frac{r^2-|z|^2}{|re^{i\theta}-z|^2}\, d\theta < \infty,
  \quad z\in\D.
\end{equation*}
See \cite[Theorem~3.3]{RR:1994} for more details. 
In the proof of Theorem~\ref{theorem:new2} we take advantage of the following
well-known fact: If $u\in C^2$ is subharmonic and $\phi$ is analytic, then $u\circ \phi$ is
subharmonic with $\Delta(u\circ \phi) = ((\Delta u) \circ \phi) \, |\phi'|^2$.

%%%%%%%%%%%%%%%%%%%%%%
%%%% ---- PROOF ---- %%%%
%%%%%%%%%%%%%%%%%%%%%%

\begin{proof}[Proof of Theorem~\ref{theorem:new2}]
(i) By Green's theorem \eqref{eq:green_origo} with $u_1 = 1$, $u_2 = u$, we obtain
\begin{equation*}
  \frac{d}{dt} \int_0^{2\pi} u(te^{i\theta}) \, d\theta = \frac{4}{t} \, \int_{D(0,t)} \big( (f_1/f_2)^{\#}(z) \big)^2 \, dm(z),
\quad 0<t<1,
\end{equation*}
as $\Delta u = 4 \, ( (f_1/f_2)^{\#})^2$ by Theorem~\ref{thm:repres}(i). By integrating from 0 to $r$, we conclude
$1/(2\pi) \int_0^{2\pi} u(re^{i\theta}) \, d\theta
 = u(0) + 2 \, T_0\big( r,f_1/f_2 \big)$
for any $0<r<1$. Consequently, $u$ admits a~harmonic majorant if and only if $f_1/f_2\in\NC$.

(ii) Let $a\in\D$. By Green's theorem and Theorem~\ref{thm:repres}(i),
\begin{equation*} 
\begin{split}
&  \frac{1}{2\pi} \int_0^{2\pi} u\big(a+(1-|a|) \,re^{i\theta}\big) \, d\theta - u(a)\\
& \qquad   =  \frac{2}{\pi} \, \int_0^r \bigg( \int_{D(a,t(1-|a|))} (f_1/f_2)^{\#}(z)^2\, dm(z) \bigg) \frac{dt}{t},
  \quad 0<r<1.
\end{split}
\end{equation*}
By letting $r\to 1^-$, we deduce
\begin{equation} \label{eq:lp}
\sup_{a\in\D} \, \widehat{u_a}(0) = \sup_{a\in\D}\, \frac{2}{\pi} \, \int_0^1 \bigg( \int_{D(a,t(1-|a|))} (f_1/f_2)^{\#}(z)^2\, dm(z) \bigg) \frac{dt}{t}.
\end{equation}
This completes the proof of (ii), as $f_1/f_2$ is a~normal function in the Nevanlinna class if and only if
the right-hand side of \eqref{eq:lp} is finite \cite[Theorem~1]{P:1991}.

(iii) The assertion is in some sense a~meromorphic counterpart of \cite[Theorem~5.1]{Y:1982}.
Fix $a\in\D$, and take $r$ to be sufficiently large to satisfy $|a|<r<1$.
Define $\psi(z) = r \, \varphi_{a/r}(z/r)$, $z\in\D$. By Green's theorem,
\begin{align*}
\frac{1}{2\pi} \int_0^{2\pi} u(re^{i\theta}) \, \frac{r^2 - |a|^2}{|re^{i\theta}-a|^2}\, d\theta - u(a)
& = \frac{1}{2\pi} \int_0^{2\pi} (u \circ \psi)(re^{it}) \, dt - (u\circ\psi)(0)\\
& =\frac{1}{2\pi} \int_{D(0,r)} \Delta u(z)  \log\frac{1}{| \varphi_{a/r}(z/r) |} \, dm(z).
\end{align*}
By using standard estimates and letting $r\to 1^-$, we conclude that
$\hat{u}(a) - u(a) \asymp \int_{\D} \Delta u(z) ( 1 - |\varphi_{a}(z)|^2 ) \, dm(z)$,
where the comparison constants are independent of $a\in\D$. Theorem~\ref{thm:repres}(i) implies
\begin{equation*}
  \sup_{a\in\D} \big( \hat{u}(a) - u(a) \big) 
    \asymp \, \sup_{a\in\D} \,\int_{\D} \big( (f_1/f_2)^\#(z) \big)^2 (1-|z|^2) \, \frac{1-|a|^2}{|1-\overline{a}z|^2} \, dm(z).
\end{equation*}
The part (iii) follows as $f_1/f_2\in \rm{UBC}$ if and only if $( (f_1/f_2)^\#(z))^2 (1-|z|^2)\, dm(z)$ is a~Carleson measure
\cite[Theorem~3]{P:1991}.

The proofs of (iv)-(vi) are straight-forward and hence omitted.
Note that the function $e^u = (|f_1|^2+|f_2|^2)/|W(f_1,f_2)|$ is subharmonic in $\D$.
\end{proof}

It is well-known that non-trivial solutions of a~Blaschke-oscillatory equation~\eqref{eq:de2}, $A\in\H(\D)$,
may lie outside the Nevanlinna class $\NC$ \cite[Section~4.3]{H:2013}. In the following
remark, we deduce an~estimate according to which the Nevanlinna characteristic of solutions
of Blaschke-oscillatory equations cannot grow arbitrarily~fast.

%%%%%%%%%%%%%%%%%%%%%%%
%%%% ---- REMARK ---- %%%%
%%%%%%%%%%%%%%%%%%%%%%%

\begin{remark} \label{remark:boder}
Let $f_1$ be a~non-trivial solution of a~Blaschke-oscillatory equation \eqref{eq:de2} for $A\in\H(\D)$.
Let $f_2$ be another solution of \eqref{eq:de2}, which is
linearly independent to $f_1$. 
Note that $f_2/f_1 \in \NC$ by \cite[Lemma~3]{H:2013}, and $(f_2/f_1)'=W(f_1,f_2)/f_1^2$
by straight-forward computation.
Kennedy's estimate \cite[Theorem~1]{K:1964} implies
\begin{equation} \label{eq:kennedy}
S = \int_0^1 (1-r) e^{2 \, T(r,(f_2/f_1)')} \, dr < \infty.
\end{equation}
Nevanlinna's first theorem shows that \eqref{eq:kennedy} remains to be true,
if $T(r,(f_2/f_1)')$ is replaced by $2 \, T(r,f_1)$. This places a~severe restriction for the
growth of $T(r,f_1)$ as $r\to 1^-$. Among other things, it implies that
$T(r,f_1) \leq (1/2) \log(\sqrt{2S}/(1-r))$ for all $0<r<1$. Therefore all solutions of \eqref{eq:de2} are
non-admissible \cite[p.~53]{H:2013}.
\end{remark}

%%%%%%%%%%%%%%%%%%%%%%
%%%% ---- PROOF ---- %%%%
%%%%%%%%%%%%%%%%%%%%%%

\begin{proof}[Proof of Proposition~\ref{prop:eset}]
Recall that $(f_1/f_2)^\#= |W(f_1,f_2)|/(|f_1|^2+|f_2|^2)$. Now
\begin{align*}
& \int_{\D} \big( ( f_1/f_2 )^\# \big)^2 (1-|z|^2)\, dm(z)\\
& \qquad \leq \frac{|W(f_1,f_2)|^2}{\delta^2} \int_{\{z\in\D \; \! : \; \! |f_1(z)|^2+|f_2(z)|^2 \geq \delta\}} (1-|z|^2)\, dm(z)\\
& \qquad \qquad + \left( \, \sup_{z\in\D} \, ( f_1/f_2 )^\#(z)^2 (1-|z|^2)^2 \right)  \int_{\{z\in\D \; \! : \; \! |f_1(z)|^2+|f_2(z)|^2 < \delta\}} \frac{dm(z)}{1-|z|^2}.
\end{align*}
Therefore $f_1/f_2$ belongs to the Nevanlinna class by the assumption.
\end{proof}

We briefly consider two applications of Proposition~\ref{prop:eset}.
Suppose that $f_1,f_2$ are linearly independent solutions of \eqref{eq:de2} for $A\in\H(\D)$ 
and assume that \eqref{eq:exc_ind} holds for some Blaschke sequence $\{z_n\}\subset \D$ and
$0<\delta<1$. Denote this infimum by $0<s<\infty$. We deduce
\begin{align*}
 \int_{\{z\in\D \; \! : \; \! |f_1(z)|^2+|f_2(z)|^2 < s^2/2\}} \frac{dm(z)}{1-|z|^2}
   \leq \sum_n \int_{\Delta_p(z_n,\delta)} \frac{dm(z)}{1-|z|^2} \asymp \sum_n (1-|z_n|) < \infty,
\end{align*}
where the pseudo-hyperbolic discs $\Delta_p(z_n,\delta)$ are not necessarily
pairwise disjoint. In such a case the normality of $f_1/f_2$ implies that $f_1/f_2\in\NC$ by Proposition~\ref{prop:eset}.

The same conclusion is obtained if $f_1/f_2$ is normal and $|f_1|+|f_2|$ is uniformly bounded from below for
all points in $\D$ which lie outside a~horodisc (that is, a~disc internally tangent to~$\partial\D$).
The details are omitted.

%%%%%%%%%%%%%%%%%%%%%%%
%%%% ---- SECTION ---- %%%%
%%%%%%%%%%%%%%%%%%%%%%%

\section{Proof of Theorem~\ref{thm:n3}} \label{sec:n3}

We begin with a~lemma, which is needed in the proof of Theorem~\ref{thm:n3}. This
auxiliary result is based on the well-known Harnack inequalities: if $h\in \hp(\D)$,~then
\begin{equation*}
\frac{1-\varrho_p(z,w)}{1+\varrho_p(z,w)} \leq \frac{h(z)}{h(w)} \leq \frac{1+\varrho_p(z,w)}{1-\varrho_p(z,w)},
\quad z,w\in\D.
\end{equation*}

Let $f\in\H(\D)$ and recall that $f\in\NC$ if and only if there exists $h \in \hp(\D)$ such that
$\log^+ |f| \leq h$, which is equivalent to the fact $|f| \leq e^h$. There is no reason to expect that any order derivative of $f$
would belong to $\NC$. However, for every $k\in\N$, there exists a~constant $C=C(k)$ with $0<C<\infty$ such that
\begin{equation} \label{eq:nevder}
  |f^{(k)}(z)| (1-|z|^2)^k \leq e^{C\,  h(z)}, \quad z\in\D,
\end{equation}
by Cauchy's integral formula and Harnack's inequality.
See \cite[Lemma~2.1]{HMN:preprint}.

%%%%%%%%%%%%%%%%%%%%%%
%%%% ---- LEMMA ---- %%%%
%%%%%%%%%%%%%%%%%%%%%%

\begin{lemma} \label{lemma:betterest}
Suppose that $f\in\H(\D)$ and it satisfies 
$|f(z)| (1-|z|^2)^k \leq e^{h(z)}$, $z\in\D$,
for $k\in\N \cup \{0\}$ and $h\in\hp(\D)$. If $f$ vanishes on a~sequence $\Lambda\in\Int\NC$, 
then there exists $H\in\hp(\D)$ such that
$|f(z)| (1-|z|^2)^k \leq \varrho_p(\Lambda, z) \, e^{H(z)}$, $z\in\D$.
\end{lemma}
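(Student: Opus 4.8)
The plan is to reduce the estimate to a local Schwarz-type bound at the points of $\Lambda$, combined with the global growth hypothesis on $f$. Write $\varrho_p(\Lambda,z)=\inf_{z_n\in\Lambda}\varrho_p(z_n,z)$, and split $\D$ according to whether $z$ is pseudo-hyperbolically close to $\Lambda$ or not. If $f\equiv 0$ there is nothing to prove, so assume $f\not\equiv 0$; then $\Lambda$ is a Blaschke sequence of zeros of $f$, with distinct terms since $\Lambda\in\Int\NC$.

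First consider the region $\varrho_p(\Lambda,z)\ge 1/4$. Here the hypothesis gives directly
\[
|f(z)|(1-|z|^2)^k\le e^{h(z)}=4\cdot\tfrac14\,e^{h(z)}\le 4\,\varrho_p(\Lambda,z)\,e^{h(z)},
\]
so the asserted inequality holds on this region with any $H\ge h+\log 4$. Now suppose $\varrho_p(\Lambda,z)<1/4$, and choose $z_n\in\Lambda$ with $\varrho_p(z_n,z)\le 2\varrho_p(\Lambda,z)<1/2$. Since $f(z_n)=0$, the composition $f\circ\varphi_{z_n}$ vanishes at the origin, so the Schwarz lemma applied on $\{|w|<1/2\}$ yields $|f(z)|\le 2\,\varrho_p(z_n,z)\,M_n$, where $M_n=\sup\{|f(\zeta)|:\zeta\in\Delta_p(z_n,1/2)\}$. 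On $\Delta_p(z_n,1/2)$ one has $1-|\zeta|^2\asymp 1-|z_n|^2$ with absolute constants, and $h(\zeta)\le 3h(z_n)$ by Harnack's inequality; so the hypothesis gives $M_n\le C(k)\,(1-|z_n|^2)^{-k}\,e^{3h(z_n)}$. Using once more that $\varrho_p(z_n,z)<1/2$ (hence $1-|z|^2\asymp 1-|z_n|^2$, and $h(z_n)\le 3h(z)$ by Harnack), we obtain
\[
|f(z)|(1-|z|^2)^k\le C(k)\,\varrho_p(\Lambda,z)\,e^{9h(z)}
\]
on this region as well (the exponent $9$ is not optimized).

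Combining the two regions, there is a constant $C=C(k)\ge 4$ with $|f(z)|(1-|z|^2)^k\le C\,\varrho_p(\Lambda,z)\,e^{9h(z)}$ for all $z\in\D$, and then $H=9h+\log C$ proves the claim, being the sum of a positive harmonic function and a positive constant and hence a member of $\hp(\D)$. I do not expect a deep obstacle here; the only points requiring care are keeping the Harnack factors under control when comparing $h$ at $z_n$ with its values at nearby points, and checking that the resulting $H$ is genuinely positive. I note that this argument only uses that $f$ vanishes on $\Lambda$; the hypothesis $\Lambda\in\Int\NC$ is not strictly needed, though it is the setting in which the lemma is applied and it guarantees that $\Lambda$ is an honest Blaschke sequence of simple points. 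An alternative is to factor $f=B_\Lambda g$ with $g\in\H(\D)$ and use $|B_\Lambda(z)|=\prod_n\varrho_p(z_n,z)\le\varrho_p(\Lambda,z)$, reducing matters to a bound $|g(z)|(1-|z|^2)^k\le e^{H(z)}$; but that route requires lower bounds for $|B_\Lambda|$ away from its zeros, which is precisely where the interpolating hypothesis (via \cite[Theorem~1.2]{HMNT:2004}) would enter, so the direct Schwarz-lemma argument is preferable.
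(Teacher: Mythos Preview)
Your argument is correct, and it is genuinely simpler than the paper's. The paper proceeds by a dyadic Whitney decomposition: for each top piece $T(Q)$ it blows up to regions $\Omega_1\subset\Omega_2$ of bounded pseudo-hyperbolic diameter, divides $f$ by the finite Blaschke product over $\Lambda\cap\Omega_1$, bounds the quotient on $\partial\Omega_2$ via the maximum modulus principle, and then recovers the factor $\varrho_p(\Lambda,z)$. To control the finite Blaschke product from below on $\partial\Omega_2$, the paper invokes the Nevanlinna interpolating condition \cite[Theorem~1.2]{HMNT:2004}, which bounds the number of points of $\Lambda$ in $\Omega_1$ in terms of a positive harmonic function. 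Your direct Schwarz-lemma estimate near the closest point of $\Lambda$, combined with Harnack to transfer $h$ between $z$ and $z_n$, avoids this machinery entirely and shows, as you note, that the hypothesis $\Lambda\in\Int\NC$ is superfluous for the lemma itself: it suffices that $f$ vanishes on $\Lambda$. The paper's route would in principle yield a bound with the full local Blaschke product rather than a single factor $\varrho_p(\Lambda,z)$, but since only the latter is claimed, nothing is gained there. Your alternative sketch via $f=B_\Lambda g$ is indeed closer in spirit to the paper's approach and would require the interpolating hypothesis; you are right to prefer the direct argument.
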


%%%%%%%%%%%%%%%%%%%%%%
%%%% ---- PROOF ---- %%%%
%%%%%%%%%%%%%%%%%%%%%%

\begin{proof}
Consider a~dyadic partition of $\D$ into Whitney squares of the type
\begin{equation*} 
Q  =Q_I = \big\{ z\in\D : 1-\abs{I}/(2\pi) \leq |z| < 1, \, \arg{z}\in I \big\}
\end{equation*}
where $\ell(Q)=\abs{I}$ is the arc-length of the interval $I\subset \partial\D$.
The top part of $Q$ is 
$T(Q) = \{ z\in Q : 1-\ell(Q)/(2\pi) \leq |z| \leq 1-\ell(Q)/(4\pi) \}$.

Let $Q$ be any Whitney square in the dyadic partition. Let $\Omega_1\subset \D$ such that
\begin{equation*}
T(Q) \subset \Omega_1, \quad \varrho_p\big( \partial\Omega_1, \partial T(Q)\big) = \diam_p\!\big( T(Q) \big),
\end{equation*}
and let $\Omega_2$ be another set such that
$\Omega_1 \subset \Omega_2 \subset\D$ and $\varrho_p( \partial\Omega_2, \partial \Omega_1 ) = 4 \diam_p\Omega_1$.
Here $\diam_p$ denotes the pseudo-hyperbolic diameter.
Define $g\in\mathcal{H}(\D)$ by
\begin{equation*}
g(z) = f(z) \Bigg( \, \prod_{z_k \in  \Lambda \, \cap \, \Omega_1} \frac{z_k - z}{1-\overline{z}_k z} \Bigg)^{-1}, \quad z\in\D.
\end{equation*}
We may assume that $\Lambda\cap \Omega_1$ is not empty, for
otherwise the assertion follows for all $z\in T(Q)$ by trivial reasons.
Fix any $z_n \in  \Lambda \, \cap \, \Omega_1$. We deduce
\begin{equation*}
|g(\zeta)| \leq \frac{(1-|\zeta|^2)^{-k} e^{h(\zeta)}}{\varrho_p(z_n,\zeta)} \Bigg( \, 
\prod_{z_k \in  \Lambda \, \cap \, \Omega_1 \, : \, z_k\neq z_n} 
\left| \frac{z_k - \zeta}{1-\overline{z}_k \zeta} \right| \Bigg)^{-1}, \quad \zeta\in\partial\Omega_2.
\end{equation*}
Since $\Lambda\in\Int\NC$, \cite[Theorem~1.2]{HMNT:2004} implies that
there exists $h_1\in \hp(\D)$ with
\begin{equation*}
  |g(\zeta)| 
  \lesssim (1-|\zeta|^2)^{-k} e^{h(\zeta)+h_1(z_n)} 
  \lesssim (1-|z_n|^2)^{-k} e^{(Ch+h_1)(z_n)}, \quad \zeta\in\partial\Omega_2,
\end{equation*}
where $0<C<\infty$ is a~universal constant by Harnack's inequalities.
The maximum modulus principle extends this estimate
for all $z\in\Omega_2$, and therefore
\begin{equation*}
  |f(z)| 
  \leq |g(z)| \prod_{z_k \in  \Lambda \, \cap \, \Omega_1} \left| \frac{z_k - z}{1-\overline{z}_k z} \right|
  \lesssim (1-|z_n|^2)^{-k} e^{(Ch+h_1)(z_n)} \, \varrho_p(\Lambda,z), \quad z\in T(Q).
\end{equation*}
By Harnack's inequalities, there exists $H\in\hp(\D)$ such that
the assertion holds for all $z\in T(Q)$.
Since the argument is independent of the Whitney square $Q$, the proof is complete.
\end{proof}

%%%%%%%%%%%%%%%%%%%%%%
%%%% ---- PROOF ---- %%%%
%%%%%%%%%%%%%%%%%%%%%%

\begin{proof}[Proof of Theorem~\ref{thm:n3}]
Let $B=B_\Lambda$ be the Blaschke product with zeros $\Lambda \in\Int\NC$ and let
$f=Be^{Bg}$, where $g\in\H(\D)$ is a~solution of the interpolation problem
\begin{equation} \label{eq:nevint}
g(z_n) = w_n,  \quad w_n = - \frac{B''(z_n)}{2 \big(B'(z_n)\big)^2}, \quad z_n\in\Lambda.
\end{equation}
As $\Lambda\in\Int\NC$, \cite[Theorem~1.2]{HMNT:2004} implies that 
there exists $h_1\in\hp(\D)$ with
\begin{equation} \label{eq:nevii}
  |B'(z_n)| (1-|z_n|^2)  
  = \prod_{z_k \in\Lambda \, : \, z_k\neq z_n} \left| \frac{z_k - z_n}{1-\overline{z}_k z_n} \right|
  \geq e^{-h_1(z_n)}, \quad z_n\in\Lambda.
\end{equation}
Since there exists a~constant $0<C<\infty$ such that 
\begin{equation*}
\log^+ |w_n| = \log^+ \left| \frac{B''(z_n)}{2 \big(B'(z_n)\big)^2} \right| \leq  C + 2\, h_1(z_n), \quad z_n\in\D,
\end{equation*}
\cite[Theorem~1.2]{HMNT:2004} ensures that $\{w_n\} \in \NC\mid \Lambda$.
Therefore we may assume~$g\in\NC$.

By straight-forward computation, $f$ is a~solution of \eqref{eq:de2} for $A\in\H(\D)$, where
\begin{equation} \label{eq:arepp}
A = - \frac{f''}{f} = - \frac{B'' + 2 B' (B'g+Bg')}{B} - {\big( (Bg)' \big)}^2-(Bg)''.
\end{equation}
The interpolation property \eqref{eq:nevint} guarantees that every point $z_n\in\Lambda$ is
a~removable singularity for~$A$. It remains to show that there exists $h\in\hp(\D)$ such that
$|A(z)|(1-|z|^2)^2 \leq e^{h(z)}$, $z\in\D$. Since $Bg \in\NC$, \eqref{eq:nevder} 
implies that  the two right-most terms in \eqref{eq:arepp} are of the desired type.
Since $B'' + 2 B' (B'g+Bg')$ vanishes on the sequence $\Lambda$, Lemma~\ref{lemma:betterest} shows
that there exists $h_2\in \hp(\D)$ such that 
\begin{equation*}
  \big|B''(z) + 2 B'(z) \big(B'(z)g(z)+B(z)g'(z)\big)\big| (1-|z|^2)^{2} \leq \varrho_p(\Lambda, z) \, e^{h_2(z)}, 
  \quad z\in\D.
\end{equation*}
And finally, by \cite[Theorem~1.2]{HMN:preprint}, 
there exists $h_3\in\hp(\D)$ such that $|B(z)| \geq \varrho_p(\Lambda, z) e^{-h_3(z)}$, $z\in \D$.
We deduce Theorem~\ref{thm:n3} by combining the estimates.
\end{proof}

%%%%%%%%%%%%%%%%%%%%%%%
%%%% ---- SECTION ---- %%%%
%%%%%%%%%%%%%%%%%%%%%%%

\section{Proof of Theorem~\ref{thm:carleson}} \label{sec:carleson_a}

The following result is an analogue of Carleson's \cite[Theorem~2]{C:1962},
which characterizes those cases in which the classical $0,1$-interpolation is possible. The
proof of Proposition~\ref{prop:carleson} is based on the
\emph{Nevanlinna corona theorem} by Mortini \cite[Satz~4]{M:1989}: Given $f_1,f_2\in\NC$,
the B\'ezout equation $f_1g_2+f_2g_2 = 1$ can be solved with functions $g_1,g_2\in\NC$
if and only if there exists $h\in\hp(\D)$ such that $|f_1(z)|+|f_2(z)|\geq e^{-h(z)}$, $z\in\D$.

%%%%%%%%%%%%%%%%%%%%%%%%%%% 
%%%% ---- PROPOSITION ---- %%%%
%%%%%%%%%%%%%%%%%%%%%%%%%%%

\begin{proposition} \label{prop:carleson}
Let $\{z_n\},\{\zeta_n\}$ be Blaschke sequences. Then, there exists $f\in\NC$ such that
$f(z_n)=0$ and $f(\zeta_n)=1$ for all $n$ if and only if there exists $h\in\hp(\D)$ such that
\eqref{eq:cca} holds.
\end{proposition}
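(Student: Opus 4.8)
The plan is to obtain both directions from the Nevanlinna corona theorem of Mortini quoted above, with essentially no work beyond canonical factorization in $\NC$. Write $B_1 = B_{\{z_n\}}$ and $B_2 = B_{\{\zeta_n\}}$; these are Blaschke products, hence bounded analytic functions, so $B_1, B_2 \in \NC$. For the sufficiency direction, assume \eqref{eq:cca} holds for some $h \in \hp(\D)$. Then Mortini's theorem yields $g_1, g_2 \in \NC$ with $B_1 g_1 + B_2 g_2 = 1$, and I set $f = B_1 g_1 = 1 - B_2 g_2$. Since $\NC$ is an algebra, $f \in \NC$; from $B_1(z_n) = 0$ we get $f(z_n) = 0$, and from $B_2(\zeta_n) = 0$ we get $f(\zeta_n) = 1 - B_2(\zeta_n) g_2(\zeta_n) = 1$. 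Thus $f$ is the desired interpolating function.

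For the necessity direction, suppose $f \in \NC$ satisfies $f(z_n) = 0$ and $f(\zeta_n) = 1$ for all $n$. Since $f$ vanishes on the Blaschke sequence $\{z_n\}$, the quotient $f_1 = f/B_1$ is analytic, and $f_1 \in \NC$ by the canonical factorization of functions in the Nevanlinna class (dividing the Blaschke factor $B_1$ out of $f$). Similarly $1 - f \in \NC$ vanishes on $\{\zeta_n\}$, so $f_2 = (1-f)/B_2 \in \NC$. Now $B_1 f_1 + B_2 f_2 = f + (1-f) = 1$, so the B\'ezout equation $B_1 g_1 + B_2 g_2 = 1$ is solvable in $\NC$, and Mortini's theorem furnishes $h \in \hp(\D)$ with $|B_1(z)| + |B_2(z)| \geq e^{-h(z)}$, which is precisely \eqref{eq:cca}. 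Only the trivial half of Mortini's theorem is needed here: from $1 \leq (|B_1| + |B_2|)(|f_1| + |f_2|)$ pointwise in $\D$ and $|f_j| \leq e^{h_j}$ with $h_j \in \hp(\D)$, one reads off $|B_1| + |B_2| \geq \tfrac12\, e^{-(h_1 + h_2)}$.

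The one step that is not pure bookkeeping is the claim $f/B_1, (1-f)/B_2 \in \NC$: this uses that dividing a Nevanlinna function by an inner factor dividing it keeps it in $\NC$, equivalently $T(r, f/B_1) \leq T(r,f) + T(r, 1/B_1) + O(1) \leq 2\,T(r,f) + O(1)$, since $T(r, B_1) = N(r, 1/B_1) + O(1) \leq N(r, 1/f) + O(1) \leq T(r,f) + O(1)$. I do not anticipate any genuine obstacle; the proposition is a faithful transcription of Carleson's $0,1$-interpolation argument with the corona theorem replaced by its Nevanlinna analogue.
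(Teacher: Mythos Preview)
Your proof is correct and follows essentially the same route as the paper's. Both directions rely on the Nevanlinna corona theorem exactly as you do: for sufficiency, solve $B_1g_1+B_2g_2=1$ and take $f=B_1g_1$; for necessity, factor $f=B_1f_1$ and $1-f=B_2f_2$ through the Blaschke products and read off the lower bound from $|f_j|\le e^{h_j}$. The paper states the necessity slightly more directly via $1=|B_1g_1-B_2g_2|\le e^{h_1+h_2}(|B_1|+|B_2|)$, avoiding your extra factor $\tfrac12$, but this is cosmetic.
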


%%%%%%%%%%%%%%%%%%%%%%
%%%% ---- PROOF ---- %%%%
%%%%%%%%%%%%%%%%%%%%%%

\begin{proof}
Assume that there exists $f\in\NC$ such that $f(z_n)=0$ and $f(\zeta_n)=1$ for all~$n$.
By the classical factorization theorem, there exist functions $g_1,g_2\in\NC$ such that
$f=B_{\{z_n\}} g_1 = 1+B_{\{\zeta_n\}}g_2$. Here $B_{\{z_n\}}$ and $B_{\{\zeta_n\}}$ are Blaschke products
with zeros $\{z_n\}$ and $\{\zeta_n\}$, respectively. As $g_1,g_2\in\NC$, there exist $h_1,h_2\in\hp(\D)$ such that 
$|g_1| \leq e^{h_1}$ and $|g_2| \leq e^{h_2}$. We deduce
\begin{equation*}
1  = \big| B_{\{z_n\}} g_1 - B_{\{\zeta_n\}}g_2 \big| 
 \leq  e^{h_1+h_2} \big( | B_{\{z_n\}} | + | B_{\{\zeta_n\}} |\big),
\end{equation*}
which proves the first part of the assertion.

Assume that there exists $h\in\hp(\D)$ such that \eqref{eq:cca} holds.
By the Nevanlinna corona theorem, 
there exist $g_1,g_2\in\NC$ such that $B_{\{ z_n \}} g_1 + B_{\{\zeta_n\}} g_2=1$.
Then, the function $f=B_{\{ z_n \}} g_1 \in \NC$ satisfies the desired $0,1$-interpolation.
\end{proof}

%%%%%%%%%%%%%%%%%%%%%%
%%%% ---- PROOF ---- %%%%
%%%%%%%%%%%%%%%%%%%%%%

\begin{proof}[Proof of Theorem~\ref{thm:carleson}]
By Proposition~\ref{prop:carleson}, there exists $g\in\NC$ such that
$g(z_n)=0$ and $g(\zeta_n)=1$ for all $n$. Now
$f(z)=\exp(\log \alpha + g(z) \log(\beta/\alpha))$, $z\in\D$,
satisfies the desired interpolation property, and is a~zero-free solution of \eqref{eq:de2} for $A\in\H(\D)$,
\begin{equation*}
A(z) = - \frac{f''(z)}{f(z)} = - \left( g'(z) \log\frac{\beta}{\alpha} \right)^2 - g''(z) \log\frac{\beta}{\alpha}, \quad z\in\D.
\end{equation*}
By \eqref{eq:nevder}, there exists $H\in\hp(\D)$ such that $|A(z)|(1-|z|^2)^2 \leq e^{H(z)}$,~$z\in\D$.
\end{proof}

%%%%%%%%%%%%%%%%%%%%%%%
%%%% ---- SECTION ---- %%%%
%%%%%%%%%%%%%%%%%%%%%%%

\section{Proofs of Theorems~\ref{thm:n1} and \ref{thm:n2}, and Proposition~\ref{prop:n4}} 

The following proof proceeds along the same lines as that in \cite[p.~129]{S:2012}.

%%%%%%%%%%%%%%%%%%%%%%
%%%% ---- PROOF ---- %%%%
%%%%%%%%%%%%%%%%%%%%%%

\begin{proof}[Proof of Theorem~\ref{thm:n1}]
If $f_1,f_2$ are linearly
independent solutions of \eqref{eq:de2} for $A\in\H(\D)$, then 
$W(f_1,f_2)\, A = f_1'f_2''-f_1''f_2'$ and
$(f_1/f_2)^\# = |W(f_1,f_2)|/(|f_1|^2+|f_2|^2)$. 
Since $W(f_1,f_2)$ is a~non-zero complex constant,
the estimate \eqref{eq:nevder} and the fact $f_1,f_2\in\NC$ imply that there exists $h_1\in \hp(\D)$ such that
$|A(z)|(1-|z|^2)^3 \leq e^{h_1(z)}$, $z\in\D$. Moreover, the Cauchy-Schwarz inequality \eqref{eq:csaa}
and the estimate \eqref{eq:nevder} show that there exists $h_2\in\hp(\D)$ such that 
$(f_1/f_2)^{\#}(z)(1-|z|^2)^2 \leq e^{h_2(z)}$, $z\in\D$. The claim follows by choosing $H=h_1+h_2 \in \hp(\D)$.
\end{proof}

The proof of Theorem~\ref{thm:n2} is analogous to that proof of Theorem~\ref{thm:converse_finalx},
which is presented in the end of Section~\ref{sec:ppp}.

%%%%%%%%%%%%%%%%%%%%%%
%%%% ---- PROOF ---- %%%%
%%%%%%%%%%%%%%%%%%%%%%

\begin{proof}[Proof of Theorem~\ref{thm:n2}]
By \eqref{eq:toloN} and \cite[Theorem~1]{HMN:preprint}, the ideal $I_{\NC}(f_1,f_2)$ contains a~Blaschke product $B$
whose zero-sequence belongs to $\Int\NC$.
This is equivalent to the fact that there exist functions $g_1,g_2\in \NC$ such that $f_1 g_1 + f_2 g_2 = B$. 
Differentiate $f_1 g_1 + f_2 g_2 = B$ twice, and apply \eqref{eq:de2} to $f_1''$ and $f_2''$
to obtain \eqref{eq:diff_twice}. Note that $A\in\H(\D)$ by assumption.
As in the proof of Theorem~\ref{thm:n3}, 
we conclude that there exists $H\in\hp(\D)$ such that $\sup_{z\in\D} |A(z)|(1-|z|^2)^2 \leq e^{H(z)}$, $z\in\D$.
\end{proof}

%%%%%%%%%%%%%%%%%%%%%%
%%%% ---- PROOF ---- %%%%
%%%%%%%%%%%%%%%%%%%%%%

\begin{proof}[Proof of Proposition~\ref{prop:n4}]
Proposition~\ref{prop:n4} follows directly from \cite[Theorem~15]{CGHR:2013} if 
$\psi : \D \to (0,1/2)$ given by $\psi(z) = e^{-H(z)/2}e^{-1}$, $z\in\D$ and $H\in\hp(\D)$, 
satisfies $\sup_{a,z\in\D} \, \psi(a)/\psi\big( \varphi_a(\psi(a)z) \big) < \infty$.
Now
\begin{align*}
&  \sup_{a,z\in\D} \,\exp\!\left( \frac{H(a)}{2} \left( \frac{H\big( \varphi_a(e^{-H(a)/2}e^{-1}z)\big)}{H(\varphi_a(0))} - 1 \right) \right)\\
& \qquad  \leq \sup_{a,z\in\D} \,\exp\!\left( \frac{H(a)}{2} 
    \left( \frac{1+\varrho_p\big(0, e^{-H(a)/2}e^{-1}z\big)}{1-\varrho_p\big(0,e^{-H(a)/2}e^{-1}z\big)} - 1 \right) \right)
\end{align*}
by Harnack's inequalities. This is bounded by
\begin{equation*}
\sup_{0\leq x < \infty} \exp\!\left( \frac{x}{2}
    \left( \frac{1+e^{-x/2}e^{-1}}{1-e^{-x/2}e^{-1}} - 1 \right) \right)< \frac{3}{2},
\end{equation*}
which implies the assertion.
\end{proof}

%%%%%%%%%%%%%%%%%%%%%%%%%%
%%%% ---- SUBSECTION ---- %%%%
%%%%%%%%%%%%%%%%%%%%%%%%%%

\subsection*{Separation of zeros and critical points}
We proceed to state an~analogue of Proposition~\ref{prop:sepa}.
If $f\in\H(\D)$ and
\begin{equation} \label{eq:nnm}
\nm{f} = \sup_{z\in\D} \, |f(z)| (1-|z|^2)^\alpha e^{-h(z)} < \infty
\end{equation}
for $0\leq \alpha< \infty$ and $h\in\hp(\D)$, then there exists $C=C(\alpha)>0$ such that
\begin{equation*} \label{eq:dsn}
\begin{split}
   \Big| |f(z_1)|(1-|z_1|^2)^\alpha e^{-h(z_1)} - |f(z_2)|(1-|z_2|^2)^\alpha e^{-h(z_2)} \Big|
   \leq C \, \varrho_p(z_1,z_2) \, \nm{f},
\end{split}
\end{equation*}
for all points $z_1,z_2\in\D$ with $\varrho_p(z_1,z_2)\leq 1/2$.
This estimate follows immediately from \eqref{eq:ds}: If $f\in\H(\D)$
satisfies \eqref{eq:nnm} for $0\leq \alpha< \infty$ and $h\in\hp(\D)$,
then \eqref{eq:ds} can be applied to $f e^{-h - i h^\star} \in H^\infty_\alpha$, where $h^\star$ is a~harmonic conjugate of $h$.

%%%%%%%%%%%%%%%%%%%%%%%%%%%
%%%% ---- PROPOSITION ---- %%%%
%%%%%%%%%%%%%%%%%%%%%%%%%%%

\begin{proposition} \label{prop:sepaN}
Let $f_1,f_2\in \NC$ be linearly independent solutions of \eqref{eq:de2} for $A\in\H(\D)$.
\begin{itemize}
\item[\rm (i)]
If $z_1\in\D$ is a~zero and $z_2\in\D$ is a~critical point of $f_1$, then there exists $h\in\hp(\D)$ such that 
\begin{equation} \label{eq:sepa}
\varrho_p(z_1,z_2) \gtrsim \max\Big\{ (1-|z_1|)e^{-h(z_1)}, (1-|z_2|)e^{-h(z_2)} \Big\}.
\end{equation}

\item[\rm (ii)]
If $z_1\in\D$ is a~zero of $f_1$, and $z_2\in\D$ is a~zero of $f_2$, then 
there exists $h\in\hp(\D)$ such that \eqref{eq:sepa} holds.

\item[\rm (iii)]
If $z_1\in\D$ is a~critical point of $f_1$, and $z_2\in\D$ is a~critical point of $f_2$, then 
there exists $h\in\hp(\D)$ such that \eqref{eq:sepa} holds.
\end{itemize}
\end{proposition}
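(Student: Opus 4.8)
The plan is to transcribe the proof of Proposition~\ref{prop:sepa}(iii)--(v), replacing the $H^\infty$ bounds by Nevanlinna bounds and the Lipschitz estimate \eqref{eq:ds} by its Nevanlinna analogue, i.e.\ the displayed inequality stated immediately before Proposition~\ref{prop:sepaN}. The preliminary step is to fix the relevant majorant: since $f_1,f_2\in\NC$, there is $h_0\in\hp(\D)$ with $|f_j(z)|\leq e^{h_0(z)}$ for $z\in\D$, $j=1,2$, and by \eqref{eq:nevder} (applied with $k=1$) there is a~universal constant $C$ with $|f_j'(z)|(1-|z|^2)\leq e^{Ch_0(z)}$ for $z\in\D$, $j=1,2$. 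Thus each $f_j$ satisfies \eqref{eq:nnm} with $\alpha=0$ and $h=h_0$, and each $f_j'$ satisfies \eqref{eq:nnm} with $\alpha=1$ and $h=Ch_0$, all with norm at most~$1$; hence the Lipschitz estimate before Proposition~\ref{prop:sepaN} applies to $f_j$ and to $f_j'$ whenever $\varrho_p(z_1,z_2)\leq 1/2$.

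For part~(i), if $\varrho_p(z_1,z_2)>1/2$ then \eqref{eq:sepa} is trivial, since its right-hand side is smaller than~$1$ for any $h\in\hp(\D)$. If $\varrho_p(z_1,z_2)\leq 1/2$, then $f_1(z_1)=0=f_1'(z_2)$ forces the (constant) Wronskian to collapse, $W(f_1,f_2)=f_1(z_2)f_2'(z_2)=-f_1'(z_1)f_2(z_1)$. Applying the Lipschitz estimate to $f_1$, which vanishes at $z_1$, gives $|f_1(z_2)|e^{-h_0(z_2)}\lesssim\varrho_p(z_1,z_2)$; inserting $|f_1(z_2)|=|W(f_1,f_2)|/|f_2'(z_2)|$ and $|f_2'(z_2)|(1-|z_2|^2)\leq e^{Ch_0(z_2)}$ yields $\varrho_p(z_1,z_2)\gtrsim|W(f_1,f_2)|\,(1-|z_2|^2)e^{-(C+1)h_0(z_2)}$. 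Symmetrically, applying the Lipschitz estimate to $f_1'$, which vanishes at $z_2$, and inserting $|f_1'(z_1)|=|W(f_1,f_2)|/|f_2(z_1)|$ together with $|f_2(z_1)|\leq e^{h_0(z_1)}$ yields $\varrho_p(z_1,z_2)\gtrsim|W(f_1,f_2)|\,(1-|z_1|^2)e^{-(C+1)h_0(z_1)}$. Adding to $(C+1)h_0$ the non-negative constant $\max\{0,\log(c/|W(f_1,f_2)|)\}$, where $c$ is the implied absolute constant, produces $h\in\hp(\D)$ for which \eqref{eq:sepa} holds.

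Parts~(ii) and~(iii) are proved the same way, only the bookkeeping of which function vanishes where changes. For~(ii) one uses $W(f_1,f_2)=-f_1'(z_1)f_2(z_1)=f_1(z_2)f_2'(z_2)$, applying the Lipschitz estimate to $f_1$ at the zero $z_1$ (to obtain the $(1-|z_2|^2)$-term via $|f_1(z_2)|=|W(f_1,f_2)|/|f_2'(z_2)|$) and to $f_2$ at the zero $z_2$ (to obtain the $(1-|z_1|^2)$-term via $|f_2(z_1)|=|W(f_1,f_2)|/|f_1'(z_1)|$). For~(iii) one uses $W(f_1,f_2)=f_1(z_1)f_2'(z_1)=-f_1'(z_2)f_2(z_2)$, applying the Lipschitz estimate to $f_1'$ at the critical point $z_1$ and to $f_2'$ at the critical point $z_2$. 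In each case the two inequalities produced are exactly the two entries of the maximum in \eqref{eq:sepa}, and the final harmonic function is obtained as above.

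I do not expect a~substantive obstacle: the argument is a~routine adaptation of Proposition~\ref{prop:sepa}. The only points to watch are keeping the identity $W(f_1,f_2)=f_1f_2'-f_1'f_2$ collapsing correctly at both $z_1$ and $z_2$, and ensuring the function supplied at the end genuinely lies in $\hp(\D)$ --- which is arranged by adding a~non-negative (hence harmonic) constant to $(C+1)h_0$ so as to absorb the constant $|W(f_1,f_2)|$, whose size is not controlled.
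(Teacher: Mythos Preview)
Your argument is correct and is exactly the adaptation the paper has in mind: the paper omits the proof of Proposition~\ref{prop:sepaN} entirely, noting only that it is analogous to Proposition~\ref{prop:sepa}, and you have carried out precisely that analogy using the Nevanlinna Lipschitz estimate displayed just before the statement. The bookkeeping with the Wronskian, the application of \eqref{eq:nevder} to pass from $f_j$ to $f_j'$, and the absorption of $|W(f_1,f_2)|$ into the harmonic majorant by adding a non-negative constant are all handled correctly.
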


The proof of Proposition~\ref{prop:sepaN} is omitted.

%%%%%%%%%%%%%%%%%%%%%%%
%%%% ---- SECTION ---- %%%%
%%%%%%%%%%%%%%%%%%%%%%%

\section{Proofs of Theorem~\ref{theorem:c2} and Corollary~\ref{cor:c2}} \label{sec:c2}

The proof of Theorem~\ref{theorem:c2} is based on a~smoothness property,
which is considered first. Let $\omega$ be a radial weight on $\D$. Then,
\begin{equation*}
  \varrho_\omega(z_1,z_2) = \int_{\langle z_1, z_2 \rangle} \frac{|dz|}{\omega(z)},
  \quad z_1,z_2\in\D,
\end{equation*}
defines a~distance function. Here, we integrate along
the hyperbolic segment $\langle z_1,z_2 \rangle$ between the points $z_1,z_2\in\D$,
where the hyperbolic segment is a~closed subset of the corresponding hyperbolic geodesic. 
For $\omega(z)=1-|z|^2$, $z\in\D$,
the function~$\varrho_\omega$ reduces to the standard hyperbolic distance $\varrho_h$:
\begin{equation*}
  \varrho_h(z_1,z_2) = \frac{1}{2} \log\frac{1+\varrho_p(z_1,z_2)}{1-\varrho_p(z_1,z_2)},
  \quad \varrho_p(z_1,z_2)=\left| \frac{z_2-z_1}{1-\overline{z}_2z_1}\right|,
  \quad z_1,z_2\in\D.
\end{equation*}

%%%%%%%%%%%%%%%%%%%%%%
%%%% ---- LEMMA ---- %%%%
%%%%%%%%%%%%%%%%%%%%%%

\begin{lemma} \label{lemma:smoothness}
Let $f_1,f_2$ be linearly independent solutions of \eqref{eq:de2} for $A\in\H(\D)$,
and define $u = -\log \, (f_1/f_2)^{\#}$.
Let $\omega$ be a~radial weight. If 
\begin{equation} \label{eq:presmooth}
\sup_{z\in\D} \, |\nabla u(z)| \, \omega(z) \leq \Lambda < \infty,
\end{equation}
then
\begin{equation} \label{eq:smooth}
  e^{-\Lambda  \varrho_\omega(z_1,z_2)}
  \leq \frac{|f_1(z_1)|^2+|f_2(z_1)|^2}{|f_1(z_2)|^2+|f_2(z_2)|^2} 
  \leq e^{ \Lambda  \varrho_\omega(z_1,z_2)}, \quad z_1,z_2\in\D.
\end{equation}
Conversely, if \eqref{eq:smooth} holds for some~constant $0<\Lambda<\infty$,
then \eqref{eq:presmooth} is satisfied.
\end{lemma}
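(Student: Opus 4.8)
The plan is to reduce \eqref{eq:smooth} to a Lipschitz estimate for $u$ with respect to the distance $\varrho_\omega$, and then verify the two implications by elementary calculus. First I would record that, by (the proof of) Theorem~\ref{thm:repres}(i), one has $(f_1/f_2)^\# = |W(f_1,f_2)|/(|f_1|^2+|f_2|^2)$, whence
\begin{equation*}
  e^{u} = \frac{|f_1|^2+|f_2|^2}{|W(f_1,f_2)|}, \qquad
  \frac{|f_1(z_1)|^2+|f_2(z_1)|^2}{|f_1(z_2)|^2+|f_2(z_2)|^2} = e^{u(z_1)-u(z_2)}, \quad z_1,z_2\in\D.
\end{equation*}
Since $|f_1|^2+|f_2|^2$ is strictly positive in $\D$ (as $W(f_1,f_2)\neq 0$), the function $(f_1/f_2)^\#$ is smooth and strictly positive, so $u\in C^2(\D)$ with finite gradient throughout $\D$ (cf.\ the remark following Theorem~\ref{thm:repres}). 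Hence \eqref{eq:smooth} is equivalent to
\begin{equation*}
  |u(z_1)-u(z_2)| \leq \Lambda\,\varrho_\omega(z_1,z_2), \quad z_1,z_2\in\D,
\end{equation*}
and it suffices to show that this estimate is in turn equivalent to \eqref{eq:presmooth}.

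For the implication \eqref{eq:presmooth}$\,\Rightarrow\,$\eqref{eq:smooth} I would fix $z_1,z_2\in\D$, let $\gamma\colon[0,1]\to\D$ parametrize the hyperbolic segment $\langle z_1,z_2\rangle$ from $z_1$ to $z_2$, and use the fundamental theorem of calculus (valid since $u\in C^1(\D)$ and $\gamma$ is smooth) together with the Cauchy--Schwarz inequality and then \eqref{eq:presmooth}:
\begin{align*}
  |u(z_2)-u(z_1)| &= \left|\int_0^1 \nabla u(\gamma(t))\cdot\gamma'(t)\,dt\right|
  \leq \int_{\langle z_1,z_2\rangle} |\nabla u(z)|\,|dz|\\
  &\leq \int_{\langle z_1,z_2\rangle} \frac{\Lambda}{\omega(z)}\,|dz|
  = \Lambda\,\varrho_\omega(z_1,z_2).
\end{align*}

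For the converse I would fix $z_0\in\D$ and a unit vector $v\in\C$, and apply \eqref{eq:smooth} to the pair $z_0,\,z_0+hv$ as $h\to 0^+$. Since the hyperbolic segment $\langle z_0,z_0+hv\rangle$ differs from the Euclidean one by $O(h^2)$ and $\omega$ is continuous and strictly positive near $z_0$, one has $\varrho_\omega(z_0,z_0+hv) = (h/\omega(z_0))\bigl(1+o(1)\bigr)$, so dividing $|u(z_0+hv)-u(z_0)|\leq \Lambda\,\varrho_\omega(z_0,z_0+hv)$ by $h$ and letting $h\to0^+$ gives $|D_v u(z_0)|\leq \Lambda/\omega(z_0)$ for the directional derivative $D_v u(z_0)$; taking the supremum over all unit $v$ yields $|\nabla u(z_0)|\,\omega(z_0)\leq\Lambda$, and $z_0\in\D$ was arbitrary. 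I do not expect a genuine obstacle here: the only points requiring care are the smoothness of $u$ (so that no singular behavior arises along the integration paths) and the short-segment asymptotics of $\varrho_\omega$ used in the converse direction, both of which are immediate from the explicit formula for $(f_1/f_2)^\#$ and the continuity of the radial weight $\omega$.
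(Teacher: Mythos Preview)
Your proposal is correct and follows essentially the same route as the paper: both directions are handled by the same ideas (integrating $\nabla u$ along the hyperbolic segment for the forward implication, and using the local asymptotics $\varrho_\omega(z_0,z_0+hv)\sim h/\omega(z_0)$ to recover the gradient bound for the converse). Your explicit reduction to the Lipschitz inequality $|u(z_1)-u(z_2)|\leq\Lambda\,\varrho_\omega(z_1,z_2)$ via $e^{u}=(|f_1|^2+|f_2|^2)/|W(f_1,f_2)|$ is a clean way to frame the argument, but it is the same proof.
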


%%%%%%%%%%%%%%%%%%%%%%
%%%% ---- PROOF ---- %%%%
%%%%%%%%%%%%%%%%%%%%%%

\begin{proof}
Assume that \eqref{eq:presmooth} holds.
Let $z_1,z_2\in\D$ be distinct points, and let $\gamma=\gamma(t)$, $0\leq t \leq 1$, be a~parametrization
of $\langle z_1,z_2 \rangle$. 
Schwarz's inequality and \eqref{eq:presmooth} imply
\begin{align*}
  \left| \log \frac{|f_1(z_1)|^2+|f_2(z_1)|^2}{|f_1(z_2)|^2+|f_2(z_2)|^2} \right|
  & = \big| u(z_1) - u(z_2) \big|
    \leq \left| \int_0^1 \nabla u (\gamma(t)) \cdot \gamma'(t)  \, dt \right| \\
& \leq \int_0^1 | \nabla u (\gamma(t)) | \, |\gamma'(t) | \, dt 
\leq \Lambda \, \varrho_\omega(z_1,z_2).
\end{align*}
From this estimate we deduce \eqref{eq:smooth}.

Assume that \eqref{eq:smooth} holds for some constant $0<\Lambda<\infty$. Fix $z_2\in\D$.
Since 
\begin{equation*}
\lim_{z_1\to z_2} \frac{|z_1-z_2|}{\varrho_h(z_1,z_2)} 
= \lim_{z_1\to z_2} \frac{\varrho_p(z_1,z_2)}{\frac{1}{2} \log\frac{1+\varrho_p(z_1,z_2)}{1-\varrho_p(z_1,z_2)}}
     \cdot |1-\overline{z}_1z_2| = 1-|z_2|^2, 
\end{equation*}
and
\begin{equation*}
\frac{|z_1-z_2|}{\varrho_h(z_1,z_2)}  \cdot \frac{1}{\max_{z\in \langle z_1,z_2 \rangle} \frac{1-|z|^2}{\omega(z)}} 
\leq\frac{|z_1-z_2|}{\varrho_\omega(z_1,z_2)} \leq  \frac{|z_1-z_2|}{\varrho_h(z_1,z_2)}  \cdot \frac{1}{\min_{z\in \langle z_1,z_2 \rangle} \frac{1-|z|^2}{\omega(z)}}
\end{equation*}
for any $z_1\in\D$, we conclude that $\lim_{z_1\to z_2} |z_1-z_2|/\varrho_\omega(z_1,z_2) = \omega(z_2)$
by the continuity of $\omega$. Therefore,
\begin{align*}
\left| \nabla u(z_2) \right|  \omega(z_2)
  =  \lim_{z_1\to z_2} 
  \left| \frac{u(z_1) - u(z_2)}{z_1-z_2} \right| \frac{|z_1-z_2|}{\varrho_\omega(z_1,z_2)}
 \leq \lim_{z_1\to z_2} \frac{\Lambda \, \varrho_\omega(z_1,z_2)}
  {\varrho_\omega(z_1,z_2)} = \Lambda.
\end{align*}
This completes the proof of Lemma~\ref{lemma:smoothness}.
\end{proof}

The following lemma is important for our cause due to the representation \eqref{eq:preli}.

%%%%%%%%%%%%%%%%%%%%%%
%%%% ---- LEMMA ---- %%%%
%%%%%%%%%%%%%%%%%%%%%%

\begin{lemma} \label{lemma:derprop}
Let $f_1,f_2$ be linearly independent solutions of \eqref{eq:de2} for $A\in\H(\D)$,
and define $u = -\log \, (f_1/f_2)^{\#}$.
Suppose that $\omega$ is a~regular weight which satisfies $\sup_{z\in\D}\, \omega(z)/(1-|z|) < \infty$.
If $|\nabla u| \in L^\infty_\omega$, then
\begin{equation*}
\frac{|f_1^{(j)}|+|f_2^{(j)}|}{|f_1| + |f_2|} \in L^\infty_{\omega^j}, \quad j\in\N.
\end{equation*}
\end{lemma}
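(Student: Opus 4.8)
The plan is to reduce the statement to Cauchy's estimate, after noticing that — once the Wronskian is normalized — the function $e^{u}$ equals $|f_1|^2+|f_2|^2$, so that the hypothesis $|\nabla u|\in L^\infty_\omega$ is really a bound on the \emph{multiplicative} oscillation of $|f_1|^2+|f_2|^2$ on $\omega$-sized Euclidean discs. Since replacing $(f_1,f_2)$ by $(f_1,f_2)/\sqrt{W(f_1,f_2)}$ changes neither $u$, nor $f_1/f_2$, nor any ratio $(|f_1^{(j)}|+|f_2^{(j)}|)/(|f_1|+|f_2|)$, we may assume $W(f_1,f_2)=1$; then $(f_1/f_2)^{\#}=1/(|f_1|^2+|f_2|^2)$ (cf.\ the proof of Theorem~\ref{thm:repres}), whence $e^{u}=|f_1|^2+|f_2|^2$. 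Put $\Lambda=\sup_{z\in\D}|\nabla u(z)|\,\omega(z)$.

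First I would fix the geometry of the weight. Because $\omega$ is regular and $\omega(z)\le C_0(1-|z|)$, $z\in\D$, for some $C_0<\infty$, there is a constant $c_0=c_0(\omega)\in(0,1)$ with $2c_0C_0<1$ such that, for every $z_0\in\D$ and $r:=c_0\omega(z_0)$, the disc $D(z_0,2r)$ lies in $\D$ and $\big|\,|z|-|z_0|\,\big|\le 2c_0C_0(1-|z_0|)$ on it; hence by \eqref{eq:regular} there is $C_1=C_1(\omega)$ with $C_1^{-1}\omega(z_0)\le\omega(z)\le C_1\omega(z_0)$ for all $z\in D(z_0,2r)$. Integrating $\nabla u$ along the segment from $z_0$ to $z\in D(z_0,2r)$,
\[
|u(z)-u(z_0)|\le\Big(\sup_{D(z_0,2r)}|\nabla u|\Big)|z-z_0|\le\frac{\Lambda C_1}{\omega(z_0)}\,2c_0\omega(z_0)=2c_0C_1\Lambda,
\]
so $|f_1(z)|^2+|f_2(z)|^2=e^{u(z)}\le e^{2c_0C_1\Lambda}\big(|f_1(z_0)|^2+|f_2(z_0)|^2\big)$ on $D(z_0,2r)$. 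Using $|f_i|\le(|f_1|^2+|f_2|^2)^{1/2}\le|f_1|+|f_2|$, I then get a constant $C_2=C_2(\omega,\Lambda)$ with
\[
\sup_{D(z_0,2r)}\big(|f_1|+|f_2|\big)\le C_2\big(|f_1(z_0)|+|f_2(z_0)|\big),\qquad z_0\in\D.
\]

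It then remains to handle the derivatives, uniformly in $j$. Fix $j\in\N$ and $z_0\in\D$, and apply Cauchy's integral formula to $f_1$ and $f_2$ on the circle $\{|z-z_0|=r\}$:
\[
|f_i^{(j)}(z_0)|\le\frac{j!}{r^{\,j}}\sup_{|z-z_0|=r}|f_i|\le\frac{j!\,C_2}{r^{\,j}}\big(|f_1(z_0)|+|f_2(z_0)|\big),\qquad i=1,2.
\]
Summing in $i$ and dividing by $|f_1(z_0)|+|f_2(z_0)|$ gives
\[
\frac{|f_1^{(j)}(z_0)|+|f_2^{(j)}(z_0)|}{|f_1(z_0)|+|f_2(z_0)|}\le\frac{2\,j!\,C_2}{c_0^{\,j}}\cdot\frac{1}{\omega(z_0)^{j}},\qquad z_0\in\D,
\]
i.e.\ $(|f_1^{(j)}|+|f_2^{(j)}|)/(|f_1|+|f_2|)\in L^\infty_{\omega^j}$, which is the assertion.

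The only step needing some care is the weight geometry: one must check simultaneously that the Euclidean discs $D(z_0,2c_0\omega(z_0))$ are relatively compact in $\D$ and that $\omega$ is comparable to $\omega(z_0)$ throughout them, both of which follow from regularity once $\omega(z_0)$ is arranged to be a fixed small fraction of $1-|z_0|$. Everything else is just Cauchy's estimate together with the identification $e^{u}=|f_1|^2+|f_2|^2$, which turns the first-order control of $u$ into a two-sided comparison of $|f_1|^2+|f_2|^2$ at nearby points.
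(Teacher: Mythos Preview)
Your proof is correct and follows essentially the same strategy as the paper: use the bound on $|\nabla u|$ to show that $e^{u}=|f_1|^2+|f_2|^2$ has bounded multiplicative oscillation on Euclidean discs of radius comparable to $\omega(z_0)$, and then apply Cauchy's estimate on such a disc. The only difference is cosmetic---the paper routes the oscillation bound through Lemma~\ref{lemma:smoothness} (integration along hyperbolic geodesics and the distance $\varrho_\omega$), whereas you integrate $\nabla u$ directly along the Euclidean segment inside $D(z_0,2c_0\omega(z_0))$, which is slightly more self-contained but otherwise the same argument.
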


%%%%%%%%%%%%%%%%%%%%%%
%%%% ---- PROOF ---- %%%%
%%%%%%%%%%%%%%%%%%%%%%

\begin{proof}
By the assumption,
there exists a~positive constant $c$ such that 
the discs $\mathfrak{D}(z)= D(z,c \,\omega(z))$
satisfy $\mathfrak{D}(z) \subset D(z,(1-|z|)/2)$, $z\in\D$.
Let $\zeta\in \partial \mathfrak{D}(z)$. Since $\langle z, \zeta\rangle \subset D(z,(1-|z|)/2)$, 
a~straight-forward argument based on \eqref{eq:regular} reveals
\begin{equation*}
\varrho_\omega(z,\zeta) \lesssim \frac{1-|z|^2}{\omega(z)} \, \varrho_h(z,\zeta)
\lesssim \frac{1-|z|^2}{\omega(z)} \, \varrho_p(z,\zeta) \lesssim \frac{|z-\zeta|}{\omega(z)} = c.
\end{equation*}
Therefore $\sup_{z\in\D} \max_{\zeta\in \partial \mathfrak{D}(z)}  \varrho_\omega(z,\zeta) < \infty$.

By Cauchy's integral formula,
\begin{equation} \label{eq:cader}
\begin{split}
  |f_1^{(j)}(z)|+|f_2^{(j)}(z)| 
  & \leq 2 \, \max\big\{ |f_1^{(j)}(z)|, |f_2^{(j)}(z)| \big\} \\
   & \leq \left( \, \max_{\zeta\in \partial \mathfrak{D}(z)} 
        \big( |f_1(\zeta)| + |f_2(\zeta)| \big) \right) \, \frac{2 j!}{c^j \, \omega(z)^j}, \quad z\in\D.
\end{split}
\end{equation}
Now \eqref{eq:cader} and Lemma~\ref{lemma:smoothness} imply
\begin{align*}
\frac{|f_1^{(j)}(z)|+|f_2^{(j)}(z)|}{|f_1(z)| + |f_2(z)|} 
&  \leq \frac{2 j! \sqrt{2}}{c^j \, \omega(z)^j}  \!\left( \max_{\zeta\in \partial \mathfrak{D}(z)}  
  \frac{|f_1(\zeta)|^2 + |f_2(\zeta)|^2}{|f_1(z)|^2 + |f_2(z)|^2} \right)^{1/2}\\
&  \leq \frac{2 j! \sqrt{2}}{c^j \, \omega(z)^j} \, 
  \exp\left( \frac{\nm{|\nabla u| }_{L^\infty_\omega}}{2} \max_{\zeta\in \partial \mathfrak{D}(z)} \, \varrho_\omega(z,\zeta) \right)
  \lesssim \frac{1}{\omega(z)^j}
\end{align*}
for $z\in\D$. The assertion of Lemma~\ref{lemma:derprop} follows.
\end{proof}

Finally, proceed to prove Theorem~\ref{theorem:c2}.
We take advantage of Yamashita's \cite[Corollary to Theorem~2, p.~161]{Y:2002}, 
which uses the following notation.
For a~meromorphic function $f$ and $z\in\D$, let $\rho(z,f)$ be the maximum
of $0<r\leq  1$ such that $f$ is univalent in $\Delta_p(z,r)$, and let
$\rho_a(z,f)$ be the maximum of $0<r\leq 1$ such that $f(w)\neq -1/\overline{f(z)}$,
which is the antipodal point of $f(z)$ in the Riemann sphere.

%%%%%%%%%%%%%%%%%%%%%%
%%%% ---- PROOF ---- %%%%
%%%%%%%%%%%%%%%%%%%%%%

\begin{proof}[Proof of Theorem~\ref{theorem:c2}]
First, assume that $|\nabla u|\in L^\infty_\omega$.
By the representation \eqref{eq:preli} and Lemma~\ref{lemma:derprop}, we conclude that $A\in H^\infty_{\omega^2}$.
By Theorem~\ref{thm:repres}(i) and (ii),
\begin{equation*}
  4 \, \Big( \big( f_1/f_2\big)^\# \Big)^2 
  = \Delta u 
  \leq \frac{\Delta e^u}{e^u} 
  = \frac{|f_1'|^2+|f_2'|^2}{|f_1|^2+|f_2|^2} 
  \leq 2 \left( \frac{|f_1'|+|f_2'|}{|f_1|+|f_2|} \right)^2,
\end{equation*}
and therefore $(f_1/f_2)^\# \in L^\infty_\omega$ by Lemma~\ref{lemma:derprop}.

Second, let $A \in H^\infty_{\omega^2}$ and $(f_1/f_2)^\# \in L^\infty_\omega$. 
Since $f_1/f_2$ is meromorphic in $\D$ and has zero-free spherical derivative,
Yamashita's \cite[Corollary to Theorem~2, p.~161]{Y:2002} implies
\begin{equation*}
(1-|z|^2) \left| \frac{-\overline{z}}{1-|z|^2} - \partial u(z) \right|
\leq \frac{2}{\min\{ \rho(z,f_1/f_2), \rho_a(z,f_1/f_2) \}}, \quad z\in\D.
\end{equation*}
We deduce
\begin{equation*}
|  \nabla u(z) | \leq \frac{2}{1-|z|^2} \left( 1 + \frac{2}{\min\big\{ \rho(z,f_1/f_2), \rho_a(z,f_1/f_2) \big\}} \right),
\quad z\in\D.
\end{equation*}
Denote $h=f_1/f_2$.
It suffices to show that both $\rho(z,h)$ and $\rho_a(z,h)$
are bounded from below by a constant multiple of $\omega(z)/(1-|z|^2)$ as $|z|\to 1^-$.

Let $\psi: \D \to (0,\infty)$ be the weight $\psi(z)=c \, \omega(z)/(1-|z|^2)$, where $0<c<1$ is a~sufficiently
small constant whose value is determined later. By the assumption, we may assume that $\psi: \D \to (0,1/2)$
and therefore $\varphi_a(\psi(a)z) \in \Delta_p(a,1/2)$ for all $a,z\in\D$. By \eqref{eq:regular} and standard estimates,
\begin{equation*}
\sup_{a,z\in\D} \, \frac{\psi(a)}{\psi\big(\varphi_a(\psi(a)z)\big)}
= \sup_{a,z\in\D} \, \frac{\omega(a)}{\omega\big(\varphi_a(\psi(a)z)\big)} \cdot \frac{1-|\varphi_a(\psi(a)z)|^2}{1-|a|^2}
<\infty.
\end{equation*}
Function $h$ is locally univalent and meromorphic, and its Schwarzian derivative
satisfies $S_h= 2A$. Let $g_a(z) = (h \circ \varphi_a)(\psi(a) z)$ for $a,z\in\D$. By the chain rule,
\begin{align*}
|S_{g_a}(z)| & = \big| S_h\big( \varphi_a(\psi(a)z) \big) \big| \, \big| \varphi_a'\big( \psi(a) z\big)\big|^2 \, \psi(a)^2\\
      & \leq \frac{2 \, \nm{A}_{H^\infty_{\omega^2}}}{\omega\big( \varphi_a(\psi(a)z) \big)^2} 
        \, \frac{\big(1-| \varphi_a( \psi(a) z )|^2 \big)^2}{\big(1-|\psi(a)z|^2\big)^2}\, \frac{c^2 \omega(a)^2}{(1-|a|^2)^2},
        \quad a,z\in\D.
\end{align*}
We deduce that $\nm{S_{g_a}}_{H^\infty} \leq \pi^2/2$ for any $a\in\D$, provided
that $0<c<1$ is sufficiently small. Therefore 
$g_a$ is univalent in the unit disc \cite[Theorem~II]{N:1949} for any $a\in\D$.
This is equivalent to the fact that $h$ is univalent in $\Delta_p(a,\psi(a))$
for any $a\in\D$, and therefore $\rho(a,h)\geq \psi(a)$ for  $a\in\D$. 

It remains to estimate $\rho_a(z,h)$. Let $\sigma$ denote the spherical distance on the Riemann sphere.
By the assumption $h^\# \in L^\infty_\omega$, we obtain
\begin{align*}
\sigma\big( h(z), h(\zeta) \big)
        \leq \int_{h(\langle z,\zeta \rangle)} \frac{|d\xi|}{1+|\xi|^2}
       = \int_{\langle z,\zeta \rangle} h^{\#}(\xi) \, |d\xi| 
    \leq \left( \, \sup_{\xi\in \langle z,\zeta \rangle} \frac{1-|\xi|^2}{\omega(\xi)} \right) \, \varrho_h(z,\zeta)
\end{align*}
for any $z,\zeta\in\D$.
If $\zeta\in \Delta_p(z,\psi(z))$, which is a~subset of $\Delta_p(z,1/2)$, then
\begin{equation*}
\sigma\big( h(z), h(\zeta) \big) 
   \leq \left( \, \sup_{\xi\in \langle z,\zeta \rangle} \frac{1-|\xi|^2}{\omega(\xi)} \right)  2 \, \varrho_p(z,\zeta)
 \lesssim \frac{1-|z|^2}{\omega(z)} \cdot \frac{c \, \omega(z)}{1-|z|^2}
\end{equation*}
with an~absolute comparison constant.
Then, $h(z)$ and $h(\zeta)$ cannot be antipodal points if $0<c<1$ is sufficiently small.
Therefore $\rho_a(z,h)\geq \psi(z)$ for $z\in\D$,
which completes the proof of Theorem~\ref{theorem:c2}.
\end{proof}

Corollary~\ref{cor:partialconverse} allows us to reach the desired conclusion $A\in H^\infty_2$
under the assumption $|\nabla u| \in L^\infty_1$. The following lemma shows that,
in this sense, Corollary~\ref{cor:partialconverse} improves \cite[Theorem~7]{G:2018},
according to which the same conclusion holds if the linearly independent solutions $f_1,f_2\in \mathcal{B}$ 
satisfy $\inf_{z\in\D}( |f_1(z)| + |f_2(z)| )>0$.

%%%%%%%%%%%%%%%%%%%%%%
%%%% ---- LEMMA ---- %%%%
%%%%%%%%%%%%%%%%%%%%%%

\begin{lemma} \label{lemma:imp}
The following assertions hold.

\begin{enumerate}
\item[\rm (i)]
If $f_1,f_2\in \mathcal{B}$ are linearly independent solutions 
of \eqref{eq:de2} for $A\in\H(\D)$, and $\inf_{z\in\D}( |f_1(z)| + |f_2(z)| )>0$, 
then $|\nabla u| \in L^\infty_1$ for $u=-\log \, (f_1/f_2)^\#$.

\item[\rm (ii)]
There exists $A\in H^\infty_2$ such that \eqref{eq:de2} admits
linearly independent solutions $f_1,f_2$ such that
$\inf_{z\in\D}( |f_1(z)| + |f_2(z)| )=0$ 
but $|\nabla u| \in L^\infty_1$.

\item[\rm (iii)]
There exists $A\in \H(\D)$ such that \eqref{eq:de2} admits
linearly independent solutions $f_1,f_2$ with
$f_1/f_2$ bounded (and hence normal) but $|\nabla u| \not\in L^\infty_1$.

\end{enumerate}
\end{lemma}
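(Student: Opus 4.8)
The plan is to handle the three items separately, using throughout the identity $u = -\log\,(f_1/f_2)^\# = \log(|f_1|^2+|f_2|^2) - \log|W(f_1,f_2)|$ and the representation $\partial u = (f_1'\overline{f_1}+f_2'\overline{f_2})/(|f_1|^2+|f_2|^2)$ from Theorem~\ref{thm:repres}. Since $|\nabla u| = 2|\partial u|$, controlling $|\nabla u|(1-|z|^2)$ amounts to controlling $|\partial u|(1-|z|^2)$.

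For item (i), I would argue directly from the Cauchy--Schwarz bound
\[
|\partial u| \le \frac{(|f_1'|+|f_2'|)(|f_1|+|f_2|)}{|f_1|^2+|f_2|^2} \le \sqrt{2}\,\frac{|f_1'|+|f_2'|}{|f_1|+|f_2|}.
\]
Since $f_1,f_2\in\mathcal{B}$ we have $|f_j'(z)|(1-|z|^2)\le \nm{f_j}_{\mathcal B}$, and since $\inf_{z\in\D}(|f_1(z)|+|f_2(z)|)=:m>0$, we get $|\partial u(z)|(1-|z|^2)\le \sqrt{2}(\nm{f_1}_{\mathcal B}+\nm{f_2}_{\mathcal B})/m$, i.e.\ $|\nabla u|\in L^\infty_1$. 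This is essentially the computation already used to derive \eqref{eq:preli} and the Bloch estimates.

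For item (ii), the natural candidate is the explicit pair from Example~\ref{ex:cases}(iii): $f_1(z)=(1-z^2)^{1/2}$, $f_2(z)=(1-z^2)^{1/2}\log\frac{1+z}{1-z}$, with $A(z)=1/(1-z^2)^2\in H^\infty_2$. Here $f_1/f_2 = 1/\log\frac{1+z}{1-z}$, whose spherical derivative is easily estimated: $(f_1/f_2)' = \frac{-2}{(1-z^2)(\log\frac{1+z}{1-z})^2}$, and one checks $(f_1/f_2)^\#(z)(1-|z|^2)$ is bounded — the logarithmic factor in the denominator kills the blow-up of $1/(1-z^2)$ away from $z=\pm1$, while near $z=\pm1$ the factor $(1+|f_1/f_2|^2)$ in the spherical derivative provides the decay. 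Then by Corollary~\ref{cor:partialconverse} (or directly Corollary~\ref{cor:c2} with $\omega(z)=1-|z|^2$), since $A\in H^\infty_2$ and $f_1/f_2$ is normal, we conclude $|\nabla u|\in L^\infty_1$. On the other hand both $f_1$ and $f_2$ have radial limit $0$ along the positive real axis (indeed $f_1(r)\to 0$ as $r\to1^-$), so $\inf_{z\in\D}(|f_1(z)|+|f_2(z)|)=0$. The one point requiring care is verifying normality of $f_1/f_2$ uniformly up to the boundary; I would do this by a direct estimate of $(f_1/f_2)^\#(1-|z|^2)$, splitting into a neighborhood of $\{\pm1\}$ and its complement.

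For item (iii), I need a bounded (hence normal) quotient $f_1/f_2$ for which $|\nabla u|$ fails to be in $L^\infty_1$. The cleanest route is to invoke Theorem~\ref{thm:steinmetz} together with the discussion following Theorem~\ref{thm:uchiyama}: there one produces $f_1,f_2\in H^\infty$ with $f_1/f_2$ non-normal, yet by the remark before Theorem~\ref{thm:uchiyama} one also has explicit instances (cf.\ the reference \cite{L:1973} cited at the end of Section~\ref{sec:pw}) where $f_1/f_2$ is \emph{bounded} while non-normal. Actually the slicker choice is $f_1/f_2$ bounded and non-normal: by Corollary~\ref{cor:partialconverse}, if $|\nabla u|\in L^\infty_1$ then $f_1/f_2$ would be normal, so non-normality of a bounded quotient forces $|\nabla u|\notin L^\infty_1$. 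Thus it suffices to exhibit one differential equation \eqref{eq:de2} with $A\in\H(\D)$ admitting linearly independent solutions whose quotient is bounded and non-normal — and this is precisely what the construction cited in Section~\ref{sec:pw} (the example attributed to \cite{L:1973}, or Theorem~\ref{thm:steinmetz}) provides. The expected main obstacle is pinning down a clean self-contained bounded non-normal example for (iii); if one does not wish to rely on \cite{L:1973} verbatim, one can instead take $f_1=f_2 w$ where $w$ is a bounded locally univalent non-normal function with analytic Schwarzian (for instance a suitable conformal map onto a domain with a "spiraling" boundary approach), set $A=S_w/2$, and recover $f_1,f_2$ from $w$; verifying $S_w\in\H(\D)$ and non-normality of $w$ is then the substance of the argument, after which Corollary~\ref{cor:partialconverse} finishes it.
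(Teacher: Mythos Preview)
Your argument for (i) is essentially identical to the paper's: both use the formula $|\nabla u|=2|\partial u|$ together with the Bloch bound on $f_j'$ and the positive lower bound on $|f_1|+|f_2|$.

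For (ii) you choose the explicit pair from Example~\ref{ex:cases}(iii), whereas the paper takes the univalent map $h(z)=-\log(1-z)$, sets $A=S_h/2=4^{-1}(1-z)^{-2}$, and recovers $f_1,f_2$ from $h=f_1/f_2$. Both routes conclude via Corollary~\ref{cor:partialconverse}. Your example works, but your proposed direct estimate of $(f_1/f_2)^\#(1-|z|^2)$ is unnecessary labor: $f_2/f_1=\log\frac{1+z}{1-z}$ is univalent (it maps $\D$ onto a horizontal strip), hence normal, and $w^\#=(1/w)^\#$ gives normality of $f_1/f_2$ at once. The paper's choice has the same one-line justification.

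Item (iii) has a genuine gap. You reduce to finding linearly independent solutions with $f_1/f_2$ \emph{bounded and non-normal}, and then appeal to Theorem~\ref{thm:steinmetz} and \cite{L:1973}. Neither reference delivers this: Theorem~\ref{thm:steinmetz} produces $f_1,f_2\in H^\infty$ with $f_1/f_2$ non-normal, but there is no control forcing $f_1/f_2$ itself to be bounded (indeed $|f_2(z_n)|\asymp 1-|z_n|^2$ in that construction), and Lappan's example in \cite{L:1973} is a non-normal locally uniformly univalent function, not a bounded one. Your fallback of manufacturing a bounded locally univalent non-normal $w$ with analytic Schwarzian is a real construction problem, not a citation.

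The paper sidesteps this entirely by using the \emph{other} half of Corollary~\ref{cor:partialconverse}. It invokes Theorem~\ref{thm:converse}(ii): there $A\notin H^\infty_2$, so $|\nabla u|\in L^\infty_1$ is impossible regardless of normality; and the bounded quotient is immediate since $f_2/f_1=\int_0^z f_1(\zeta)^{-2}\,d\zeta$ is bounded by construction (then relabel). The point is that you do not need a bounded non-normal quotient at all---failure of $A\in H^\infty_2$ already forces $|\nabla u|\notin L^\infty_1$.
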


%%%%%%%%%%%%%%%%%%%%%%
%%%% ---- PROOF ---- %%%%
%%%%%%%%%%%%%%%%%%%%%%

\begin{proof}
(i) Since $f_1,f_2\in\mathcal{B}$ satisfy $\inf_{z\in\D}( |f_1(z)| + |f_2(z)| )>0$, we deduce
\begin{equation*}
\begin{split}
  |\nabla u(z)| & = 2 \left| \partial u(z) \right| 
   = 2 \, \frac{\big| f_1'(z)\overline{f_1(z)}+f_2'(z) \overline{f_2(z)} \big|}{|f_1(z)|^2+|f_2(z)|^2} \\
  & \leq \frac{2 \max\{ \nm{f_1}_{\mathcal{B}},  \nm{f_2}_{\mathcal{B}}\}}{1-|z|^2} \, \frac{|f_1(z)|+|f_2(z)|}{|f_1(z)|^2+|f_2(z)|^2}
\lesssim \frac{1}{1-|z|^2}, \quad z\in\D.
\end{split}
\end{equation*}

(ii) Consider the analytic and univalent function $h(z)=-\log(1-z)$, $z\in\D$.
Define $A=S_h/2$, where $S_h$ is the Schwarzian derivative of $h$.
Then, $A(z)=4^{-1}(1-z)^{-2}$, $z\in\D$, and clearly $A\in H^\infty_2$.
It is well-known that \eqref{eq:de2}
admits two linearly independent solutions $f_1,f_2$ such that $h=f_1/f_2$.
In this case
\begin{equation*}
 \frac{|W(f_1,f_2)|}{|f_1(z)|^2+|f_2(z)|^2} = h^{\#}(z) = \frac{1}{|1-z| \big( 1 + | \log(1-z)|^2 \big)}, \quad z\in\D,
\end{equation*}
is unbounded in $\D$, while $|\nabla u| \in L^\infty_1$ by Corollary~\ref{cor:partialconverse} ($h$ is normal as it is univalent).

Part (iii) follows by the proof of Theorem~\ref{thm:converse}(ii). An~application of
Corollary~\ref{cor:partialconverse} reveals that $|\nabla u| \not\in L^\infty_1$.
\end{proof}

It is a~natural question to ask how $|\nabla u |\in L^\infty_1$ compares
to Theorem~\ref{thm:converse_final}? On one hand,
Lemma~\ref{lemma:imp}(ii) serves as an~example where $|\nabla u| \in L^\infty_1$ but
\eqref{eq:exc_ind} fails for any pairwise disjoint pseudo-hyperbolic discs (consider the positive real axis). 
On the other hand,
Example~\ref{ex:cases}(ii) in Section~\ref{sec:6} implies that there are cases in which
\eqref{eq:exc_ind} is satisfied but $|\nabla u |\not\in L^\infty_1$ ($f_1/f_2$ is not normal). In both of these
examples, the coefficient function satisfies $A\in H^\infty_2$.

%%%%%%%%%%%%%%%%%%%%%%
%%%% ---- PROOF ---- %%%%
%%%%%%%%%%%%%%%%%%%%%%

\begin{proof}[Proof of Corollary~\ref{cor:c2}]
The assertions (i) and (ii) are equivalent by Theorem~\ref{theorem:c2}. Note that
(i) implies (iii) by Lemma~\ref{lemma:derprop}, while (iii) implies (i), and also (ii), by Theorem~\ref{thm:repres}(ii).
Finally, (ii) is equivalent to (iv) according to Theorem~\ref{thm:repres}(i).
\end{proof}

The arguments in this section are build on the representation \eqref{eq:preli} for the coefficient $A$.
Derivatives of the coefficient can be controlled by expressions of similar type. For example,
by differentiating \eqref{eq:de2} we obtain $f'''+A'f +Af'=0$, and 
\begin{align*}
|A| & = \frac{|f_1'|+|f_2'|}{|f_1'|+|f_2'|} \, |A| = \frac{|f_1'''+A'f_1| + |f_2'''+A'f_2|}{|f_1'|+|f_2'|}
\geq |A'| \, \frac{|f_1|+|f_2|}{|f_1'|+|f_2'|} - \frac{|f_1'''|+|f_2'''|}{|f_1'|+|f_2'|}.
\end{align*}
%\end{remark}
Therefore, by applying \eqref{eq:preli},
\begin{equation*}
|A'| \leq \frac{|f_1'|+|f_2'|}{|f_1|+|f_2|} \left( |A| + \frac{|f_1'''|+|f_2'''|}{|f_1'|+|f_2'|} \right)
 = \frac{|f_1'|+|f_2'|}{|f_1|+|f_2|} \cdot \frac{|f_1''|+|f_2''|}{|f_1|+|f_2|} +  \frac{|f_1'''|+|f_2'''|}{|f_1|+|f_2|}. 
\end{equation*}

%%%%%%%%%%%%%%%%%%%%%%%
%%%% ---- SECTION ---- %%%%
%%%%%%%%%%%%%%%%%%%%%%%

\section{Proof of Theorem~\ref{thm:fixedpoints}}

It is natural to require that solution with prescribed fixed points is bounded in~$\D$.
Under this requirement, Theorem~\ref{thm:fixedpoints} is best possible. This is a~consequence of
properties, which are restated in Lemma~\ref{lemma:oo} for convenience.

%%%%%%%%%%%%%%%%%%%%%%
%%%% ---- PROOF ---- %%%%
%%%%%%%%%%%%%%%%%%%%%%

\begin{proof}[Proof of Theorem~\ref{thm:fixedpoints}]
Let $B=B_{\{z_n\}}$ be the Blaschke product with zeros $\{z_n\}$.
Let $0<\varepsilon<1$, and define $f_1(z) = z + \varepsilon z^3 B(z)$, $z\in\D$.
The fixed points of $f_1$ are precisely $\{0\} \cup \{z_n\}$.
By the Schwarz lemma $| z^3\,  B(z) | \leq |z|$ for $z\in\D$, 
and therefore $(1-\varepsilon) |z|\leq |f_1(z)| \leq (1+\varepsilon)|z|$ for any $z\in\D$.

Since $f_1$ has only one zero in $\D$ and $f_1''(0)=0$, 
we deduce $A=-f_1''/f_1 \in \H(\D)$. If $0<\delta<1$, then
\begin{align*}
\sup_{\delta<|z|<1} |A(z)| %& \leq \frac{1}{(1-\varepsilon)\delta} \, \sup_{\delta<|z|<1} |f_1''(z)|\\
\leq \frac{\varepsilon}{(1-\varepsilon)\delta} \, \sup_{\delta<|z|<1} \Big( |B''(z)| + 6 \, |B'(z)| + 6 \, |B(z)| \Big),
\end{align*}
and consequently, $|A(z)|^2(1-|z|^2)^3\, dm(z)$ is a~Carleson measure.
If $f_2$ is defined by \eqref{eq:solrep} for fixed $\alpha\in\D\setminus\{0\}$, then $f_2\in H^\infty$ 
is a~solution of \eqref{eq:de2} and is linearly independent to $f_1$.
Consequently, all solutions of \eqref{eq:de2} are bounded.
\end{proof}

%%%%%%%%%%%%%%%%%%%%%%
%%%% ---- LEMMA ---- %%%%
%%%%%%%%%%%%%%%%%%%%%%

\begin{lemma} \label{lemma:oo}
The following assertions hold.
\begin{enumerate}
\item[\rm (i)]
The identity function is the only one
in $\{ f\in H^\infty : \nm{f}_{H^\infty} \leq 1\}$ which has more
than one fixed point. 

\item[\rm (ii)]
The identity function is the only
one in $\NC$ which has
more fixed points than the Blaschke condition allows.
\end{enumerate}
\end{lemma}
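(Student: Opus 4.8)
The plan is to treat the two parts separately; both follow from classical rigidity, and since the identity function lies in each class (it has $H^\infty$-norm $1$, hence is bounded and lies in $\NC$) and fixes every point of $\D$, it suffices to rule out all other candidates. For (i), let $f$ lie in the closed unit ball of $H^\infty$ and have two distinct fixed points $z_1,z_2\in\D$. First I would dispose of the constant case: a constant $c$ has at most the single fixed point $z=c$, so $f$ is non-constant, and the maximum modulus principle then forces $|f(z)|<1$ on $\D$, so $f$ is an analytic self-map of $\D$. The key step is the equality case of the Schwarz--Pick lemma: since $\varrho_h(f(z_1),f(z_2))=\varrho_h(z_1,z_2)$ with $z_1\neq z_2$, $f$ must be a conformal automorphism of $\D$. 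To conclude, conjugate by the involution $\varphi_{z_1}$: the automorphism $g=\varphi_{z_1}\circ f\circ\varphi_{z_1}$ fixes $0$, hence is a rotation $g(z)=e^{i\alpha}z$, and it also fixes $\varphi_{z_1}(z_2)\neq 0$, forcing $e^{i\alpha}=1$; thus $g$, and therefore $f$, is the identity.

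For (ii), let $f\in\NC$ and suppose the set $F=\{z\in\D:f(z)=z\}$ is not a Blaschke sequence. If $F$ has an accumulation point in $\D$, the identity theorem applied to $f(z)-z$ already gives $f(z)\equiv z$. Otherwise $F$ is a discrete sequence $\{z_n\}\subset\D$ with $\sum_n(1-|z_n|)=\infty$, and here I would use that $\NC$ is closed under subtraction and contains $z\mapsto z$ (in the holomorphic case one can instead observe directly that $\log^+|f(z)-z|\leq\log 2+\log^+|f(z)|$ inherits a harmonic majorant); hence $g(z)=f(z)-z$ belongs to $\NC$. If $g\not\equiv 0$, then its zero-sequence, which contains $\{z_n\}$, satisfies the Blaschke condition, a contradiction. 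So $g\equiv 0$, i.e.\ $f$ is the identity.

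I do not expect a genuine obstacle, since everything rests on standard facts: the equality case of Schwarz--Pick, the fact that a disc automorphism fixing $0$ and a further nonzero point is the identity, and the Blaschke condition for zeros of a nonzero function of bounded characteristic. The only points needing a line of care are the reduction to the non-constant case in (i) and, in (ii), phrasing the statement so that the meromorphic members of $\NC$ are also covered; in that generality $f(z)-z$ is still of bounded characteristic and its zeros still form a Blaschke sequence, so the conclusion is unaffected.
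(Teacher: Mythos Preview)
Your proof is correct and follows essentially the same route as the paper's: for (i) the equality case of Schwarz--Pick followed by conjugation with $\varphi_{z_1}$ to reduce to a rotation fixing a nonzero point, and for (ii) the observation that $f(z)-z\in\NC$ together with the Blaschke condition on zeros of a nonzero function of bounded characteristic. Your version is in fact slightly more careful than the paper's, which does not explicitly dispose of the constant case in (i), does not separate out the accumulation-point scenario in (ii), and does not comment on the meromorphic members of $\NC$.
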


%%%%%%%%%%%%%%%%%%%%%%
%%%% ---- PROOF ---- %%%%
%%%%%%%%%%%%%%%%%%%%%%

\begin{proof}
(i) Suppose that $f\in H^\infty$, $\nm{f}_{H^\infty} \leq 1$ and $f$ has two distinct fixed points 
$\alpha,\beta\in\D$. If one of the fixed points is zero, then $f(z)\equiv z$ by
the classical Schwarz lemma \cite[Lemma~1.1, p.~1]{G:2007}. Otherwise, use the property
$\varrho_p(f(\alpha),f(\beta)) = \varrho_p(\alpha,\beta)$ and 
the Schwarz-Pick lemma \cite[Lemma~1.2, p.~2]{G:2007} to conclude that
$f$ is a~M\"obius transformation. Define $g = \varphi_\alpha \circ f \circ \varphi_\alpha$,
and note that $g$ is a~M\"obius transformation which fixes the values 
$0$ and $\varphi_\alpha(\beta)\neq 0$, and hence $g(z)\equiv z$ by the classical Schwarz lemma.
Therefore $f$ is the identity function.

(ii) Suppose that $f$ is bounded in $\D$ and 
its fixed points do not satisfy the Blaschke condition. Now $f(z)-z$ belongs to~$\NC$ and
has more zeros than the Blaschke condition allows. The claim follows.
\end{proof}

%%%%%%%%%%%%%%%%%%%%%%%
%%%% ---- SECTION ---- %%%%
%%%%%%%%%%%%%%%%%%%%%%%

\section{Proofs of Theorems~\ref{thm:fixedpoints2} and \ref{thm:fixedpoints2N}}

%%%%%%%%%%%%%%%%%%%%%%
%%%% ---- PROOF ---- %%%%
%%%%%%%%%%%%%%%%%%%%%%

\begin{proof}[Proof of Theorem~\ref{thm:fixedpoints2}]
Let $\Lambda \subset \D\setminus\{0\}$ be a~uniformly separated sequence.
Then, the corresponding Blaschke product $B= B_\Lambda$ satisfies
\eqref{eq:sep}. 

Let $h\in H^\infty$ be a~function which satisfies $h(z_n)=\log{z_n}$ for $z_n\in\Lambda$.
The existence of such $h$ is guaranteed by Carleson's interpolation theorem
\cite[Theorem~3]{C:1962}. Let $\{C_n\}$ be the sequence of real numbers defined as follows:
Whenever $z_n\in\Lambda$ is prescribed to be an~attractive fixed point define $C_n=1/2$,
if neutral choose $C_n=1$, while otherwise take $C_n=2$.
By \eqref{eq:sep}, we obtain
\begin{equation*}
\sup_{z_n\in\Lambda} \, \left| \frac{1}{B'(z_n)} \left( \frac{C_n}{z_n} - h'(z_n) \right) \right|
\leq \sup_{z_n\in\Lambda} \, \frac{1-|z_n|^2}{\delta} \left( \frac{2}{\inf_n |z_n|} + |h'(z_n)| \right) < \infty,
\end{equation*}
and hence $\{w_n\}= \{(C_n/z_n - h'(z_n))/B'(z_n)\}$ is a bounded sequence. 
The aforementioned Carleson's result guarantees that there exists $g\in H^\infty$
with $g(z_n)=w_n$ for $z_n\in\Lambda$.
Define $f_1=\exp( h+Bg)$, and note that $f_1$ is not only in $H^\infty$ but also is uniformly 
bounded away from zero. Moreover, 
\begin{equation*}
f_1(z_n)=z_n, \quad f_1'(z_n) = z_n \Big( h'(z_n)+B'(z_n) g(z_n) \Big) = C_n, \quad z_n\in\Lambda.
\end{equation*}
The points $z_n\in\Lambda$ are fixed points of the prescribed type. 
The coefficient $A= -f_1''/f_1 \in\H(\D)$ satisfies $|A|\lesssim |f_1''|$ in $\D$, and therefore
$|A(z)|^2(1-|z|^2)^3\, dm(z)$ is a~Carleson measure.
The fact that all solutions of \eqref{eq:de2} are bounded follows as in the proof of Theorem~\ref{thm:fixedpoints}.
\end{proof}

Note that the solution $f_1$ in Theorem~\ref{thm:fixedpoints2}, which has prescribed fixed points
of pregiven type,  may have fixed points which do not belong to $\Lambda$.

%%%%%%%%%%%%%%%%%%%%%%%
%%%% ---- REMARK ---- %%%%
%%%%%%%%%%%%%%%%%%%%%%%

\begin{remark}
If $A\in\H(\D)$ and $z_0\in\D$, then \eqref{eq:de2} admits a~unique
solution~$f$ such that the initial conditions $f(z_0)=\alpha\in\C$ and $f'(z_0)=\beta\in\C$ are satisfied.
Therefore  fixed points of solutions of \eqref{eq:de2} are not always distinct from zeros or critical points.
In the proof of Theorem~\ref{thm:fixedpoints2}, $\{C_n\}\subset\C$ can be any sequence 
with the property $\sup_n |C_n|(1-|z_n|^2)<\infty$. If we take $C_n=0$ for all 
$n$, then every point $z_n\in\Lambda$ is not only a~fixed point but also a~critical point of the solution $f_1$.
\end{remark}

%%%%%%%%%%%%%%%%%%%%%%
%%%% ---- PROOF ---- %%%%
%%%%%%%%%%%%%%%%%%%%%%

\begin{proof}[Proof of Theorem~\ref{thm:fixedpoints2N}]
Let $\Lambda\in\Int\NC$ be the sequence of non-zero points, and let $B=B_\Lambda$
be the corresponding Blaschke product. Since $\Lambda\in\Int\NC$, \cite[Theorem~1.2]{HMNT:2004} implies that
there exists $h_1\in\hp(\D)$ such that \eqref{eq:nevii} holds.

Let $h\in \NC$ be a~function which satisfies $h(z_n)=\log{z_n}$ for $z_n\in\Lambda$.
Since $\Lambda$ is Nevanlinna interpolating, 
the existence of such function $h$ is guaranteed by 
\cite[Theorem~1.2]{HMNT:2004}. Let $\{C_n\}$ be the sequence of real numbers defined 
as in the proof of Theorem~\ref{thm:fixedpoints2}.
As $h\in\NC$, \eqref{eq:nevder} implies that there exists a~constant $0<C<\infty$ and $h_2\in\hp(\D)$ such that
\begin{equation*}
\left| \frac{1}{B'(z_n)} \left( \frac{C_n}{z_n} - h'(z_n) \right) \right|
\leq \frac{1-|z_n|^2}{e^{-h_1(z_n)}} \left( \frac{2}{\inf_n |z_n|} + \frac{e^{C}e^{h_2(z_n)}}{1-|z_n|^2} \right), \quad z_n\in\Lambda.
\end{equation*}
Since $\{w_n\}= \{(C_n/z_n - h'(z_n))/B'(z_n)\} \in \NC \, | \, \Lambda$ by
\cite[Theorem~1.2]{HMNT:2004}, there exists $g\in \NC$
with $g(z_n)=w_n$ for $z_n\in\Lambda$. Define $f=\exp( h+Bg)$, and note that
\begin{equation*}
f(z_n)=z_n, \quad f'(z_n) = z_n \Big( h'(z_n)+B'(z_n) g(z_n) \Big) = C_n, \quad z_n\in\Lambda.
\end{equation*}
The points $z_n\in\Lambda$ are fixed points of the prescribed type. 
Finally, the coefficient $$A= -f''/f = - \big( (h+Bg)' \big)^2 - (h+Bg)''\in\H(\D)$$ satisfies
$|A(z)|(1-|z|^2)^2 \leq e^{H(z)}$, $z\in\D$ and $H\in \hp(\D)$,
by~\eqref{eq:nevder}.
\end{proof}

\smallskip

%%%%%%%%%%%%%%%%%%%%%%%
%%%% ---- SECTION ---- %%%%
%%%%%%%%%%%%%%%%%%%%%%%

\section*{Acknowledgements} 

The author thanks Artur Nicolau for helpful conversations and guidance, especially
in relation to Nevanlinna interpolating sequences.
The author gratefully acknowledges the hospitality of Departament de Matem\`atiques, 
Universitat Aut\`onoma de Barcelona.

%%%%%%%%%%%%%%%%%%%%%%%%%%%%%%%%
%%%% ---- BIBLIOGRAPHY ---- %%%%
%%%%%%%%%%%%%%%%%%%%%%%%%%%%%%%%

\end{document}